\documentclass[11pt]{amsart}
\usepackage[margin=1.15in]{geometry}
\usepackage{amsmath,amssymb,amsthm,mathtools}
\usepackage{mathrsfs}
\usepackage{enumitem}
\usepackage{needspace}
\usepackage[expansion=false]{microtype}
\usepackage{xcolor}
\usepackage[colorlinks=true,linkcolor=blue!60!black,citecolor=blue!60!black,urlcolor=blue!60!black]{hyperref}
\usepackage{comment}

\setlist[enumerate]{leftmargin=*,itemsep=3pt,topsep=4pt}
\numberwithin{equation}{section}
\newtheorem{theorem}{Theorem}[section]
\newtheorem{lemma}[theorem]{Lemma}
\newtheorem{proposition}[theorem]{Proposition}
\newtheorem{corollary}[theorem]{Corollary}
\theoremstyle{definition}
\newtheorem{definition}[theorem]{Definition}
\newtheorem{example}[theorem]{Example}

\theoremstyle{remark}
\newtheorem{remark}[theorem]{Remark}
\theoremstyle{definition}
\newtheorem*{AC}{Acknowledgement}
\newtheorem*{AS}{AI statements}
\newcommand{\Dmod}{\mathcal D}
\newcommand{\Amod}{\mathcal A}
\newcommand{\Cring}{\mathbb C}
\newcommand{\Mmod}{\mathcal M}
\newcommand{\svect}{\mathbf s}
\newcommand{\tvect}{\mathbf t}
\newcommand{\one}{\mathbf 1}
\newcommand{\Chrel}{\operatorname{Ch}^{\mathrm{rel}}}
\newcommand{\CCrel}{\operatorname{CC}^{\mathrm{rel}}}
\newcommand{\Ann}{\operatorname{Ann}}
\newcommand{\Supp}{\operatorname{Supp}}
\newcommand{\Spec}{\operatorname{Spec}}
\newcommand{\CC}{\operatorname{CC}}
\newcommand{\gr}{\operatorname{gr}}
\newcommand{\Div}{\operatorname{Div}}
\newcommand{\Exp}{\operatorname{Exp}}

\title[Bounds for Bernstein--Sato zero loci]{Bounds for codimension-one components of zero loci of Bernstein--Sato ideals}

\author{Wenzong Guo}
\address{Wenzong Guo, School of Mathematical Sciences, Zhejiang University, Hangzhou, China}
\email{wenzongguo@zju.edu.cn}
\author{Fanghan Xiang}
\address{Fanghan Xiang, School of Mathematical Sciences, Zhejiang University, Hangzhou, China}
\email{fanghanxiang@zju.edu.cn}

\begin{document}
\begin{abstract}
Let $X$ be a smooth complex affine variety of dimension $n$, and let $F=(f_1,\ldots,f_r)$ be a tuple of nonzero regular functions on $X$ such that their product $f:=\prod_{i=1}^r f_i$ is not invertible. We study the zero loci of the Bernstein--Sato ideals $B_F^{\mathbf a}$ for nonnegative integral shifts $\mathbf a$. For a fixed log resolution, every codimension-one irreducible component of $Z(B_F^{\mathbf{a}})$ is a hyperplane of the form $L_E(\svect)+k_E+c=0$ with $c$ a positive integer. We give a new proof of this theorem using localized maximal and minimal extensions of relative $\Dmod$-modules. We also prove that
$c\leq L_E(\mathbf a)+(n-1-\delta_f)L_E(\one)-k_E$,
where $\delta_f=\min\{n-1,\alpha_f\}$ and $\alpha_f$ is the minimal exponent of $f$. The problem of obtaining such an upper bound for arbitrary tuples (in particular, for $r>1$) was raised in \cite[\S1]{BudurVanDerVeerVanWerde2024}, and the displayed inequality resolves it. To obtain the stated upper bound, we compare the diagonal slice of \(Z(B_F^{\mathbf 1})\) with the root set of \(b_f\). The finite covering of $Z(B_F^{\mathbf a})$ by translates of its codimension-one part along vectors $-k\mathbf a$, combined with diagonal specialization and the log-resolution description, shows that these two sets have the same least and greatest points. In particular, applying Saito’s root estimate to their common least point then yields the desired upper bound. We further establish a divisor-valued formulation of the local index comparison, recovering both the detection of monodromy support via monodromy zeta functions and the multivariable A’Campo formula.
\end{abstract}
\maketitle

\setcounter{tocdepth}{1}
\tableofcontents

\section{Introduction}

\subsection{Bernstein--Sato ideals and log resolutions}
Let $X$ be a smooth complex affine algebraic variety of dimension $n\geq1$, and let $F=(f_1,\ldots,f_r)$ be a tuple of nonzero regular functions on $X$. We write
\[
R=\Cring[\svect]=\Cring[s_1,\ldots,s_r],\qquad
\Dmod_X[\svect]=\Dmod_X\otimes_{\Cring}R,\qquad
F^{\svect}=\prod_{i=1}^r f_i^{s_i},
\]
where $\Dmod_X$ is the sheaf of algebraic differential operators on $X$. Put $f=\prod_i f_i$ and $D=(f=0)$, and assume that $f$ is not invertible. For $\mathbf a\in\mathbb Z_{\geq0}^r\setminus\{0\}$, the \emph{Bernstein--Sato ideal} of $F$ with shift $\mathbf a$ is
\[
B_F^{\mathbf a}
=\Ann_R\left(\frac{\Dmod_X[\svect]F^{\svect}}
                         {\Dmod_X[\svect]F^{\svect+\mathbf a}}\right).
\]
Equivalently, it consists of the polynomials $b(\svect)\in R$ satisfying
\[
b(\svect)F^{\svect}\in\Dmod_X[\svect]F^{\svect+\mathbf a}.
\]
When $r=1$ and $a=1$, we recover the one-variable Bernstein–Sato ideal, which is principal and generated by the usual Bernstein–Sato polynomial $b_f(s)$.

\begin{theorem}[{\cite[Theorem 1.1.1]{BvdVWZ21b}}]
\label{thmbvdwz}
Let $F=(f_1,\ldots,f_r):X\to\mathbb C^r$ be a morphism
from a smooth complex affine irreducible algebraic variety $X$,
or the germ at $x\in X$ of a holomorphic map on a complex manifold.
Let $\mathbf a=(a_1,\ldots,a_r)\in\mathbb Z_{\geq0}^r$ be such that
$\prod_{j=1}^r f_j^{a_j}$ is not invertible as a holomorphic
function on $X$. Then the following hold.
\begin{enumerate}
    \item Every irreducible component of $Z(B_F^{\mathbf a})$
    of codimension one is a hyperplane of the form
    \[
        l_1s_1+\cdots+l_rs_r+b=0,
    \]
    where $l_j\in\mathbb Q_{\geq0}$ and $b\in\mathbb Q_{>0}$.
    For each such hyperplane, there exists an index $j$
    with $a_j\neq0$ such that $l_j>0$.

    \item Every irreducible component of $Z(B_F^{\mathbf a})$
    of codimension greater than one can be translated
    by an element of $\mathbb Z^r$ into a codimension-one
    irreducible component of $Z(B_F^{\mathbf a})$.
\end{enumerate}
\end{theorem}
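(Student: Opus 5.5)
To prove Theorem~\ref{thmbvdwz} I would relate $Z(B_F^{\mathbf a})$ to the support of the relative $\Dmod$-module
\[
\Mmod=\Dmod_X[\svect]F^{\svect}\big/\Dmod_X[\svect]F^{\svect+\mathbf a}
\]
over $R$, and then compute that support with a log resolution $\mu\colon Y\to X$ of $D$. Let the exceptional and strict-transform divisors $E$ carry multiplicities $\operatorname{ord}_E(f_i)$, write $L_E(\svect)=\sum_i \operatorname{ord}_E(f_i)s_i$, and let $k_E$ be the discrepancy.

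\textbf{Relative holonomicity and purity.} By the theory of relative holonomic modules (Maisonobe, Budur--van der Veer--Wu--Zhou), $\Dmod_X[\svect]F^{\svect}$ is relative holonomic. So is the quotient $\Mmod$, and $Z(B_F^{\mathbf a})=\Supp_R\Mmod$. The key structural input is that the relative characteristic cycle of $\Mmod$ has components of the form $\Lambda\times Z_\Lambda$. For the codimension-one part, one shows that $\Mmod$ is $R$-torsion with support whose divisorial part is detected by the torsion of $\Dmod_X[\svect]F^{\svect}$ localized along $D$.

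\textbf{Codimension-one components are hyperplanes.} I would pull back to $Y$, where $F\circ\mu$ is locally monomial, $\prod_E y_E^{L_E(\svect)}$ times a unit. For such a monomial tuple the Bernstein--Sato ideal is explicit: it is a product of factors $(L_E(\svect)+k_E+c)$ with $c$ a positive integer. Direct image then gives $b(\svect)\cdot\mu_+(\ldots)$, hence an inclusion of $Z(B_F^{\mathbf a})$ in a union of translates of hyperplanes $L_E(\svect)+k_E+c=0$. The shift $c\ge 1$ comes from the twist by $\omega_{Y/X}$.

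Positivity then reads as follows. We have $l_j=\operatorname{ord}_E(f_j)\ge 0$ and $b=k_E+c>0$, after dividing by a positive constant. The requirement that some $l_j>0$ with $a_j\ne 0$ comes from the fact that if $E$ lies in no divisor of an $f_j$ with $a_j\ne0$, then $F^{\svect+\mathbf a}$ and $F^{\svect}$ generate the same module generically along $E$. Any codimension-one component is an irreducible hypersurface inside this finite union of hyperplanes, so it equals one of them.

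\textbf{Higher codimension.} For part (2) I would use the finite-covering principle. $Z(B_F^{\mathbf a})$ is contained in $\bigcup_{k\ge0}(Z_1-k\mathbf a)$, a finite union where $Z_1$ is the codimension-one part. This follows from writing $B_F^{\mathbf a}\supseteq\prod_k B_{1}(\svect+k\mathbf a)$-type factorizations from iterating $F^{\svect}\to F^{\svect+\mathbf a}$. The sharper Sabbah/Maisonobe statement is that the support of the torsion of $\Dmod[\svect]F^{\svect}/\Dmod[\svect]F^{\svect+\mathbf a}$ has all components contained in $\mathbb Z^r$-translates of the codimension-one part of the same ideal, via a duality argument (Maisonobe's result that $\Mmod$ is Cohen--Macaulay-like over $R$ in codimension).

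\textbf{Main obstacle.} The difficult step is part (2). Showing that an arbitrary lower-dimensional component, rather than merely being contained in a translate of some hyperplane, is a translate of a component of $Z(B_F^{\mathbf a})$ itself needs the relative duality $\mathbb D$ on relative holonomic modules and the comparison of the supports of $\mathbb D\Mmod$ and $\Mmod$ up to integer shifts. I would first try to use the specialization $\svect\mapsto\svect+\mathbf m$ together with semicontinuity of the support, and otherwise fall back on citing Maisonobe's theorem.
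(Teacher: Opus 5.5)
\textbf{Overall.} The most serious gap is in part (2). Part (1) also has an unproved descent step and an unproved clause.

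\textbf{Part (2).} The paper does not reprove this cited theorem verbatim. It does prove sharper forms of both parts, Theorem~\ref{Theorem:1.1} and Proposition~\ref{prop:positive-translation}, and your target covering $Z(B_F^{\mathbf a})\subseteq\bigcup_{k=0}^{q}(W^{(1)}_{\mathbf a}-k\mathbf a)$ is exactly the latter.

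The covering does not follow from iterating functional equations. Iteration only gives $\prod_{k=0}^{m-1}\tau_{k\mathbf a}(B_F^{\mathbf a})\subseteq B_F^{m\mathbf a}$. That bounds the zero locus of a \emph{larger} shift by translates of \emph{all} of $Z(B_F^{\mathbf a})$, higher-codimension components included, which is the wrong direction. There is no a priori ideal $B_1$ with $Z(B_1)=W^{(1)}_{\mathbf a}$ whose translates multiply into $B_F^{\mathbf a}$; producing one is the whole problem.

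The missing idea is a shift-stable module whose parameter support is purely divisorial and which agrees with $M=\Dmod_X[\svect]F^{\svect}/\Dmod_X[\svect]F^{\svect+\mathbf a}$ at height-one primes. The paper builds it as follows.
\begin{enumerate}
\item Take the maximal tame pure extension $\widehat L$ of $N=\Dmod_X[\svect]F^{\svect}$. It satisfies $j(\widehat L/N)\geq n+2$ and $\tau\widehat L\subseteq\widehat L$.
\item Then $P=\widehat L/\tau\widehat L$ is $(n+1)$-pure. Since $M\to P$ has kernel and cokernel of grade $\geq n+2$, one gets $\Supp_R P=W^{(1)}_{\mathbf a}$.
\item Noetherianity of $\widehat L/N$ gives $\ker\overline\tau^{q+1}=\ker\overline\tau^{q}$, hence $N\cap\tau^{q+1}\widehat L\subseteq\tau N$.
\item So $M$ is a subquotient of $\widehat L/\tau^{q+1}\widehat L$. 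Its factors $\tau^k\widehat L/\tau^{k+1}\widehat L$ have support $W^{(1)}_{\mathbf a}-k\mathbf a$.
\end{enumerate}
Your fallbacks do not substitute for this. The duality and semicontinuity comparisons are only named, and Maisonobe's theorem covers only $\mathbf a=\one$.

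Your ``main obstacle'' paragraph also misplaces the difficulty. Once the covering holds, (2) is immediate: an irreducible component $C$ lies in a single $H-k\mathbf a$, with $k\geq1$ when $\operatorname{codim}C\geq2$. The statement only asks for $C+\mathbf v\subseteq H$.

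\textbf{Part (1), descent.} You follow Kashiwara's route rather than the paper's. That is legitimate, but the descent is asserted rather than carried out.

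Put $\mathcal N_{\mathbf b}=F_Y^{\svect+\mathbf b}\mu^*(dx)\cdot\Dmod_Y[\svect]$, and let $\mathcal M_{\mathbf b}$ be the image of $H^0\mu_+\mathcal N_{\mathbf b}$ in $j_*(\mathcal O_U[\svect]F^{\svect})\otimes_{\mathcal O_X}\omega_X$. An upstairs relation $b\,\mathcal N_{\mathbf 0}\subseteq\mathcal N_{\mathbf a}$ only yields $b\,\mathcal M_{\mathbf 0}\subseteq\mathcal M_{\mathbf a}$. Two things are still missing. First, you need $F^{\svect}dx\in\mathcal M_{\mathbf 0}$, argued as in Lemma~\ref{lem:global-section-trick}. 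Second, $\mathcal M_{\mathbf a}$ need not lie in $F^{\svect+\mathbf a}dx\cdot\Dmod_X[\svect]$.

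The repair is as follows. The semilinear shift carries $\mathcal M_{\mathbf b}$ onto $\mathcal M_{\mathbf b+\mathbf c}$. Coherence of $\mathcal M_{\mathbf 0}$ then gives $\mathcal M_{\mathbf a+d\one}\subseteq F^{\svect+\mathbf a}dx\cdot\Dmod_X[\svect]$ for $d\gg0$. Taking the product of the upstairs unit-step $b$-functions along a path from $\mathbf 0$ to $\mathbf a+d\one$ gives an annihilator with factors $L_E(\svect)+L_E(\boldsymbol\beta)+k_E+h$, where $\boldsymbol\beta\geq0$ and $h\geq1$. These have the required form.

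Two corrections to your attributions. The term $k_E$ comes from the twist by $\mu^*(dx)$. The bound $c\geq1$ comes from $(y^{A+m}dy)\partial^m=(-1)^m\prod_{h=1}^m(A+h)y^A\,dy$, not from the twist. Also, the Bernstein--Sato ideal of the monomial tuple itself involves no $k_E$.

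The paper avoids the extra shift. It localizes at a height-one prime off these hyperplanes and identifies the upstairs lattice with $j'_!$ (Lemma~\ref{lem:upstairs-jshriek}). It then gets $\mu_+j'_!\simeq j_!$ from Theorem~\ref{thm:relative-duality-direct-image}.

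\textbf{Part (1), the clause ``$l_j>0$ for some $a_j\neq0$''.} Neither route separates divisors with $L_E(\mathbf a)=0$, so this clause needs its own proof. Agreement of $F^{\svect}$ and $F^{\svect+\mathbf a}$ ``generically along $E$'' on $Y$ says nothing about the localization at $I(H)$ on $X$.

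One argument that works, for a component $H=Z(l\cdot\svect+b)$ with $l\cdot\mathbf a=0$:
\begin{enumerate}
\item The shift $\tau_{\mathbf a}$ fixes $\mathfrak p=I(H)$. It therefore acts semilinearly and bijectively on the coherent localized maximal extension of Theorem~\ref{thm:maximal-extension}.
\item Hence all quotients $\Dmod_X[\svect]_{\mathfrak p}F^{\svect+k\mathbf a}/\Dmod_X[\svect]_{\mathfrak p}F^{\svect+(k+1)\mathbf a}$, $k\in\mathbb Z$, have the same finite length.
\item For every $m$, the module $\Dmod_X[\svect]_{\mathfrak p}F^{\svect-m\mathbf a}/\Dmod_X[\svect]_{\mathfrak p}F^{\svect}$ has $m$ times that length. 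It embeds in $j_*(\mathcal O_U[\svect]_{\mathfrak p}F^{\svect})/\Dmod_X[\svect]_{\mathfrak p}F^{\svect}$, which has finite length since it equals $(M_F^{-K\one,\mathbf 0})_{\mathfrak p}$ for $K\gg0$.
\item So the common length is $0$, which contradicts $\mathfrak p\in Z(B_F^{\mathbf a})$.
\end{enumerate}
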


For $r=1$, assertion~(1) is
equivalent to Kashiwara's rationality theorem that the roots of the
Bernstein--Sato polynomial $b_f$ are negative rational numbers
\cite{Kas76}. Assertion~(1), without the additional requirement
that $l_j>0$ for some $j$ with $a_j\neq0$, is due to Sabbah
\cite{Sab87I} and Gyoja \cite{Gyo93}. In the case
$\mathbf a=\mathbf 1$, assertion~(2) was established by Maisonobe
\cite{Mai23}; a different proof was subsequently given by
van der Veer \cite{vanderVeer2021}. Budur, van der Veer, and Van Werde
\cite{BudurVanDerVeerVanWerde2024} refined assertion~(1)
by describing the codimension-one components in terms of
the numerical data of a log resolution.

Fix a log resolution $\mu:Y\to X$ of $(X,D)$ which is an isomorphism over $X\setminus D$. Let $\mathcal E$ be the set of irreducible components of $\mu^{-1}D$. For $E\in\mathcal E$, set
\begin{equation}\label{eq:intro-resolution-data}
N_{E,i}=\operatorname{ord}_E(f_i\circ\mu),\qquad
L_E(\svect)=\sum_{i=1}^r N_{E,i}s_i,\qquad
k_E=\operatorname{ord}_E(K_{Y/X}).
\end{equation}
Thus $k_E$ is the order of the Jacobian determinant of $\mu$ along $E$, and $L_E(\one)>0$, where $\one=(1,\ldots,1)$. One has:

\begin{theorem}[{\cite[Theorem 1.2]{BudurVanDerVeerVanWerde2024}}]\label{Theorem:1.1}
Every irreducible component of $Z(B_F^{\mathbf a})$ of codimension one is a hyperplane
\[
L_E(\svect)+k_E+c=0
\]
for some $E\in\mathcal E$ and some $c\in\mathbb Z_{>0}$.
\end{theorem}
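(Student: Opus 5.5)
The plan is to push the problem to the log resolution $\mu:Y\to X$, where $F\circ\mu$ is locally monomial, compute there, and descend to $X$ by a direct image. The direct image should only ever shift the hyperplanes by positive integers. The abstract suggests working with localized maximal and minimal extensions of relative $\Dmod$-modules, so I will organize the argument around them.

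\emph{Step 1: identify the module on $X$.} Let $j:U=X\setminus D\hookrightarrow X$ and consider the relative $\Dmod_X[\svect]$-module $\Mmod=\Dmod_X[\svect]F^{\svect}$. It sits inside the localized module $j_*\mathcal O_U[\svect]F^{\svect}$, which I regard as a localized maximal extension. The quotient $\Mmod/\Dmod_X[\svect]F^{\svect+\mathbf a}$ is relative holonomic, so by relative holonomicity (Maisonobe, Budur--van der Veer--Wu--Zhou) its annihilator has support of codimension $\geq 1$. It therefore suffices to find a nonzero polynomial $b(\svect)$ that annihilates this quotient and is a product of linear forms of the shape $L_E(\svect)+k_E+c$ with $c\in\Cring\cdot$ positive integers. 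Any codimension-one component of $Z(B_F^{\mathbf a})$ must then be one of these hyperplanes.

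\emph{Step 2: the computation on $Y$.} Locally on $Y$ we have $f_i\circ\mu=u_i\prod_E y_E^{N_{E,i}}$ and $\mu^*dx=v\prod_E y_E^{k_E}dy$. Write $G=F\circ\mu$. The right $\Dmod_Y[\svect]$-module $\omega_Y\otimes\mathcal O_Y[\svect]G^{\svect}$, after twisting by the Jacobian, is generated by $G^{\svect}\prod y_E^{k_E}$. Applying the operators $y_E\partial_{y_E}$ (corrected for the units) to the monomial gives the relations
\[
\prod_E\prod_{c=1}^{m_E}\bigl(L_E(\svect)+k_E+c\bigr)\,G^{\svect}dy\in\Dmod_Y[\svect]\,G^{\svect+\mathbf a}dy ,
\]
where $m_E=L_E(\mathbf a)$. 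Equivalently, the minimal extension on $Y$ (the $\Dmod_Y[\svect]$-submodule generated by $G^{\svect}$) and its shifted copy differ by a module annihilated by this product.

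\emph{Step 3: descend.} Take the relative direct image $\mu_+$ of right modules. Because $\mu$ is an isomorphism over $U$, the image of the generator of the minimal extension on $Y$ maps onto $\Dmod_X[\svect]F^{\svect}$ inside the localized module on $X$. Concretely, the trace or integration map sends $G^{\svect}\mu^*dx$ to $F^{\svect}dx$. One then shows that if $b\cdot G^{\svect}\in\Dmod_Y[\svect]G^{\svect+\mathbf a}$, then some product $b\cdot b'$ kills $\Mmod/\Dmod_X[\svect]F^{\svect+\mathbf a}$, where $b'$ is also a product of such hyperplanes with positive $c$. The mechanism is that $\mu_+$ of the minimal extension surjects, in degree $0$, onto the $X$-module generated by $F^{\svect}$. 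The higher cohomology of $\mu_+$ and the cokernel should be supported on the same hyperplane arrangement, with the integers $c$ shifted further up.

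The main obstacle is this descent. Direct images of relative modules do not commute naively with taking annihilators, and the higher direct images contribute extra factors. I would first try the Sabbah--Gyoja style argument. Via the standard properness argument, $B_F^{\mathbf a}\supseteq\prod B_{G,\text{loc}}^{\mathbf a}$ up to finitely many integer shifts, which reduces to a Bernstein--Sato type statement on $Y$: there we use a filtration by the localized maximal and minimal extensions, whose graded pieces are supported on strata $\bigcap E$. One must check that every shift produced has the form $L_E(\svect)+k_E+c$ with $c\in\mathbb Z_{>0}$. I expect positivity of $c$ to come from the fact that only $c\ge1$ factors arise in Step 2 and that the shifts in Step 3 only increase $c$. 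That point is where care is needed.
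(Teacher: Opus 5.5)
Your Step 1 reduction is valid. Step 2 is the right local computation, provided the upstairs generator is $F_Y^{\svect}\mu^*(dx)$ rather than $G^{\svect}dy$ (with $dy$ the factors would be $L_E(\svect)+c$, not $L_E(\svect)+k_E+c$), and the units $u_i$ are removed as in Proposition~\ref{prop:local-normal-crossing}.

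The gap is Step 3. It carries the entire content of the theorem, and you leave it as an expectation. In particular, positivity of $c$ after descent (``shifts only increase $c$'') is asserted, not derived. Several claims there are also inaccurate:
\begin{itemize}
\item The lattice $F_Y^{\svect}\mu^*(dx)\cdot\Dmod_Y[\svect]$ is not the minimal extension. The minimal extension is a coherent module only after localizing at a prime $\mathfrak p$. The cyclic lattice equals it exactly when $\mathfrak p$ avoids every hyperplane $L_E(\svect)+k_E+c$ with $c\in\mathbb Z_{>0}$; that is what Proposition~\ref{prop:local-normal-crossing} and Lemma~\ref{lem:upstairs-jshriek} prove.
\item $\mathcal H^0\mu_+$ of the upstairs lattice does not surject onto $\Dmod_X[\svect]F^{\svect}$. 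What is true, and all that is needed, is that its image in $\mathcal O_X[f^{-1},\svect]F^{\svect}dx$ contains $F^{\svect}dx$.
\item The extra factors you expect from higher direct images never arise. The paper shows the relevant direct image is concentrated in degree zero, and the lattice argument below uses only $\mathcal H^0$.
\end{itemize}

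The paper avoids computing annihilators of direct images by arguing at a single height-one prime. Let $H$ be a codimension-one component not of the stated form, and let $\mathfrak p=I(H)$. Then every $L_E(\svect)+k_E+c$ with $c>0$ is a unit in $R_{\mathfrak p}$, so the localized upstairs lattice equals $j'_!\otimes\omega_Y$. Since $\mu_+$ commutes with relative duality and $\mu_+j'_*=j_*$, one gets $\mu_+j'_!=j_!$ in degree zero. Tracking the generator through $R\mu_*\mathcal O_Y=\mathcal O_X$ then puts $F^{\svect}dx$ into $j_!\otimes\omega_X$. Hence $(M_F^{\mathbf 0,\mathbf a})_{\mathfrak p}=0$, contradicting $\mathfrak p\in\Supp_R M_F^{\mathbf 0,\mathbf a}$.

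If you prefer your global route, Kashiwara's lattice argument completes it. Let $\mathcal M'$ be the image of $\mathcal H^0\mu_+\bigl(F_Y^{\svect}\mu^*(dx)\cdot\Dmod_Y[\svect]\bigr)$ in $\mathcal O_X[f^{-1},\svect]F^{\svect}dx$. Let $t_{\mathbf b}$ be the semilinear shift $gF^{\svect}dx\mapsto g(\svect+\mathbf b)F^{\svect+\mathbf b}dx$, and put $b^{\mathbf b}_Y=\prod_E\prod_{h=1}^{L_E(\mathbf b)}(L_E(\svect)+k_E+h)$. Then:
\begin{itemize}
\item $\mathcal M'$ is coherent and contains $F^{\svect}dx$;
\item functoriality of $\mathcal H^0\mu_+$, applied to the upstairs functional equation, gives $b^{\mathbf b}_Y(\svect)\mathcal M'\subseteq t_{\mathbf b}\mathcal M'$ for $\mathbf b\geq\mathbf 0$;
\item locally on $X$, coherence gives $\mathcal M'\subseteq F^{\svect-k\one}dx\cdot\Dmod_X[\svect]$ for some $k$.
\end{itemize}
Iterating yields
\[
b^{\mathbf a}_Y(\svect)\prod_{j=0}^{k-1}b^{\one}_Y(\svect+\mathbf a+j\one)\in B_F^{\mathbf a},
\]
which is a product of forms $L_E(\svect)+k_E+c$ with $c\geq1$. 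Iterating the $\mathbf a$-shift alone fails when some $a_i=0$, because $F^{\mathbf a}$ need not clear the poles of $\mathcal M'$ along all of $D$. Your sketch does not address this point.
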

Theorem~\ref{Theorem:1.1} gives the lower bound $c\geq1$. Our first aim is to give a new proof of
Theorem~\ref{Theorem:1.1} using localized relative
$\Dmod_X$-modules.

\subsection{The upper bound}
In the one-variable case, an upper bound of $c$ follows from Saito's estimate for the roots of the microlocal $b$-function. The problem of obtaining an upper bound for arbitrary tuples was raised in \cite[\S1]{BudurVanDerVeerVanWerde2024}. In this paper, we give such a bound in terms of the minimal exponent of the product $f$.

Let
$\widetilde b_f(s)=\frac{b_f(s)}{s+1}$
be the microlocal $b$-function. we write $$\alpha_f=\min Z\bigl(\widetilde b_f(-s)\bigr),\quad \delta_f=\min\{n-1,\alpha_f\}.$$ Saito's theorem \cite[Theorem 0.4]{Saito1994Microlocal} gives
$Z\bigl(\widetilde b_f(-s)\bigr)\subseteq[\alpha_f,n-\alpha_f]$.
Since $Z(b_f)=Z(\widetilde b_f)\cup\{-1\}$, we obtain
\begin{equation}\label{eq:intro-root-bound}
\rho\geq-n+\delta_f\qquad\text{for every }\rho\in Z(b_f).
\end{equation}

\begin{theorem}\label{Theorem:1.2}
Let $H$ be an irreducible component of $Z(B_F^{\mathbf a})$ of codimension one. If
\[
H=Z\bigl(L_E(\svect)+k_E+c\bigr),\qquad c\in\mathbb Z_{>0},
\]
then
\begin{equation}\label{eq:main-upper-bound}
c\leq L_E(\mathbf a)+(n-1-\delta_f)L_E(\one)-k_E.
\end{equation}
\end{theorem}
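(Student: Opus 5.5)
We reduce the bound to one concerning the diagonal slice of $Z(B_F^{\mathbf 1})$ and the roots of $b_f$, as the abstract indicates.

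\emph{Reduction from $\mathbf a$ to $\mathbf 1$-type data.} Let $H=Z(L_E(\svect)+k_E+c)$ be a codimension-one component of $Z(B_F^{\mathbf a})$. We use the standard inclusions $B_F^{\mathbf a}\cdot B_F^{\mathbf b}(\svect+\mathbf a)\subseteq B_F^{\mathbf a+\mathbf b}$ and their converses up to translates. With these, we expect to obtain a finite covering
\[
Z(B_F^{\mathbf a})\subseteq\bigcup_{k}\bigl(Z_1(B_F^{\mathbf 1})-k\mathbf a\bigr)
\]
by translates of the codimension-one part along vectors $-k\mathbf a$, where $Z_1$ denotes the codimension-one part. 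The aim is to produce a point $\svect_0\in H$ such that a translate of $\svect_0$ lies on a hyperplane $L_{E}(\svect)+k_{E}+c'=0$ with the same $E$ and a controlled constant.

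We then exploit the log-resolution description. By Theorem~\ref{Theorem:1.1} and the new proof through localized maximal and minimal extensions, every hyperplane is attached to some $E\in\mathcal E$. Evaluating along the direction $\mathbf a$ changes $L_E$ by $L_E(\mathbf a)$, and this accounts for the term $L_E(\mathbf a)$ in \eqref{eq:main-upper-bound}. The problem is thus reduced to showing
\[
c\leq (n-1-\delta_f)L_E(\one)-k_E
\]
for the relevant hyperplanes coming from shift $\mathbf 1$, after absorbing the shift.

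\emph{Diagonal specialization.} Restricting to the diagonal $\svect=s\one$ gives $B_F^{\mathbf 1}|_{\svect=s\one}\subseteq(b_f(s))$, since $F^{s\one}=f^s$ and any relation for $F$ specializes to one for $f$. The covering and the resolution description should then show that the diagonal slice $\Delta:=\{s: s\one\in Z(B_F^{\mathbf 1})\}$ and $Z(b_f)$ have the same least and greatest points. One inclusion follows from specialization. For the other, we use that the candidate roots $-(k_E+c)/L_E(\one)$ arise from the same $E$, together with translation invariance along $-k\one$ to hit the extreme points. Applying \eqref{eq:intro-root-bound} to the common least point $\rho_{\min}\geq -n+\delta_f$ gives the required estimate. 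The diagonal point of $H$ is $s=-(k_E+c)/L_E(\one)$. To use this, we must argue that it is bounded below by $\rho_{\min}$, possibly only after translating $H$ by the covering; this is where the $L_E(\mathbf a)$ slack enters. The resulting inequality is $(k_E+c)/L_E(\one)\leq n-\delta_f$ up to shift, i.e.
\[
c\leq (n-\delta_f)L_E(\one)-k_E,
\]
and the refinement to $n-1-\delta_f$ should come from the $+\mathbf a$ shift convention: the point lies in $Z$ shifted by $\mathbf a$ relative to $b_f$'s normalization. Concretely, $-1$ is the root corresponding to $\mathbf a$, and one $L_E(\one)$ is traded against $L_E(\mathbf a)$.

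\emph{Main obstacle.} The hard step is the claim that the least point of $\Delta$ equals the least root of $b_f$. We need not merely $\Delta\supseteq$ the roots, but that no point of a codimension-one hyperplane of $Z(B_F^{\mathbf a})$, restricted to the diagonal, lies below $\min Z(b_f)$. We would first try to prove this by contradiction. Take the hyperplane $H$ minimizing the diagonal value. The minimal-extension description then shows that the corresponding $\Dmod_X[\svect]$-module quotient has support meeting the diagonal at a point that must give a root of $b_f$, because its specialization $\Dmod_X[s]f^s/\Dmod_X[s]f^{s+1}$ is nonzero there. Extremality prevents cancellation with other components.
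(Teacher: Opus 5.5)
There is a genuine gap, both in the reduction from $\mathbf a$ to the diagonal and in the key lemma.

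\textbf{The reduction.} The covering you write, $Z(B_F^{\mathbf a})\subseteq\bigcup_k(Z_1(B_F^{\one})-k\mathbf a)$, is false. For $f=x$ on $\mathbb A^1$ and $\mathbf a=2$ one has $Z(B_f^2)=\{-1,-2\}$, while the right side is $\{-1,-3,-5,\dots\}$. The correct covering uses the codimension-one part of $Z(B_F^{\mathbf a})$ itself, and it says nothing about a codimension-one $H$. Three further problems follow:
\begin{itemize}
\item The greatest point of the slice plays no role in this bound.
\item Your intermediate target $c\le(n-1-\delta_f)L_E(\one)-k_E$ for components of $Z(B_F^{\one})$ is false as stated: for the same $f=x$ one has $c=1$ and the right side is $0$.
\item A component of $Z(B_F^{\mathbf a})$ need not lie in $Z(B_F^{\one})$, so ``the diagonal point of $H$'' need not belong to $S=\{t:t\one\in Z(B_F^{\one})\}$.
\end{itemize}

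The trade of one $L_E(\one)$ against $L_E(\mathbf a)$ has to come from an explicit translation. Take a lattice path $\mathbf0=\boldsymbol\beta^0,\dots,\boldsymbol\beta^m=\mathbf a$ with unit steps $\mathbf e_{i_j}$. Since $\prod_j\tau_{\boldsymbol\beta^j}(B_F^{\mathbf e_{i_j}})\subseteq B_F^{\mathbf a}$, some $H+\boldsymbol\beta^j$ lies in $Z(B_F^{\mathbf e_i})$ with $i=i_j$. Next, $M_F^{\one-\mathbf e_i,\one}\subseteq Q_F$ (use a path to $\one$ whose last step is $\mathbf e_i$), which gives $Z(B_F^{\mathbf e_i})-(\one-\mathbf e_i)\subseteq Z(B_F^{\one})$. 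The resulting translate of $H$ meets the diagonal at
\[
t=-\frac{k_E+c+L_E(\one)-N_{E,i}-L_E(\boldsymbol\beta^j)}{L_E(\one)}\in S.
\]
Combining $t\ge-n+\delta_f$ with $L_E(\boldsymbol\beta^j)+N_{E,i}\le L_E(\mathbf a)$ gives exactly the claimed bound. The $-1$ in $n-1-\delta_f$ comes from the translate by $\one-\mathbf e_i$.

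\textbf{The key lemma.} The step you call the main obstacle is the core of the proof, and your sketch does not supply it; the minimal extension plays no role there. What is needed is that $s_0=\min S$ is a root of $b_f$. Note that $S$ is finite by Sabbah--Gyoja, so this minimum exists. The argument runs as follows.
\begin{itemize}
\item The specialization $Q_F/I_\Delta Q_F\to Q_f$ is surjective with kernel $K$.
\item Iterating shifted functional equations clears the pole order $f^{-d}$ of representatives of $\ker\eta_F\cap M_F$. This shows that $K$ is annihilated by $\prod_{j=1}^d\tau_{-j\one}(B_F^{\one})$, so $\Supp_R K\subseteq\bigcup_{m>0}(Z(B_F^{\one})+m\one)$.
\item At $\alpha=s_0\one$, minimality of $s_0$ forces $K_{\mathfrak m_\alpha}=0$. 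Hence $Q_F\otimes_R R/\mathfrak m_\alpha\simeq Q_f\otimes_{\Cring[s]}\Cring[s]/(s-s_0)$.
\item One still needs the left side to be nonzero. This is not Nakayama, since $Q_F$ is not finitely generated over $R$. It is van der Veer's fiber nonvanishing theorem for relative holonomic modules, transported from $\mathbb A^N$ to a smooth affine $X$ by relative Kashiwara equivalence.
\end{itemize}
Your ``extremality prevents cancellation'' gestures at the kernel step. But neither the kernel-support computation nor the fiber-nonvanishing input appears in your proposal. Without them, $\min S\ge\rho_-\ge-n+\delta_f$ is unproved.
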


Together with Theorem~\ref{Theorem:1.1}, this gives a finite interval of possible integers $c$ for each component $E$ of the chosen resolution. When $r=1$, the estimate becomes $c\leq(n+a-1-\delta_f)N_E-k_E$, which is the bound obtained from the one-variable root estimate. When $n=1$, it becomes $c\leq L_E(\mathbf a)$; this bound is attained by tuples of monomials on a smooth curve. These cases are discussed in Section~\ref{sec:sharpness}.

\subsection{Positive translations and the diagonal interval}
Write $W_{\mathbf a}=Z(B_F^{\mathbf a})$, and let $W_{\mathbf a}^{(1)}$ be the union of its codimension-one irreducible components. The following refinement of Theorem~\ref{thmbvdwz} (2) is well known to experts, but we are not aware of an explicit reference in the literature. We include
a proof for completeness.

\begin{theorem}\label{thm:intro-positive-translation}
Let $\mathbf a\in\mathbb Z_{\geq0}^r$ with $F^{\mathbf a}$ not invertible. There is an integer $q\geq0$ such that
\begin{equation}\label{eq:intro-positive-cover}
W_{\mathbf a}\subseteq\bigcup_{k=0}^{q}
\bigl(W_{\mathbf a}^{(1)}-k\mathbf a\bigr).
\end{equation}
In particular, for every irreducible component $C$ of $W_{\mathbf a}$ of codimension at least two, there are $k\geq1$ and a codimension-one component $H$ of $W_{\mathbf a}$ such that $C+k\mathbf a\subseteq H$.
\end{theorem}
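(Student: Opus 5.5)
We will use the standard chain of Bernstein--Sato ideals under the shift. Write $M_j=\Dmod_X[\svect]F^{\svect+j\mathbf a}$ for $j\geq0$, so that $M_0\supseteq M_1\supseteq\cdots$. Let $T$ denote the automorphism of $R$ given by $T(b)(\svect)=b(\svect+\mathbf a)$, i.e.\ translation by $\mathbf a$. The $\Dmod_X$-linear substitution $\svect\mapsto\svect+j\mathbf a$ identifies $M_j/M_{j+1}$ with $M_0/M_1$, after twisting the $R$-action by $T^j$. Hence
\[
\Ann_R(M_j/M_{j+1})=T^{-j}B_F^{\mathbf a},\qquad
Z\bigl(\Ann_R(M_j/M_{j+1})\bigr)=W_{\mathbf a}-j\mathbf a .
\]
Conversely, $W_{\mathbf a}$ is the support (as an $R$-module, up to radical) of $M_0/M_1$.

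The plan is to show that $M_0/M_1$ is, up to finitely many steps, controlled by its "pure codimension-one part". We use the relative holonomicity of $M_0/M_1$ over $R$ (Maisonobe). Its $R$-support is $W_{\mathbf a}$, and we decompose $W_{\mathbf a}=W^{(1)}_{\mathbf a}\cup W'$ with $W'$ the union of higher-codimension components.

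Key step: fix $b\in B_F^{\mathbf a}$ vanishing exactly on $W^{(1)}_{\mathbf a}$ up to multiplicity. One may take the generator of the divisorial part, or the product of linear forms defining the hyperplanes of Theorem~\ref{Theorem:1.1}, raised to a power. Consider the submodule $N=b(\svect)M_0+M_1$. The quotient $M_0/N$ is supported on $W^{(1)}_{\mathbf a}$. The module $N/M_1$ has support contained in the higher-codimension locus $W'$, or is zero after finitely many iterations.

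Iterating with the shifts, we would show that some product
\[
b(\svect)\,b(\svect+\mathbf a)\cdots b(\svect+q\mathbf a)
\]
lies in $\Ann_R(M_0/M_{q+1})$, and therefore in the annihilator of $M_0/M_1$. Its zero set is $\bigcup_{k=0}^q(W^{(1)}_{\mathbf a}-k\mathbf a)$, which gives \eqref{eq:intro-positive-cover}. To produce such a product, the cleanest route is Maisonobe's/van der Veer's argument for Theorem~\ref{thmbvdwz}(2). That argument shows each higher-codimension component $C$ satisfies $C+k\mathbf a\subseteq W^{(1)}_{\mathbf a}$ for some integer $k$, via the fact that $M_0/M_j$ has pure-dimensional support after localization. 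The additional task is to show that the sign of $k$ can be taken positive and bounded. For this we use that $\bigcap_j M_j$ localizes to zero at generic points of components of $W'$. Concretely, $b_F^{\mathbf a}$-type relations force $M_j\subseteq b'(\svect)M_0$ for large $j$, with $b'$ coprime to the ideal of $C$. This shows $C$ is absorbed by the codimension-one translates $W^{(1)}_{\mathbf a}-k\mathbf a$ with $0\le k\le q$.

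The second assertion follows immediately. If $C$ has codimension at least two, it lies in some $W^{(1)}_{\mathbf a}-k\mathbf a$ by irreducibility, so $C+k\mathbf a\subseteq H$ for a codimension-one component $H$. Here $k=0$ is impossible, since $C$ is a component of $W_{\mathbf a}$ and not contained in another one. Hence $k\geq1$.

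The main obstacle is the positivity and boundedness of the shifts: ruling out $k<0$ in the Maisonobe-type translation. I would first try to use the Noetherian stabilization of the descending chain of annihilators $\Ann_R(M_0/M_j)$ together with the purity (Cohen--Macaulay/dimension) of the relative characteristic cycle of $M_0/M_j$. Embedded higher-codimension associated primes of $M_0/M_1$ must then come from codimension-one primes of some $M_j/M_{j+1}$ with $j\ge1$, which are exactly the $-j\mathbf a$ translates.
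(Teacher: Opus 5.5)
There is a genuine gap. Your derivation of the second assertion from the covering is correct and matches the paper. The covering itself is never established, however, and in particular you never show that the translations go in the direction $+k\mathbf a$ with $0\leq k\leq q$. Each concrete step you propose for it fails.

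(i) No $b\in B_F^{\mathbf a}$ can vanish exactly on $W^{(1)}_{\mathbf a}$ when $W'\neq\varnothing$, because every element of $B_F^{\mathbf a}$ vanishes on $W_{\mathbf a}\supseteq W'$. A product of high powers of the hyperplane equations does kill $(M_0/M_1)_{\mathfrak p}$ at every height-one prime, but it is not in $B_F^{\mathbf a}$.

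(ii) Your target $\prod_{k=0}^q b(\svect+k\mathbf a)\in\Ann_R(M_0/M_{q+1})=B_F^{(q+1)\mathbf a}$ asks for too much. Since $Z(B_F^{(q+1)\mathbf a})=\bigcup_{k=0}^{q}(W_{\mathbf a}-k\mathbf a)\supseteq W'-q\mathbf a$, it would force every component $C\subseteq W'$ to also satisfy $C-m\mathbf a\subseteq W^{(1)}_{\mathbf a}$ for some $m\geq1$. That is a two-sided translation property the theorem does not assert, and you give no argument for it. What you actually need is only that the product lie in $B_F^{\mathbf a}$.

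(iii) The inclusion $M_j\subseteq b'(\svect)M_0$ with $b'$ nonconstant is impossible. Inside the free $\mathcal O_X[f^{-1},\svect]$-module on $F^{\svect}$ it says $b'(\svect)g=F^{j\mathbf a}$, which is a unit of $\mathcal O_X[f^{-1}]$. So $b'$ would be a unit of $\mathcal O_X[f^{-1}][\svect]$, hence constant. The reverse inclusion $b'M_0\subseteq M_j$ would put $b'$ in $B_F^{j\mathbf a}$, so $b'$ would vanish on $C$, contradicting $b'\notin I(C)$. The remark that $\bigcap_jM_j$ vanishes after localization gives nothing, since localization does not commute with infinite intersections.

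(iv) Your fallback, ``Noetherian stabilization of the descending chain $\Ann_R(M_0/M_j)$'', is not available. Noetherianity gives ACC, not DCC. Moreover this chain is strictly decreasing: $Z(B_F^{j\mathbf a})=\bigcup_{k<j}(W_{\mathbf a}-k\mathbf a)$ acquires a new translate of a codimension-one component at every step.

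The missing idea is a module that satisfies two things at once: its shift-quotients have purely divisorial support, and it carries an \emph{ascending} chain to which noetherianity applies. The paper uses the maximal tame pure extension $\widehat L\supseteq N=M_0$ inside $\mathcal O_X[f^{-1},\svect]F^{\svect}$ from \cite[\S4.4]{BvdVWZ21b}. It has three properties:
\begin{enumerate}
\item $j(\widehat L/N)\geq n+2$;
\item $\tau\widehat L\subseteq\widehat L$ for the semilinear shift $\tau(gF^{\svect})=g(x,\svect+\mathbf a)F^{\mathbf a}F^{\svect}$;
\item $P=\widehat L/\tau\widehat L$ is $(n+1)$-pure with pure codimension-one parameter support.
\end{enumerate}
The natural map $M_0/M_1\to P$ has kernel and cokernel of grade at least $n+2$. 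Hence it is an isomorphism at height-one primes, and $\Supp_RP=W^{(1)}_{\mathbf a}$.

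Next, the kernels of $\overline\tau^{m}$ on $Q=\widehat L/N$ form an ascending chain of submodules, so they stabilize at some $q$. This gives $N\cap\tau^{q+1}\widehat L\subseteq\tau N$. Therefore $M_0/M_1$ is a quotient of $N/(N\cap\tau^{q+1}\widehat L)\hookrightarrow\widehat L/\tau^{q+1}\widehat L$. The latter is filtered by $\tau^k\widehat L/\tau^{k+1}\widehat L$, whose supports are $W^{(1)}_{\mathbf a}-k\mathbf a$ for $0\leq k\leq q$.

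Both the sign and the bound come from here. Purity of $P$ keeps $W'$ out of the building blocks. ACC on the kernels, rather than on any chain of annihilators, bounds the number of shifts.
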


For the simultaneous shift $\mathbf{a}=\mathbf{1}$, consider the diagonal slice
\[
S=\{t\in\Cring:t\one\in Z(B_F^{\one})\},
\]
Theorem~\ref{thm:intro-positive-translation} controls the upper endpoint of this slice, while diagonal specialization controls its lower endpoint. We write $\rho_-:=\min Z(b_f)$ and $\rho_+:=\max Z(b_f)$ for the least
and greatest roots of $b_f$ respectively.

\begin{theorem}\label{thm:intro-diagonal-interval}
The set $S$ is a finite subset of $\mathbb Q_{<0}$, and
\begin{equation}\label{eq:intro-diagonal-interval}
Z(b_f)\subseteq S\subseteq[\rho_-,\rho_+].
\end{equation}
Every element of $t\in S$ is of the form $t=\rho+m$ where $\rho \in Z(b_f), m\in \mathbf{N}$. In particular, $\min S=\rho_-$ and $\max S=\rho_+$.
\end{theorem}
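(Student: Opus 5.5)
The plan is to compare the diagonal slice $S$ with $Z(b_f)$ through diagonal specialization, and then to use Theorem~\ref{thm:intro-positive-translation} for the upper endpoint.

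\emph{Step 1: $Z(b_f)\subseteq S$.} Restrict to the diagonal $\svect\mapsto s\one$. Any $b\in B_F^{\one}$ satisfies $b(\svect)F^{\svect}=P(\svect)F^{\svect+\one}$. Substituting $s_i=s$ turns $F^{s\one}$ into $f^s$ and $F^{s\one+\one}$ into $f^{s+1}$, so $b(s\one)\in(b_f(s))$. Thus the ideal $\{b(s\one):b\in B_F^{\one}\}$ is contained in $(b_f)$. Its zero set is exactly $S$, so every root of $b_f$ lies in $S$. For the reverse direction, $B_F^{\one}\neq0$ (Sabbah), and Theorem~\ref{Theorem:1.1} together with Theorem~\ref{thmbvdwz}(1) shows that no $b\in B_F^{\one}$ vanishes identically on the diagonal. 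Indeed, every codimension-one component is $L_E(\svect)+k_E+c=0$ with $L_E(\one)>0$, and higher-codimension components are translates of those. Hence $S$ is finite. Moreover, $t\in S$ lies on a translate of some hyperplane $L_E(\svect)+k_E+c=0$, which forces $t$ to be rational. Since $L_E(\one)t=-(k_E+c)-L_E(\text{translation})$, the shifts are controlled by Theorem~\ref{thm:intro-positive-translation}.

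\emph{Step 2: $t=\rho+m$ with $m\in\mathbb N$.} This is the main obstacle. My approach is a standard functional-equation argument. Write $b_f(s)f^s=Pf^{s+1}$. Iterating along the diagonal, and using that $B_F^{\one}$ contains $\prod_{j}$ of products of the $N$ factors corresponding to the hyperplanes, I would instead work the other way. For $t\in S$, the point $t\one$ is in the support of the $R$-module $M=\Dmod_X[\svect]F^{\svect}/\Dmod_X[\svect]F^{\svect+\one}$. Specialize $M\otimes R/(s_i-s_j)_{i,j}$ and compare with $\Dmod_X[s]f^s/\Dmod_X[s]f^{s+1}$. The natural surjection $M\otimes_R\Cring[s]\to\Dmod_X[s]f^s/\Dmod_X[s]f^{s+1}$ has a kernel whose support I would bound using the filtration $\Dmod_X[\svect]F^{\svect+k\one}$ for $k\ge0$. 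Each graded piece is a $\Cring[s]$-translate by an integer $k$ of a module surjecting onto the one-variable quotient. This should show that the diagonal support of $M$ lies in $\bigcup_{k\ge0}(Z(b_f)+k)$. Alternatively, I would use Theorem~\ref{thm:intro-positive-translation}: the point $t\one+k\one$ lies on $W^{(1)}_{\one}$ for some $k\ge0$, and codimension-one points then relate to $Z(b_f)$ via the log-resolution description, where the candidate roots of $b_f$ are $-(k_E+c)/N_E$ with $N_E=L_E(\one)$.

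\emph{Step 3: endpoints.} Since $Z(b_f)\subseteq S$ and every $t\in S$ has the form $\rho+m$ with $m\ge0$, we get $\min S\ge\rho_-$, and hence $\min S=\rho_-$. For the upper endpoint I would use Theorem~\ref{thm:intro-positive-translation}. Every $t\in S$ satisfies $t+k\in S^{(1)}$ for some $k\ge0$, where $S^{(1)}$ is the diagonal slice of $W^{(1)}_{\one}$, so it suffices to show $S^{(1)}\subseteq(-\infty,\rho_+]$. Take a codimension-one hyperplane $L_E(\svect)+k_E+c=0$ and its diagonal point $t=-(k_E+c)/L_E(\one)$. I would argue, via the specialization of Step 2 applied generically along $H$, that $t$ is a root of the restricted ideal, giving $t\le\rho_+$. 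Together with negativity from Step 1 ($t<0$ because $k_E+c>0$), this gives $S\subseteq[\rho_-,\rho_+]\subset\mathbb Q_{<0}$ and $\max S=\rho_+$. The delicate point throughout is controlling the specialization kernel in Step 2.
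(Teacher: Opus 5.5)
Your inclusion $Z(b_f)\subseteq S$ and your reduction of the upper bound to the diagonal slice of $W^{(1)}_{\one}$ via Theorem~\ref{thm:intro-positive-translation} match the paper. Getting finiteness, rationality and negativity of $S$ from the hyperplanes $L_E(\svect)+k_E+c=0$ (with $L_E(\one)>0$, $c\geq1$) plus positive translation is a legitimate substitute for the paper's Sabbah--Gyoja argument. What you actually need is $\Delta\not\subseteq Z(B_F^{\one})$; the claim that no $b\in B_F^{\one}$ vanishes on $\Delta$ is false for $r\geq2$.

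The real gap is Step~2, which is the heart of the statement and is not proved. Your filtration idea cannot give it. The pieces $\Dmod_X[\svect]F^{\svect+k\one}/\Dmod_X[\svect]F^{\svect+(k+1)\one}$ are translates of $Q_F$ itself, and a surjection onto $Q_f$ only bounds the support from \emph{below}. What is needed is an upper bound for the kernel $K$ of $Q_F/I_\Delta Q_F\to Q_f$. The paper proves $\Supp_R K\subseteq\bigcup_{m>0}(Z(B_F^{\one})+m\one)$ (Lemma~\ref{lem:diagonal-kernel-support}). It writes generators of $K$ as $h(\svect)f^{-d}F^{\svect}+\nabla_F(W)$ with $h\in I_\Delta$, and kills them with $\prod_{j=1}^{d}b_j(\svect-j\one)$, $b_j\in B_F^{\one}$. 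Two further ingredients are then indispensable.

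(i) Where $K$ vanishes locally you only get $Q_F\otimes_R R/\mathfrak m_{t\one}\simeq Q_f\otimes_{\Cring[s]}\Cring[s]/(s-t)$. To deduce $b_f(t)=0$ from $t\one\in Z(B_F^{\one})$, you need this fiber to be nonzero. Since $Q_F$ is not finitely generated over $R$, Nakayama does not apply. This is van der Veer's fiber nonvanishing theorem for relative holonomic modules (Theorem~\ref{thm:fiber-nonvanishing}), which your ``specialize and compare'' silently assumes.

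(ii) The kernel bound is in terms of translates of $Z(B_F^{\one})$, not of $Z(b_f)$, so a descent is needed. Take the least element $\rho$ of the finite set $S\cap(t-\mathbb Z_{\geq0})$. If $b_f(\rho)\neq0$, then $\rho\one\in Z(B_F^{\one})+m\one$ for some $m>0$, so $\rho-m\in S$, a contradiction.

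Your alternative via Theorem~\ref{thm:intro-positive-translation} goes the wrong way. It moves $t$ \emph{up} to $t+k$, whereas $t=\rho+m$ requires moving \emph{down} to a root. Also, the numbers $-(k_E+c)/L_E(\one)$ are only candidate roots of $b_f$, not roots.

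The upper bound in Step~3 is circular. Showing that the diagonal point of a codimension-one component $H$ is a zero of the restricted ideal only says it lies in $S$, which is the set being bounded. The missing idea is the log canonical threshold. Put $N_E=L_E(\one)=\operatorname{ord}_E(f\circ\mu)$ and recall $c\geq1$. The diagonal point of $H$ is $-(k_E+c)/N_E\leq-(k_E+1)/N_E\leq-\operatorname{lct}_X(f)$, by the resolution formula $\operatorname{lct}_X(f)=\min_{E}(k_E+1)/N_E$. Moreover $-\operatorname{lct}_X(f)=\rho_+$, because the largest root of $b_f$ is $-\operatorname{lct}_X(f)$ \cite[Theorem 2]{BudurMustataSaito2006}. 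Combined with your (correct) step $t\leq t+k$, this gives $S\subseteq(-\infty,\rho_+]$. Without identifying $\rho_+$ with $-\operatorname{lct}_X(f)$, nothing connects the hyperplane data to $\rho_+$.
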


The largest root satisfies $\rho_+=-\operatorname{lct}_X(f)$ ( see \cite[Theorem 2]{BudurMustataSaito2006}). Thus the smallest closed real interval containing the diagonal slice is exactly the interval between the smallest root of $b_f$ and the negative log canonical threshold of $f$.

\subsection{Proof strategy}
The upper bound is obtained by comparing a finite diagonal slice of $Z(B_F^{\one})$ with the roots of $b_f$. Put
\[
\Delta=\{t\one:t\in\Cring\},\qquad
\nabla(\svect)=\svect+\one.
\]
The diagonal specialization theorem of van der Veer \cite[Theorem F]{vanderVeer2021} states, for tuples on affine space, that
\[
\bigl(Z(B_F^{\one})\cap\Delta\bigr)
\setminus\bigcup_{m>0}\nabla^m\bigl(Z(B_F^{\one})\bigr)
\subseteq Z(b_f),
\]
where $Z(b_f)$ is identified with a subset of $\Delta$. In Section~\ref{sec:specialization}, we extend this statement to smooth affine varieties using relative Kashiwara equivalence and van der Veer's fiber nonvanishing theorem\cite[Theorem E]{vanderVeer2021}. Finiteness of $S$ then shows that every $t\in S$ is a nonnegative integral translate of a root of $b_f$. This proves the lower inequality in Theorem~\ref{thm:intro-diagonal-interval}. Combining it with~\eqref{eq:intro-root-bound} and a filtration by unit shifts gives~\eqref{eq:main-upper-bound} for every $\mathbf a$.

To prove Theorem~\ref{thm:intro-positive-translation}, we use the maximal tame pure extension of $\Dmod_X[\svect]F^{\svect}$ from \cite[\S4.4]{BvdVWZ21b}. Its quotient by the $\mathbf a$-shift has parameter support equal to $W_{\mathbf a}^{(1)}$. Stabilization of the kernels of the induced semilinear shift on the extension quotient gives the finite covering~\eqref{eq:intro-positive-cover}. Applying this covering with $\mathbf a=\one$ moves each diagonal point in the positive direction to a log-resolution hyperplane. The bound $c\geq1$ then gives $t\leq-\operatorname{lct}_X(f)$, proving the upper inequality in Theorem~\ref{thm:intro-diagonal-interval}. Finally, specializing functional equations gives $Z(b_f)\subseteq S$, so both endpoints are attained.

We also give a proof of Theorem~\ref{Theorem:1.1} using localized relative $\Dmod$-modules. Let $j:U=X\setminus D\hookrightarrow X$. For a prime ideal $\mathfrak p\subset R$, the maximal and minimal extensions of $\mathcal O_U[\svect]_{\mathfrak p}F^{\svect}$ are coherent, and their quotient is
\[
\Psi_{F,\mathfrak p}(\mathcal O_X)
=\frac{j_*(\mathcal O_U[\svect]_{\mathfrak p}F^{\svect})}
        {j_!(\mathcal O_U[\svect]_{\mathfrak p}F^{\svect})}
=\left(\frac{\Dmod_X[\svect]F^{\svect-k\one}}
              {\Dmod_X[\svect]F^{\svect+k\one}}\right)_{\mathfrak p}
\quad(k\gg0).
\]
These extensions were studied in \cite{Wu2022,Wu26} and the construction is a relative analogue of the algebraic approach to nearby cycles in \cite[\S4.2]{BeilinsonBernstein1993}. On a log resolution, the cyclic lattice generated by $F_Y^{\svect}\mu^*(dx)$ agrees with the minimal extension away from the hyperplanes in Theorem~\ref{Theorem:1.1}. Proper direct image along $\mu$ then places the original generator $F^{\mathbf s}dx$ in the minimal extension on \(X\), forcing the relevant localized shift quotient to vanish.

\subsection{Monodromy zeta functions}
The same localized extensions also give a description of monodromy zeta functions in terms of characteristic cycles. Let
\[
A=\Cring[t_1^{\pm1},\ldots,t_r^{\pm1}],\qquad
\mathbb T=\Spec A,
\]
and let $\psi_F(\Cring_X)$ be the Sabbah specialization complex. We denote its support in $\mathbb T$ by $S(F)$. In Section~\ref{monodromy}, we use 
$\psi_F(\Cring_X)$
and its divisor-valued stalk Euler characteristic $\chi_{\mathrm{st}}^A(\psi_F(\Cring_X))$.

For $x\in X$, let $\zeta_{F,x}^{\mathrm{mon}}$ denote the rational function, defined up to a unit of $A$, whose divisor is $\chi_{\mathrm{st}}^A(\psi_F(\Cring_X))(x)$. Write $PZ(\zeta_{F,x}^{\mathrm{mon}})$ for the union of its zero and pole divisors. The local index comparison in \cite[\S1.3]{Wu26},
when expressed through the characteristic-cycle isomorphism,
yields a new proof of the following results.

\begin{theorem}\label{thm:intro-monodromy}
With the preceding normalization,
\[
S(F)=\bigcup_{x\in X}PZ(\zeta_{F,x}^{\mathrm{mon}}).
\]
Moreover, for every $x\in X$,
\begin{equation}\label{eq:intro-acampo}
\zeta_{F,x}^{\mathrm{mon}}(\tvect)
\doteq\prod_{E\in\mathcal E}
\bigl(\tvect^{N_E}-1\bigr)^{-\chi_c(E^\circ\cap\mu^{-1}(x))},
\end{equation}
where $\doteq$ denotes equality up to a unit of $A$, $N_E=(N_{E,1},\ldots,N_{E,r})$, $E^\circ=E\setminus\bigcup_{E'\neq E}E'$.
\end{theorem}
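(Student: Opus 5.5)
We will prove Theorem~\ref{thm:intro-monodromy} by working entirely on the $\Dmod$-module side and then translating through the Riemann--Hilbert / characteristic-cycle dictionary. The relative analogue of the Beilinson--Bernstein nearby cycle construction says that the localized quotient $\Psi_{F}(\mathcal O_X)=j_*(\mathcal O_U[\svect]F^{\svect})/j_!(\mathcal O_U[\svect]F^{\svect})$ corresponds, after relative de Rham and $t_i=\exp(-2\pi i s_i)$, to the Sabbah specialization complex $\psi_F(\Cring_X)$. This is assumed from \cite{Wu26}, where the local index comparison is stated. We will therefore realize $\chi^A_{\mathrm{st}}(\psi_F(\Cring_X))(x)$ as a divisor on $\mathbb T$ obtained from the relative characteristic cycle $\CCrel(\Psi_F(\mathcal O_X))$.

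Concretely, the relative characteristic cycle is a finite sum $\sum_\Lambda \Lambda\times D_\Lambda$. Here $\Lambda$ ranges over conic Lagrangians $T^*_{Z}X$, and the $D_\Lambda$ are effective divisors in $\Spec R$. By the codimension-one statement of Theorem~\ref{Theorem:1.1}, these divisors are supported on hyperplanes $L_E(\svect)+k_E+c=0$. Pushing forward along $\Exp$ turns each such divisor into a multiple of $\Div(\tvect^{N_E}-1)$.

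The local index formula of Kashiwara (stalk Euler characteristic equals the intersection of the characteristic cycle with the local Euler obstructions, $\chi(\psi)(x)=\sum_\Lambda (-1)^{\dim}\mathrm{Eu}_{\overline{\pi(\Lambda)}}(x)\,D_\Lambda$) holds in the relative setting with divisor coefficients. Applying it gives $\chi^A_{\mathrm{st}}(\psi_F)(x)$ as an integer combination of the $\Exp_*D_\Lambda$.

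For the first assertion, note that $S(F)$ is the support of $\psi_F$, which equals $\Exp$ of the support of $\Psi_F(\mathcal O_X)$ in $\Spec R$. This support is the union of the $D_\Lambda$ by coherence and purity of the localized extension quotients. The inclusion $\supseteq$ is immediate, since a nonzero stalk divisor is supported in $S(F)$. For $\subseteq$ we argue as follows. Given a component $\Exp(D)$ of $S(F)$, choose $\Lambda=T^*_ZX$ with $D\subseteq D_\Lambda$ and $Z$ maximal, i.e.\ of smallest codimension among those carrying $D$. Then take $x$ a general point of $Z$. At such $x$, only $\Lambda$ contributes with $\mathrm{Eu}=1$, while the other $\Lambda'$ carrying $D$ have $x\notin\pi(\Lambda')$ by genericity. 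Hence the coefficient of $\Exp(D)$ in the stalk divisor is nonzero.

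This is the step I expect to be the \textbf{main obstacle}. The difficulty is arranging that cancellation between distinct $\Lambda$ cannot occur: the ordering by dimension of $Z$ must really isolate one term. I would try to handle it by localizing at the generic point of $D$, so that $R_{\mathfrak p}$ is a DVR and the coefficients become honest lengths. I would then argue by induction on a Whitney stratification adapted to $D$.

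For the A'Campo formula we use that $\psi_F$ commutes with proper pushforward, $\psi_F(\Cring_X)\simeq R\mu_*\psi_{F\circ\mu}(\Cring_Y)$, so $\chi^A_{\mathrm{st}}$ at $x$ is the fiber integral $\int_{\mu^{-1}(x)}\chi^A_{\mathrm{st}}(\psi_{F\circ\mu})\,d\chi_c$. It remains to compute on $Y$, where $F\circ\mu$ is locally monomial $\prod_E y_E^{N_E}$ times units.
\begin{itemize}
\item At a point of $E^\circ$, the local specialization complex has stalk the cohomology of a torus fibration, giving divisor $-\Div(\tvect^{N_E}-1)$. Equivalently, the localized quotient is $\Cring[\svect]/(L_E(\svect)+k_E+c)$ up to shifts, with $\mathrm{Eu}=1$.
\item At points on two or more components, the Milnor fiber is a product with a torus factor, so the contribution vanishes.
\end{itemize}
Additivity of $\chi_c$ over the stratification $\mu^{-1}(x)=\bigsqcup_E (E^\circ\cap\mu^{-1}(x))\sqcup(\text{deeper strata})$ then yields \eqref{eq:intro-acampo} up to a unit of $A$.
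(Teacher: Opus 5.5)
Your proposal is correct in substance, and for the first assertion it follows the paper's argument. The relative local index comparison of \cite{Wu26} (Lemma~\ref{lem:mono:index}) writes $\varphi_{\mathfrak q}$ as $\pm\CC^{-1}$ of the coefficient cycle $\Lambda_{\mathfrak p(\mathfrak q)}$ of an active representative, which exists by Theorem~\ref{thm:support-Bernstein-Sato}. That cycle is effective and nonzero.

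What you flag as the main obstacle is not one. Choose a maximal-dimensional $Z$ with nonzero multiplicity and a general point $x\in Z$. Then a single term survives: $\operatorname{Eu}_Z(x)=1$, every other contributing closure has dimension $\leq\dim Z$ and so misses $x$, and the multiplicities are positive lengths. This is exactly the triangularity behind the injectivity of $\CC$, which is all the paper invokes, so no induction on strata is needed.

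For the A'Campo formula your route genuinely differs from the paper's. You share the proper base change and fiberwise $\chi_c$-integration step (Theorem~\ref{lem:mono:proper}), but then compute the stalks of $\psi_{F_Y}(\Cring_Y)$ topologically. The paper stays on the $\Dmod$-module side. It computes $\CCrel$ of $\Psi_{F_Y,\mathfrak p}(\mathcal O_Y)$ from cyclic presentations in normal-crossing charts, assembles $\Theta_{F_Y}$, and inverts $\CC$ by inclusion--exclusion (Theorem~\ref{thm:mono:normal-crossings}). Your route is the classical, more elementary one and makes the second assertion independent of the index comparison. The paper's route is what makes this a $\Dmod$-theoretic proof, and it yields the explicit cycles $\Lambda^Y_{\mathfrak p}$ as a by-product.

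Two justifications need repair.

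First, at a point on $k\geq2$ components, the vanishing is not due to a torus factor in a Milnor fiber; for $r>1$ there may be none. For $F=(y_1,y_2)$ at the origin, the stalk is $A/(t_1-1,t_2-1)$ in degree $2$, supported in codimension two. The correct argument is as follows.
\begin{enumerate}
\item The punctured neighborhood is homotopic to $(S^1)^k$.
\item The stalk is the Koszul complex of $A$ on the nonzero elements $\tvect^{N_{E_1}}-1,\ldots,\tvect^{N_{E_k}}-1$.
\item After localizing at a height-one $\mathfrak q$, this complex is either acyclic or a Koszul complex on $k\geq2$ elements of the maximal ideal of the discrete valuation ring $A_{\mathfrak q}$. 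In the latter case its length Euler characteristic vanishes.
\end{enumerate}
For $k=1$ the same computation gives $H^0=0$ and $H^1=A/(\tvect^{N_E}-1)$, hence the divisor $-\operatorname{div}(\tvect^{N_E}-1)$ on $E^\circ$, as you claim. Your ``equivalently'' remark about the shape of the localized quotient is not accurate and should be dropped.

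Second, without localization $j_*(\mathcal O_U[\svect]F^{\svect})$ is not coherent. The support of the quotient is then an infinite union of integral translates, so there is no finite cycle $\sum_\Lambda\Lambda\times D_\Lambda$ to push along $\Exp$. Moreover, a hyperplane $L\cdot\svect+\alpha=0$ maps onto the single prime divisor $V(\tvect^L-e^{2\pi\sqrt{-1}\alpha})$. That is one factor of $\tvect^{N_E}-1$, not a multiple of $\operatorname{div}(\tvect^{N_E}-1)$. Localize at height-one primes from the outset and take one representative per $\mathfrak q$; this is legitimate by Proposition~\ref{prop:mono:translation}.
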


For $r=1$, the first assertion is due to Denef
\cite[Lemma 4.6]{Den93}, while the second is the classical
A'Campo formula \cite{AC75}. In the multivariable case,
the first assertion was proved by Budur, Liu, Saumell, and Wang
\cite[Theorem 1.3]{BLSW17}, and the second is the multivariable
A'Campo formula; see \cite{S90}.

\subsection{Organization}
The paper is organized as follows.
Section~\ref{RELD} collects the results on relative
$\Dmod$-modules used in the proofs.
Section~\ref{lower} proves Theorem~\ref{Theorem:1.1}.
Section~\ref{sec:specialization} establishes the diagonal
specialization result for smooth affine varieties and proves
Theorems~\ref{Theorem:1.2},
\ref{thm:intro-positive-translation}, and
\ref{thm:intro-diagonal-interval}.
Section~\ref{monodromy} develops the divisor-valued formulation
and proves Theorem~\ref{thm:intro-monodromy}.

\begin{AC}
We would like to thank our advisor, Lei Wu, for introducing us to this topic and for his valuable suggestions and guidance.
\end{AC}
\begin{AS}
One of the results presented in this paper,
Theorem~\ref{Theorem:1.2}, was obtained with assistance
from an automated reasoning system.
Danus~\cite{LiuEtAl2026Danus}, running GPT-5.6 Sol,
generated proofs of Proposition~\ref{Proposition:1} and
Lemma~\ref{lem:unit-shift-bound} and proposed how these
two results could be combined to establish the theorem.
We subsequently verified all mathematical arguments
and revised the proofs and their presentation.
We take full responsibility for the mathematical content
and the final version of this paper.
\end{AS}

\section{Relative \texorpdfstring{$\Dmod$}{D}-modules}\label{RELD}

We recall the relative $\Dmod$-module formalism needed below, following \cite{Wu2022}. Unless otherwise stated, $X$ is smooth of dimension $n$, and $R$ is a regular commutative noetherian $\Cring$-algebra which is an integral domain of finite Krull dimension. The coefficient rings used below are polynomial rings over $\Cring$ and their localizations. Put $\Amod_R=\Dmod_X\otimes_{\Cring}R$. All modules are left modules unless a right action is indicated.

\subsection{Characteristic varieties and parameter support}
A coherent $\Amod_R$-module $\Mmod$ admits locally a good filtration for the relative order filtration
\[
F_k^{\mathrm{rel}}\Amod_R=F_k\Dmod_X\otimes_{\Cring}R.
\]
The elements of $R$ have degree zero. The associated graded module is coherent over $\gr\Dmod_X\otimes_{\Cring}R$ and determines a coherent sheaf on $T^*X\times\Spec R$. Its support is the \emph{relative characteristic variety}
\[
\Chrel(\Mmod)=\Supp(\gr^{\mathrm{rel}}\Mmod).
\]
The corresponding cycle, with multiplicities given by lengths at generic points, is denoted by $\CCrel(\Mmod)$. Both are independent of the chosen good filtration.

\begin{definition}
A coherent $\Amod_R$-module $\Mmod$ is \emph{relative holonomic} if every irreducible component of $\Chrel(\Mmod)$ is of the form $\Lambda\times S$, where $\Lambda\subseteq T^*X$ is an irreducible conic Lagrangian subvariety and $S\subseteq\Spec R$ is an irreducible closed subvariety.
\end{definition}

Relative holonomic modules form an abelian category \cite[\S3.2]{BudurVanDerVeerWuZhou2021}. For a coherent $\Amod_R$-module $\Mmod$, write
\[
B_{\Mmod}=\Ann_R(\Mmod),\qquad
\Supp_R(\Mmod)=\{\mathfrak p\in\Spec R:\Mmod_{\mathfrak p}\neq0\}.
\]
Let $p_2:T^*X\times\Spec R\to\Spec R$ be the projection.

\begin{lemma}[{\cite[Lemma 3.4.1]{BudurVanDerVeerWuZhou2021}}]\label{lem:relative-support}
For a relative holonomic $\Amod_R$-module $\Mmod$,
\[
Z(B_{\Mmod})=p_2\bigl(\Chrel(\Mmod)\bigr)=\Supp_R(\Mmod).
\]
\end{lemma}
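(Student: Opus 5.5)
The plan is to prove the two equalities in turn: first $Z(B_{\Mmod})=\Supp_R(\Mmod)$, which is general commutative algebra, and then $\Supp_R(\Mmod)=p_2(\Chrel(\Mmod))$, which uses relative holonomicity. Since $\Mmod$ is coherent over $\Amod_R$ and $X$ is quasi-compact, we can work on a finite affine cover. Everything is compatible with restriction to open sets, so we may assume $X$ is affine and $\Mmod$ is given by a finitely generated module $M$ over $D_X\otimes R$.

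\emph{Step 1: $\Supp_R(\Mmod)\subseteq Z(B_{\Mmod})$.} If $\mathfrak p\not\supseteq B_{\Mmod}$, pick $b\in B_{\Mmod}\setminus\mathfrak p$. Then $b$ acts as zero on $M$ and is invertible on $M_{\mathfrak p}$, so $M_{\mathfrak p}=0$.

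\emph{Step 2: $\Supp_R(\Mmod)=p_2(\Chrel(\Mmod))$.} Choose a good filtration $F_\bullet M$ with each $F_kM$ a coherent module over $\mathcal O_X\otimes R$. Localization at $\mathfrak p$ is exact and commutes with taking $\gr$, so
\[
\gr(M_{\mathfrak p})=(\gr M)_{\mathfrak p}.
\]
Since the filtration is exhaustive and bounded below, $M_{\mathfrak p}=0$ if and only if $(\gr M)_{\mathfrak p}=0$. The module $\gr M$ is finitely generated over $\gr D_X\otimes R$, and $(\gr M)_{\mathfrak p}=0$ exactly when no point of $\Supp(\gr M)$ maps under $p_2$ to a prime contained in $\mathfrak p$. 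Thus $\Supp_R(\Mmod)$ is the closure-under-specialization of $p_2(\Chrel(\Mmod))$.

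Relative holonomicity now gives closedness. Each component of $\Chrel$ has the form $\Lambda\times S$, and its image is $p_2(\Lambda\times S)=S$, which is closed because $\Lambda\neq\emptyset$. So $p_2(\Chrel)$ is a finite union of closed sets, hence closed, and $\Supp_R(\Mmod)=p_2(\Chrel(\Mmod))$.

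\emph{Step 3: $Z(B_{\Mmod})\subseteq\Supp_R(\Mmod)$.} This is the main obstacle. The module $M$ is not finitely generated over $R$, so the usual fact that the support of a finitely generated module is cut out by its annihilator does not apply directly. I will use the product structure instead. Take a good filtration and let $J=\Ann_{R}(\gr M)$. Because $\Supp(\gr M)=\bigcup\Lambda_i\times S_i$, the annihilator of $\gr M$ in $\gr D_X\otimes R$ has radical equal to the ideal of $\bigcup\Lambda_i\times S_i$. Intersecting with $R$ gives
\[
\sqrt J=\bigcap_i I(S_i),
\]
since each $\Lambda_i$ is nonempty. Hence some power $J^N$ annihilates $\gr M$.

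The remaining task is to lift this from $\gr M$ to $M$. Elements of $R$ have degree $0$ and are central, so if $b$ kills $\gr M$ then $bF_kM\subseteq F_{k-1}M$. This alone does not give nilpotence on $M$, because the filtration is unbounded above. To handle this, I would induct on a composition series or filtration of $M$ by relative holonomic pieces with irreducible supports, using that relative holonomic modules form an abelian category. For such a piece, apply the genericity argument of \cite{BudurVanDerVeerWuZhou2021}: if $\mathfrak p\notin p_2(\Chrel)$, then $M$ localized near $\mathfrak p$ vanishes. So $\Supp_R(\Mmod)$ is closed, equal to $V(I)$ for some ideal $I$, and we want $I^m M=0$.

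Failing a direct argument, I would simply invoke \cite[Lemma 3.4.1]{BudurVanDerVeerWuZhou2021}, which is exactly this statement. The key input there is that $\Ext^j_{\Amod_R}(\Mmod,\Amod_R)$ is relative holonomic and that $\Mmod$ is recovered by duality. This lets one transfer annihilation by $\sqrt J$ from the dual side, where it is controlled by the grade, back to $M$.
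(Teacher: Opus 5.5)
There is a genuine gap in Step~3, which carries the only nontrivial inclusion, $Z(B_\Mmod)\subseteq\Supp_R(\Mmod)$. Steps~1 and~2 are correct, and so is $\sqrt J=\bigcap_i I(S_i)$. The gap is that you then claim $b\,\gr\Mmod=0$ cannot be lifted to $\Mmod$ because the filtration is unbounded above. In its place you offer three things:
\begin{enumerate}
\item an induction over a ``composition series'', which does not exist in general, since relative holonomic modules need not have finite length (e.g.\ $\Dmod_X[\svect]F^{\svect}\supsetneq s_1\Dmod_X[\svect]F^{\svect}\supsetneq s_1^2\Dmod_X[\svect]F^{\svect}\supsetneq\cdots$);
\item an unspecified ``genericity argument'';
\item an appeal to \cite[Lemma 3.4.1]{BudurVanDerVeerWuZhou2021}, which is the statement itself.
\end{enumerate}
The last is what the paper does, since it quotes the lemma without proof. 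As a proof, however, it is circular, and it leaves your Steps~1--2 doing no work. The duality mechanism you attribute to the cited source is not needed either.

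The obstacle is not real, and your own Step~2 already contains the fix. Let $g\in\bigcap_iI(S_i)$. Then $g^N\gr\Mmod=0$ for some $N$, i.e.\ $g^NF_k\Mmod\subseteq F_{k-1}\Mmod$ for all $k$. Take a finite affine cover of $X$ and an open $V$ in it. A good filtration on $V$ has $F_k\Mmod(V)=0$ for $k<k_0$, and $\Mmod(V)$ is generated over $\Amod_R(V)$ by finitely many sections lying in some $F_d\Mmod(V)$. Hence $g^{N(d-k_0+1)}$ kills these generators. Since $R$ is central in $\Amod_R$, it therefore kills $\Mmod(V)$. Taking the largest exponent over the cover puts a power of $g$ in $B_\Mmod$. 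So $\bigcap_iI(S_i)\subseteq\sqrt{B_\Mmod}$, i.e.\ $Z(B_\Mmod)\subseteq\bigcup_iS_i=p_2(\Chrel(\Mmod))$, which closes your chain.

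Equivalently, run your Step~2 argument with $g$ inverted instead of localizing at $\mathfrak p$. This gives $\Mmod_g=0$, and finite generation over $\Amod_R$ plus centrality upgrades ``$g$-power torsion'' to ``killed by one power of $g$''. The relevant finiteness is over $\Amod_R$, not over $R$, so the unboundedness of the filtration is harmless.

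This argument in fact gives $Z(B_\Mmod)=\overline{p_2(\Chrel(\Mmod))}=\Supp_R(\Mmod)$ for every coherent $\Amod_R$-module. Relative holonomicity is used only to know that $p_2(\Chrel(\Mmod))$ is already closed.
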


The last equality requires care because $\Mmod$ need not be finitely generated over $R$. We use Lemma~\ref{lem:relative-support} whenever we pass between annihilators and parameter support.

Localization in $R$ is exact and commutes with taking a good filtration and its associated graded module. In particular,
\begin{equation}\label{eq:relative-localization}
\CCrel(\Mmod_{\mathfrak p})=\CCrel(\Mmod)_{\mathfrak p}.
\end{equation}
For a short exact sequence $0\to\Mmod'\to\Mmod\to\Mmod''\to0$,
\[
\Chrel(\Mmod)=\Chrel(\Mmod')\cup\Chrel(\Mmod'').
\]
When the nonzero terms have characteristic varieties of the same pure dimension, their characteristic cycles are additive. This is the situation in the localized normal-crossings computation in Section~\ref{sec:mono-nc}.

\subsection{Homological properties}
For a nonzero coherent $\Amod_R$-module $\Mmod$, its \emph{grade} is
\[
j(\Mmod)=\min\{k\geq0:
\mathcal Ext^k_{\Amod_R}(\Mmod,\Amod_R)\neq0\}.
\]
The module is \emph{$j$-pure} if every nonzero submodule has grade $j$, and it is \emph{$j$-Cohen--Macaulay} if
\[
\mathcal Ext^k_{\Amod_R}(\Mmod,\Amod_R)=0\qquad(k\neq j).
\]
We use the dimension formula
\begin{equation}\label{eq:grade-dimension}
j(\Mmod)+\dim\Chrel(\Mmod)=2n+\dim R
\end{equation}
and the fact that a $j$-Cohen--Macaulay module is $j$-pure; see \cite[Lemma 3.2.2 and Remark 3.3.2]{BudurVanDerVeerWuZhou2021}. We set $j(0)=+\infty$. The grade of a subquotient is at least the grade of the original module, by the corresponding inclusion of characteristic varieties. For a nonzero relative holonomic module, Lemma~\ref{lem:relative-support} and~\eqref{eq:grade-dimension} give
\begin{equation}\label{eq:grade-parameter-support}
j(\Mmod)=n+\operatorname{codim}_{\Spec R}\Supp_R(\Mmod).
\end{equation}

Let $\omega_X$ be the canonical bundle. Relative duality is defined by
\[
\mathbb D_{X,R}(\Mmod)
=R\mathcal Hom_{\Amod_R}(\Mmod,\Amod_R)
 \otimes_{\mathcal O_X}\omega_X^{-1}[n].
\]
An $n$-Cohen--Macaulay module has dual concentrated in degree zero.

For a morphism $g:X\to Y$ of smooth varieties, set $\Amod_{X,R}=\Dmod_X\otimes_{\Cring}R$ and define the transfer bimodule
\[
\Amod^R_{Y\leftarrow X}
=\omega_X\otimes_{\mathcal O_X}
 \bigl(\mathcal O_X\otimes_{g^{-1}\mathcal O_Y}g^{-1}\Amod_{Y,R}\bigr)
 \otimes_{g^{-1}\mathcal O_Y}g^{-1}\omega_Y^{-1}.
\]
The relative direct image is
\[
g_+\Mmod
=Rg_*\bigl(\Amod^R_{Y\leftarrow X}
                 \otimes^L_{\Amod_{X,R}}\Mmod\bigr).
\]

\begin{theorem}[{\cite[Theorem 2.7]{Wu2022}}]\label{thm:relative-duality-direct-image}
If $g:X\to Y$ is proper, then relative direct image commutes with duality:
\[
g_+\mathbb D_{X,R}(\Mmod)
\simeq\mathbb D_{Y,R}(g_+\Mmod)
\]
for coherent relative $\Dmod$-modules $\Mmod$.
\end{theorem}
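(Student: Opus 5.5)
The plan is to follow the classical argument for absolute $\Dmod$-modules (Hotta--Takeuchi--Tanisaki, Theorem 2.7.2) and carry the flat parameter ring $R$ along. The proof splits into three steps: construct a natural comparison morphism, reduce to a special class of modules by a way-out argument, and verify the isomorphism on that class. This works because $\Amod_{X,R}=\Dmod_X\otimes_{\Cring}R$ is flat over $\Dmod_X$. Also, since $R$ is regular of finite Krull dimension, $\Amod_{X,R}$ has finite global homological dimension. Consequently every coherent $\Amod_{X,R}$-module has, locally, a finite resolution by free modules, and globally a resolution by induced modules $\mathcal F\otimes_{\mathcal O_X}\Amod_{X,R}$ with $\mathcal F$ a coherent $\mathcal O_X\otimes R$-module.

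\textbf{Step 1: the comparison morphism.} Both sides are way-out functors on $D^b_{\mathrm{coh}}(\Amod_{X,R})$. I would build a natural morphism $g_+\mathbb D_{X,R}\Mmod\to\mathbb D_{Y,R}(g_+\Mmod)$ from a relative trace map
\[
\operatorname{Tr}_g: g_+\mathcal O_X\otimes R[\dim X-\dim Y]\to\mathcal O_Y\otimes R .
\]
This trace is the base change along $\Cring\to R$ of the classical trace. The classical trace in turn comes from Grothendieck duality for the proper morphism $g$ via the relative de Rham complex. The trace induces an adjunction morphism $g_+\mathbb D_{X,R}\to\mathbb D_{Y,R}\,g_+$, exactly as in the absolute case. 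All structure maps are $R$-linear, since $R$ is a constant coefficient ring.

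\textbf{Step 2: reduction by the way-out lemma.} Since both functors are way-out in both directions and compatible with distinguished triangles, it suffices to check the isomorphism on induced modules $\Mmod=\mathcal F\otimes_{\mathcal O_X}\Amod_{X,R}$. For these modules, $\mathbb D_{X,R}\Mmod$ is again induced, by $\mathcal F^\vee:=R\mathcal Hom_{\mathcal O_X\otimes R}(\mathcal F,\omega_X\otimes R)$, up to shift. Likewise $g_+\Mmod\simeq Rg_*(\mathcal F\otimes\omega_X^{-1}\otimes\omega_X)\otimes_{\mathcal O_Y}\Amod_{Y,R}$, up to the standard identifications. The assertion then reduces to the $\mathcal O$-module Grothendieck duality isomorphism
\[
Rg_*R\mathcal Hom(\mathcal F,g^!\mathcal G)\simeq R\mathcal Hom(Rg_*\mathcal F,\mathcal G)
\]
for the proper morphism $g\times\mathrm{id}:X\times\Spec R\to Y\times\Spec R$. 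Here $g^!$ of the dualizing object is $\omega_{X/Y}[\dim X-\dim Y]$, since $g\times\mathrm{id}$ is a proper morphism between smooth families over $\Spec R$.

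\textbf{Step 3 and the main obstacle.} The hardest part will be Step 1 together with its compatibility in Step 2. One must check that the trace-induced morphism agrees, on induced modules, with the Grothendieck duality isomorphism. One must also check that this identification is independent of choices and holds globally, not just locally. To handle this, I would factor $g$ as the graph embedding $X\hookrightarrow X\times Y$ followed by the projection $X\times Y\to Y$. The first factor is a closed embedding, for which the compatibility follows from a local Koszul-complex computation (relative Kashiwara equivalence). The second factor is a projection with proper fiber $X$, for which it follows from Serre duality on $X$ tensored with $R$. Transitivity of $g_+$ and of the trace maps then gives the result for general proper $g$.
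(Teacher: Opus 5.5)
The paper does not prove this statement; it imports it from \cite[Theorem 2.7]{Wu2022}, so your outline has to stand on its own. Steps 1 and 2 are the standard argument (trace morphism, way-out reduction to induced modules, Grothendieck duality for $g\times\mathrm{id}$ over $\Spec R$), and they do carry over to the coefficient ring $R$.

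The step that fails is the factorization in Step 3. The projection $X\times Y\to Y$ is proper only when $X$ is complete, and properness of $g$ does not give this. The identity of an affine variety, or any resolution of a singular affine variety, is proper with non-complete source. That is exactly the case the paper needs: the theorem is applied in Lemma~\ref{lem:global-section-trick} to the log resolution $\mu$ of an affine variety.

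For non-complete $X$ there is no Serre duality on $X$ to invoke. Moreover, for modules supported on the graph of $g$, the duality statement for the projection is literally the original statement for $g$. So the decomposition reduces nothing. The compatibility of your trace-induced morphism with Grothendieck duality, which you rightly single out as the crux, is left unproved.

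To repair this, compactify the source first. By Nagata's theorem and resolution of singularities, $X$ is an open subset of a smooth complete variety $\bar X$. Factor $g$ as $\gamma:X\to\bar X\times Y$, $x\mapsto(x,g(x))$, followed by the projection $p:\bar X\times Y\to Y$. Since $p\circ\gamma=g$ is proper and $p$ is separated, $\gamma$ is a proper immersion, hence a closed embedding. Now $p$ is smooth and proper.

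For $p$, the check on induced modules needs Grothendieck duality for $\bar X\times Y\times\Spec R\to Y\times\Spec R$ applied to arbitrary coherent sheaves. That is more than Serre duality on $\bar X$ tensored with $R$, but it holds, with trace obtained by base change from $H^{\dim\bar X}(\bar X,\omega_{\bar X})\to\Cring$. Together with the routine compatibility of the comparison morphisms under composition, this completes your argument.

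If you only want projective $g$, you can instead factor through $\mathbb P^m\times Y$ as Hotta--Takeuchi--Tanisaki do. That covers log resolutions built from blow-ups, but not an arbitrary proper $\mu$ as the statement allows.
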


For a closed embedding $i:X\hookrightarrow Y$, relative Kashiwara equivalence identifies coherent $\Amod_{X,R}$-modules with coherent $\Amod_{Y,R}$-modules supported on $X\times\Spec R$; see \cite[Theorem 1.5]{FS19}. In particular, $i_+$ is exact, faithful, and $R$-linear, preserves relative holonomicity, and commutes with specialization of the parameters. We will use these properties in Section~\ref{sec:specialization}.

\subsection{$\Dmod_X[\svect]$ modules}\label{Dxmod}
Return to the tuple $F$ of the introduction, and set $R=\Cring[\svect]$ and $j:U=X\setminus D\hookrightarrow X$. The free $j_*\mathcal O_U[\svect]$-module generated by $F^{\svect}$ carries a left $\Dmod_X[\svect]$-action determined by
\begin{equation}\label{eq:Fs-action}
v(F^{\svect})=\left(\sum_{i=1}^r s_i\frac{v(f_i)}{f_i}\right)F^{\svect}
\end{equation}
for a local vector field $v$.

For $\mathbf a,\mathbf b\in\mathbb Z^r$, inequalities are understood coordinatewise. If $\mathbf a\leq\mathbf b$, define
\[
M_F^{\mathbf a,\mathbf b}
=\frac{\Dmod_X[\svect]F^{\svect+\mathbf a}}
        {\Dmod_X[\svect]F^{\svect+\mathbf b}},\qquad
B_F^{\mathbf a,\mathbf b}=\Ann_R M_F^{\mathbf a,\mathbf b}.
\]
We write $B_F^{\mathbf b}=B_F^{\mathbf0,\mathbf b}$ and
\[
Q_F=M_F^{\mathbf0,\one},\qquad B_F^{\one}=\Ann_R Q_F.
\]
The notation $\mathbf a<\mathbf b$ means $\mathbf a\leq\mathbf b$ and $\mathbf a\neq\mathbf b$.

\Needspace{6\baselineskip}
\begin{theorem}[{\cite[Theorem 3.2]{Wu2022}}]\label{thm:relative-properties-Fs}
The following properties hold.
\begin{enumerate}
\item For every $\mathbf a\in\mathbb Z^r$, the module $\Dmod_X[\svect]F^{\svect+\mathbf a}$ is relative holonomic and $n$-pure, and
\[
\CCrel\bigl(\Dmod_X[\svect]F^{\svect+\mathbf a}\bigr)
=\CC(j_*\mathcal O_U)\times\Cring^r.
\]
\item For $\mathbf a\leq\mathbf b$, the module $M_F^{\mathbf a,\mathbf b}$ is relative holonomic, and
\[
\Supp_R M_F^{\mathbf a,\mathbf b}
=p_2\bigl(\Chrel(M_F^{\mathbf a,\mathbf b})\bigr)
=Z(B_F^{\mathbf a,\mathbf b}).
\]
\item If $\mathbf a<\mathbf b$ and $F^{\mathbf b-\mathbf a}$ is not invertible, then
\[
\dim\Chrel(M_F^{\mathbf a,\mathbf b})=n+r-1,
\qquad j(M_F^{\mathbf a,\mathbf b})=n+1.
\]
\end{enumerate}
\end{theorem}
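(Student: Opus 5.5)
The statement just before this point is Theorem~\ref{thm:relative-properties-Fs}, quoted from \cite[Theorem 3.2]{Wu2022}. I would prove it by reducing everything to the single module $\Dmod_X[\svect]F^{\svect}$. Multiplication by $F^{\mathbf a}$ is an injective map from the free $j_*\mathcal O_U[\svect]$-module generated by $F^{\svect}$ to itself. It commutes with the $\Dmod_X$-action up to the substitution $\svect\mapsto\svect+\mathbf a$, so $\Dmod_X[\svect]F^{\svect+\mathbf a}$ is isomorphic to $\Dmod_X[\svect]F^{\svect}$ twisted by this automorphism of $R$. Relative holonomicity, purity, and the cycle $\CC(j_*\mathcal O_U)\times\Cring^r$ are all stable under such a translation of the parameter space. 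This handles the dependence on $\mathbf a$ in (1).

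\textbf{Part (1).} For $\mathbf a=0$ I would use a log resolution $\mu:Y\to X$. On $Y$, where $F$ is locally a tuple of monomials times units, I would compute $\Dmod_Y[\svect]F_Y^{\svect}$ explicitly. Its relative characteristic variety is the union of the conormal bundles of the strata of the normal crossings divisor, times $\Cring^r$, with multiplicity one, which is $\CC(j_*\mathcal O_{U})\times\Cring^r$ on $Y$. Purity comes from the local presentation: there the module is a quotient of $\Amod_R$ by a regular sequence of symbols, hence $n$-Cohen--Macaulay. To return to $X$, I would use the proper direct image $\mu_+$ together with Theorem~\ref{thm:relative-duality-direct-image}. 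The aim is to show that $\Dmod_X[\svect]F^{\svect}$ is a subquotient of $\mathcal H^0\mu_+$, which gives relative holonomicity, and that its generic fibre is $j_*\mathcal O_U(\svect)$. The generic fibre over $\Cring(\svect)$ determines the cycle: every component has the form $\Lambda\times\Cring^r$, because a component $\Lambda\times S$ with $S$ proper would force a torsion submodule, which purity excludes. I would get purity from the fact that $\Dmod_X[\svect]F^{\svect}$ embeds into the $R$-torsion-free module $j_*\mathcal O_U[\svect]F^{\svect}$, together with the relation \eqref{eq:grade-parameter-support}: a nonzero submodule of grade $>n$ would have proper parameter support and hence be $R$-torsion.

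\textbf{Part (2).} Relative holonomic modules form an abelian category, so $M_F^{\mathbf a,\mathbf b}$ is relative holonomic as a quotient of relative holonomic modules. The equalities of supports then follow directly from Lemma~\ref{lem:relative-support}.

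\textbf{Part (3).} Both $\Dmod_X[\svect]F^{\svect+\mathbf a}$ and $\Dmod_X[\svect]F^{\svect+\mathbf b}$ have the same generic cycle, so $M_F^{\mathbf a,\mathbf b}$ is $R$-torsion. Hence its parameter support is proper and $j\ge n+1$. For the reverse inequality, the one-variable specialization argument shows $B_F^{\mathbf a,\mathbf b}\neq0$; Bernstein--Sato existence gives a nonzero element. Then Theorem~\ref{thmbvdwz} supplies codimension-one components whenever $F^{\mathbf b-\mathbf a}$ is not invertible, using nonemptiness of the zero locus, which follows from non-invertibility at a point of $D$. Thus $\operatorname{codim}\Supp_R=1$, and \eqref{eq:grade-parameter-support} together with \eqref{eq:grade-dimension} give $j=n+1$ and $\dim\Chrel=n+r-1$.

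\textbf{Main obstacle.} The hardest step is purity in (1). Descending Cohen--Macaulayness along $\mu_+$ is not automatic, so I would first try the torsion-freeness argument above and only fall back on duality if it fails.
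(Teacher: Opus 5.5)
The paper gives no proof of this statement; it imports it from \cite{Wu2022}. Against what that citation has to supply, your sketch has a genuine gap in the cycle formula of (1).

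\textbf{The generic fibre is twisted.} The generic fibre of $\Dmod_X[\svect]F^{\svect}$ is not $j_*\mathcal O_U\otimes_{\Cring}\Cring(\svect)$. It is the twisted module $j_*\bigl(\mathcal O_U(\svect)F^{\svect}\bigr)$, that is, $\mathcal O_X[f^{-1}]\otimes_{\Cring}\Cring(\svect)$ with connection $d+\sum_i s_i\,d\log f_i$. These are different $\Dmod$-modules. Already for $f=x$ on $\mathbb A^1$, the first contains the proper submodule $\Cring(s)[x]$, while the second is simple.

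So ``the generic fibre determines the cycle'' only reduces (1) to the equality $\CC\bigl(j_*(\mathcal O_U(\svect)F^{\svect})\bigr)=\CC(j_*\mathcal O_U)$. That equality is the actual content of the formula, and nothing in your sketch proves it. The normal-crossings computation on $Y$ does not transfer either: exhibiting $\Dmod_X[\svect]F^{\svect}$ as a subquotient of $\mathcal H^0\mu_+$ bounds its characteristic variety from above but does not compute its cycle.

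\textbf{What is needed.} You need an index-theoretic input. One route:
\begin{enumerate}
\item Pass to a very general $\alpha\in\Cring^r$. There $\gr^{\mathrm{rel}}$ of a good filtration is $R$-flat and the fibre is $\mathcal O_X[f^{-1}]F^{\alpha}$.
\item This regular holonomic module has de Rham complex $Rj_*L_\alpha[n]$ with $L_\alpha$ of rank one.
\item Its stalk Euler characteristics are $\chi(B_x\cap U;L_\alpha)=\chi(B_x\cap U)$, the same as those of $Rj_*\Cring_U$.
\item Kashiwara's index theorem then gives $\CC(\mathcal O_X[f^{-1}]F^{\alpha})=\CC(j_*\mathcal O_U)$.
\end{enumerate}
Alternatively, invoke Ginsburg's theorem, or push the explicit cycle on $Y$ forward with the direct-image formula for characteristic cycles.

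\textbf{Smaller points.}
\begin{enumerate}
\item The claim that a component $\Lambda\times S$ with $S$ proper ``would force a torsion submodule'' is not elementary. It is the Gabber--Bj\"ork theorem that a $j$-pure module has equidimensional characteristic variety, and should be cited as such.
\item For relative holonomicity via $\mu_+$, Theorem~\ref{thm:relative-duality-direct-image} does not help. You need two things:
\begin{enumerate}
\item stability of relative holonomicity under proper direct image;
\item an explicit morphism from $\mathcal H^0\mu_+\bigl(F_Y^{\svect}\mu^*(dx)\cdot\Dmod_Y[\svect]\bigr)$ to the maximal extension whose image contains $F^{\svect}dx$, as in the proof of Lemma~\ref{lem:global-section-trick}.
\end{enumerate}
\item The purity step you flagged as hardest is fine as written. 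Torsion-freeness and \eqref{eq:grade-parameter-support} force every nonzero submodule to have grade $n$.
\item In (3), appealing to Theorem~\ref{thmbvdwz} is legitimate as a black box but far heavier than needed. It suffices to work at a general point of a component $D_1$ of $D$ along which $F^{\mathbf b-\mathbf a}$ vanishes. There the local Bernstein--Sato ideal is computed as in Example~\ref{ex:curve}. Its zero locus contains the hyperplane $L(\svect)+L(\mathbf a)+1=0$, where $L(\svect)=\sum_i\operatorname{ord}_{D_1}(f_i)s_i$. Since $B_F^{\mathbf a,\mathbf b}$ is contained in the local ideal, $Z(B_F^{\mathbf a,\mathbf b})$ contains this hyperplane too.
\end{enumerate}
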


For $\boldsymbol\beta\in\mathbb Z^r$, put
\[
\tau_{\boldsymbol\beta}(h)(\svect)=h(\svect+\boldsymbol\beta).
\]
Substitution of the parameters gives
\begin{equation}\label{eq:parameter-translation}
B_F^{\mathbf a+\boldsymbol\beta,\mathbf b+\boldsymbol\beta}
=\tau_{\boldsymbol\beta}(B_F^{\mathbf a,\mathbf b}),\qquad
Z\bigl(\tau_{\boldsymbol\beta}(B_F^{\mathbf a,\mathbf b})\bigr)
=Z(B_F^{\mathbf a,\mathbf b})-\boldsymbol\beta.
\end{equation}
This convention will be used for all translations of ideals.

\begin{theorem}[Sabbah--Gyoja; {\cite[Theorem 3.4]{Wu2022}}]\label{thm:Sabbah-Gyoja}
The ideal $B_F^{\one}$ contains a nonzero polynomial which is a finite product of affine linear forms $L\cdot\svect+\alpha$, with $L\in\mathbb Z_{\geq0}^r\setminus\{0\}$ and $\alpha\in\mathbb Q$. Moreover, the polynomial may be chosen with $\alpha>0$ for every factor.
\end{theorem}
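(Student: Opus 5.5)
The statement to prove is the Sabbah--Gyoja theorem (Theorem~\ref{thm:Sabbah-Gyoja}). My plan is to prove it first locally in the analytic category, then globalize using quasi-compactness of the affine variety $X$.

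\textbf{Step 1: nonvanishing and the functional equation near a point.} By Theorem~\ref{thm:relative-properties-Fs}(3), $Q_F=M_F^{\mathbf0,\one}$ has grade $n+1$. By \eqref{eq:grade-parameter-support}, $\Supp_R Q_F$ then has codimension at least one in $\Cring^r$, so $B_F^{\one}\neq0$. This gives nonvanishing but not the product form. For the product form I will work locally at a point $x\in D$ and follow Sabbah's argument. Consider the $\Dmod_X[\svect]$-module $\Dmod_X[\svect]F^{\svect}$ together with the commuting family of shift operators $t_i: F^{\svect}\mapsto f_iF^{\svect}$. Take the $V$-filtration along the graph embedding $X\to X\times\Cring^r$, for the multi-indexed filtration attached to each direction $L\in\mathbb Z_{\ge0}^r$. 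Existence of Bernstein--Sato polynomials for these multi-filtrations (Sabbah's multi-filtration theorem) yields, locally near $x$, a relation
\[
\prod_{L\in\Sigma}\prod_{\alpha\in A_L}(L\cdot\svect+\alpha)\,F^{\svect}\in\Dmod_X[\svect]F^{\svect+\one},
\]
with $\Sigma\subset\mathbb Z_{\geq0}^r\setminus\{0\}$ and each $A_L$ a finite subset of $\mathbb C$. Alternatively, the same relation comes from Gyoja's route: pull back to a log resolution, where the normal crossing computation is explicit and gives linear forms $L_E(\svect)+k_E+j$ with $j\ge1$, and then push forward using properness of $\mu$ and direct-image compatibility (Theorem~\ref{thm:relative-duality-direct-image}).

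\textbf{Step 2: rationality and positivity of the constants.} I expect this to be the main obstacle. I plan to use the log resolution $\mu$. On $Y$ with $F\circ\mu$ a monomial times a unit, a direct computation shows $\prod_E\prod_{j=1}^{L_E(\one)}(L_E(\svect)+k_E+j)$ lies in the relevant annihilator. The difficulty is transferring this from $Y$ to $X$. The trace/direct-image argument only gives a statement after further shifts, i.e.\ for $\Dmod_X[\svect]F^{\svect+m\one}$. So I would first obtain that $Q_F$ is annihilated by a product $\prod(N_E\cdot\svect+k_E+j)$ up to translates $\svect\mapsto\svect+m\one$. The resulting factors have the form $N_E\cdot\svect+\alpha$ with $\alpha\in\mathbb Q$. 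Since $N_E\in\mathbb Z_{\ge0}^r\setminus\{0\}$ and $k_E+j+mL_E(\one)$ is rational, this gives the first assertion of the theorem.

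\textbf{Step 3: making every $\alpha$ positive.} Here I would iterate the functional equation. If $b(\svect)F^{\svect}\in\Dmod_X[\svect]F^{\svect+\one}$, then substituting $\svect\mapsto\svect+m\one$ gives $b(\svect+m\one)F^{\svect+m\one}\in\Dmod_X[\svect]F^{\svect+(m+1)\one}$. A factor with $\alpha\le0$ becomes harmful only if it obstructs the chain. I would instead invoke the relative-holonomic argument: $Q_F$ is $(n+1)$-pure, so its support is a finite union of hyperplanes. Each such hyperplane $L\cdot\svect+\alpha=0$ has $\alpha>0$, because on the open locus where all $s_i$ have large positive real part, $F^{\svect}$ specializes to a function generating the same module as $F^{\svect+\one}$ (the $b$-function argument in one variable, via the diagonal restriction). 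I would take $b$ to be the product of the radicals of these hyperplanes raised to a high power; this product lies in $B_F^{\one}$ by Lemma~\ref{lem:relative-support} and the Nullstellensatz. Finally, since $X$ is affine, finitely many local charts suffice, and the product over charts gives the global polynomial.
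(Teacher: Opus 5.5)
The paper does not prove this statement; it quotes it from \cite{Wu2022}, crediting the product form with rational constants to Sabbah and positivity to Gyoja. Your Step~1 likewise just invokes Sabbah's multi-filtration theorem, so the substance lies in Steps~2--3. \textbf{Step~3 does not work.}

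First, Theorem~\ref{thm:relative-properties-Fs}(3) gives only $j(Q_F)=n+1$, not $(n+1)$-purity. The paper explicitly allows components of $Z(B_F^{\one})$ of codimension $\geq2$ (Theorem~\ref{thmbvdwz}(2)); that is why Proposition~\ref{prop:positive-translation} has to pass to the maximal tame pure extension. So ``its support is a finite union of hyperplanes'' is unjustified.

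More seriously, the positivity argument carries no information. A region where all $\operatorname{Re}s_i$ are sufficiently large avoids any finite collection of hyperplanes $L\cdot\svect+\alpha=0$ with $L\in\mathbb Z_{\geq0}^r\setminus\{0\}$, whatever the signs of the $\alpha$'s. So even if $Q_F$ vanished there, nothing would follow about the sign of $\alpha$. Appealing to the diagonal is circular: the paper's fact $S\subset\mathbb Q_{<0}$ (Lemma~\ref{lemma:1}) is deduced from this very theorem.

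\textbf{The missing idea belongs in Step~2.} Once it is supplied, Step~3 is unnecessary, because the shifts produced by descent are \emph{nonnegative}. You wrote ``up to translates $\svect\mapsto\svect+m\one$'' without pinning down how these arise or the sign of $m$, and that sign is the whole point.

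Put
\[
\mathcal N=F_Y^{\svect}\mu^*(dx)\cdot\Dmod_Y[\svect],\qquad
\nabla\mathcal N=F_Y^{\svect+\one}\mu^*(dx)\cdot\Dmod_Y[\svect],\qquad
b_Y(\svect)=\prod_{E\in\mathcal E}\prod_{h=1}^{L_E(\one)}\bigl(L_E(\svect)+k_E+h\bigr).
\]
The calculation in Proposition~\ref{prop:local-normal-crossing}, done without localizing, gives $b_Y\mathcal N\subseteq\nabla\mathcal N$.

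Let $\Mmod'$ be the image of $\mathcal H^0\mu_+\mathcal N$ in $j_*(\mathcal O_U[\svect]F^{\svect})\otimes\omega_X$. It has three properties.
\begin{enumerate}
\item It is coherent.
\item It contains $F^{\svect}dx$, by the canonical-section argument in the proof of Lemma~\ref{lem:global-section-trick}.
\item By $R$-linearity of $\mu_+$ and the long exact sequence for $0\to\nabla\mathcal N\to\mathcal N\to\mathcal N/\nabla\mathcal N\to0$, transported semilinearly, $b_Y(\svect)\Mmod'\subseteq\nabla_F\Mmod'$.
\end{enumerate}
Coherence (locally, then on a finite cover) gives $N$ with $\nabla_F^{N}\Mmod'\subseteq F^{\svect}dx\cdot\Dmod_X[\svect]$. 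Iterating then gives
\[
\prod_{k=0}^{N}b_Y(\svect+k\one)\,F^{\svect}dx\in\nabla_F^{N+1}\Mmod'\subseteq F^{\svect+\one}dx\cdot\Dmod_X[\svect].
\]
Every factor is $L_E(\svect)+kL_E(\one)+k_E+h$ with $k\geq0$, $h\geq1$, and $N_E\in\mathbb Z_{\geq0}^r\setminus\{0\}$. Hence the constants are positive integers, which proves both assertions at once. This is the multivariable form of Kashiwara's argument, i.e.\ Gyoja's route.
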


The assertion with rational constant terms is due to Sabbah; positivity is due to Gyoja. Constant factors, when present, are omitted.

\subsection{Localized maximal and minimal extensions}\label{sec:localized-extensions}
The maximal extension $j_*(\mathcal O_U[\svect]F^{\svect})$ need not be coherent over $\Dmod_X[\svect]$. Coherence is restored by localization in the parameter space.

\begin{theorem}[{\cite[Theorem 3.12]{Wu2022}}]\label{thm:maximal-extension}
For every prime ideal $\mathfrak p\subset R$, the module $j_*(\mathcal O_U[\svect]_{\mathfrak p}F^{\svect})$ is $n$-Cohen--Macaulay, and
\[
j_*(\mathcal O_U[\svect]_{\mathfrak p}F^{\svect})
=\Dmod_X[\svect]_{\mathfrak p}F^{\svect-k\one}
\qquad(k\gg0).
\]
\end{theorem}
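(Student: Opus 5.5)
The plan has two parts: first the equality (which gives coherence), then the Cohen--Macaulay property.

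\emph{The equality.} Every local section of $j_*(\mathcal O_U[\svect]_{\mathfrak p}F^{\svect})$ has the form $u^{-1}hf^{-m}F^{\svect}$ with $h\in\mathcal O_X[\svect]$, $u\in R\setminus\mathfrak p$ and $m\geq0$. Since $f^{-m}F^{\svect}=F^{\svect-m\one}$, the maximal extension is the increasing union
\[
j_*(\mathcal O_U[\svect]_{\mathfrak p}F^{\svect})=\bigcup_{k\geq0}\Dmod_X[\svect]_{\mathfrak p}F^{\svect-k\one}.
\]
It therefore suffices to show that this chain stabilizes. The successive quotient is $\bigl(M_F^{-(k+1)\one,-k\one}\bigr)_{\mathfrak p}$. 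By \eqref{eq:parameter-translation}, this quotient is annihilated by $\tau_{-(k+1)\one}(b)$, where $b\in B_F^{\one}$ is the product of linear forms from Theorem~\ref{thm:Sabbah-Gyoja}. A factor $L\cdot\svect+\alpha$ of $b$ becomes $L\cdot\svect+\alpha-(k+1)|L|$ with $|L|=L\cdot\one>0$. As $k$ varies, these cut out pairwise distinct parallel hyperplanes, and $Z(\mathfrak p)$ lies in at most one of them. So for $k\gg0$ no translated factor lies in $\mathfrak p$. Then $\tau_{-(k+1)\one}(b)$ is a unit in $R_{\mathfrak p}$, the quotient vanishes, and the chain stabilizes. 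This proves the equality, and coherence follows because each term is cyclic.

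\emph{Cohen--Macaulayness.} Purity of the stabilized module already follows from Theorem~\ref{thm:relative-properties-Fs}(1), after localizing. The remaining point is the vanishing of $\mathcal Ext^{i}(\,\cdot\,,\Amod_{R_{\mathfrak p}})$ for $i>n$, and for this I would go through the log resolution $\mu$. On $Y$ the divisor is simple normal crossings. Locally $F_Y^{\svect}=\prod_E x_E^{L_E(\svect)}\cdot(\text{unit})^{\svect}$, and the maximal extension there is a tensor product of one-variable modules $\Dmod[\svect]_{\mathfrak p}\,x^{L_E(\svect)-k}$.

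For each such factor, $\partial_x x-(L_E(\svect)-k+1)$ acts, and I would check directly that it has a two-term resolution over $\Amod_{R_{\mathfrak p}}$. I expect each factor to be holonomic-CM, since $L_E(\svect)-k+j\notin\mathfrak p$ for the relevant $j$ once $k\gg0$, by the same translation argument as above. An external-product or Koszul argument then gives that the maximal extension on $Y$, twisted by $\mu^*dx$, is $n$-Cohen--Macaulay.

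Next, identify $\mu_+$ of the maximal extension on $Y$ with the maximal extension on $X$. Since $\mu$ is an isomorphism over $U$, this should come from the base-change formula $\mu_+ j'_*=j_*\mu_+$ for open embeddings. Write $\mathcal N$ for the maximal extension on $Y$ and $\mathcal N^{\vee}$ for its relative dual. By Theorem~\ref{thm:relative-duality-direct-image},
\[
\mathbb D_{X,R_{\mathfrak p}}(\mu_+\mathcal N)\simeq\mu_+\mathbb D_{Y,R_{\mathfrak p}}(\mathcal N)=\mu_+\mathcal N^{\vee}.
\]
By the same computation on $Y$, $\mathcal N^{\vee}$ is the localized minimal extension, again a single module.

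\emph{Main obstacle.} The hard step is showing that $\mu_+$ of the maximal and of the minimal extension on $Y$ is concentrated in degree $0$. Equivalently, the dual of $j_*(\mathcal O_U[\svect]_{\mathfrak p}F^{\svect})$ has no cohomology outside degree $0$, which is exactly $n$-Cohen--Macaulayness.

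I would first try the following. For $k\gg0$, each module $\mathcal N$ on $Y$ is $\Dmod_Y[\svect]_{\mathfrak p}$-generated by a section on which $\mu_*$ behaves like pushing forward $\mathcal O_U$ twisted by a generic relative local system. Relative $\mathcal H^i\mu_+$ vanishes for $i\neq0$ after localizing at $\mathfrak p$, by comparing with the left $\mathcal O$-module de Rham description and using that the relevant higher direct images are supported on translated hyperplanes, which avoid $\mathfrak p$ for large shifts. If this fails, the fallback is Lemma~\ref{lem:relative-support} plus \eqref{eq:grade-parameter-support}: bound the grade of each $\mathcal Ext^{i}$, $i>n$, and show its parameter support is contained in finitely many translated hyperplanes, which can be shifted off $\mathfrak p$.
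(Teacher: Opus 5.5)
The paper does not prove this statement; it is quoted from \cite{Wu2022}, so I judge your argument on its own terms.

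\textbf{The equality is fine; the Cohen--Macaulay property is not proved.} Your first part is correct and complete. The maximal extension is the union of the lattices $\Dmod_X[\svect]_{\mathfrak p}F^{\svect-k\one}$. The quotient $(M_F^{-(k+1)\one,-k\one})_{\mathfrak p}$ is killed by $\tau_{-(k+1)\one}(b)$. Each translated factor $L\cdot\svect+\alpha-(k+1)L\cdot\one$ lies in $\mathfrak p$ for at most one $k$.

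For the second part, your log-resolution reduction is circular. Since $j=\mu\circ j'$ with $j,j'$ affine, $\mu_+j'_*\simeq j_*$. Theorem~\ref{thm:relative-duality-direct-image} then gives $\mu_+\mathbb D_{Y,R_{\mathfrak p}}(j'_*(\cdot))\simeq\mathbb D_{X,R_{\mathfrak p}}(j_*(\cdot))$. So, as you observe yourself, degree-zero concentration of $\mu_+$ of the minimal extension on $Y$ is literally the statement. The (correct) normal-crossings computation therefore contributes nothing.

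Both of your proposed ways out rest on an unproved claim. The claim is that the parameter supports of $\mathcal H^i\mu_+$ ($i\neq0$), or of $\mathcal Ext^i(\Dmod_X[\svect]F^{\svect},\Amod_R)$ ($i>n$), lie in finitely many hyperplanes whose normal $L$ satisfies $L\cdot\one\neq0$. Grade considerations give only codimension $\geq1$ for $\mathcal Ext^{n+1}$. For $r\geq2$ this does not, a priori, exclude a component containing lines parallel to $\one$ (e.g.\ one of the form $s_1-s_2=c$). Such a component can never be moved off $\mathfrak p$ by the shifts $\svect\mapsto\svect+k\one$. For $r=1$ your fallback does work, since codimension one means finitely many points. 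Ruling out such components in general is essentially the whole content of the theorem.

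\textbf{The missing idea is flatness.} Put $\Mmod=j_*(\mathcal O_U[\svect]_{\mathfrak p}F^{\svect})$.
\begin{enumerate}
\item As an $R_{\mathfrak p}$-module, $\Mmod$ is $\mathcal O_X[f^{-1}]\otimes_{\Cring}R_{\mathfrak p}$, hence flat. By your first part it is relative holonomic.
\item With $\kappa=\kappa(\mathfrak p)$, base change for duality and flatness give $\mathbb D(\Mmod)\otimes^L_{R_{\mathfrak p}}\kappa\simeq\mathbb D(\Mmod\otimes_{R_{\mathfrak p}}\kappa)$. This is the dual of a holonomic $\Dmod_X\otimes_{\Cring}\kappa$-module, so it is concentrated in degree $0$.
\item Suppose $i_0>0$ were the top degree with $\mathcal H^{i_0}\mathbb D(\Mmod)\neq0$. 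Right exactness would give $\mathcal H^{i_0}\mathbb D(\Mmod)\otimes_{R_{\mathfrak p}}\kappa=0$.
\item This contradicts fiber nonvanishing for nonzero relative holonomic modules. Theorem~\ref{thm:fiber-nonvanishing} gives it at the closed points of $Z(\mathfrak p)$. It passes to $\kappa(\mathfrak p)$ because a coherent $R/\mathfrak p$-torsion module is killed by a single nonzero element.
\item Negative degrees vanish because the grade of $\Mmod$ is at least $n$.
\end{enumerate}
This is also where the shift $k\gg0$ genuinely matters: it is what identifies the cyclic lattice with the flat module $\Mmod$.
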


The minimal extension is defined by duality:
\[
j_!(\mathcal O_U[\svect]_{\mathfrak p}F^{\svect})
:=\mathbb D_{X,R_{\mathfrak p}}j_*
       \mathbb D_{U,R_{\mathfrak p}}
       (\mathcal O_U[\svect]_{\mathfrak p}F^{\svect}).
\]
$j_!(\mathcal O_U[\svect]_{\mathfrak p}F^{\svect})$ is a sheaf(instead of a complex) and it is $n$-Cohen--Macaulay,see \cite[\S3.3]{Wu2022}.

\begin{theorem}[{\cite[Theorem 3.13]{Wu2022}}]\label{thm:minimal-extension}
For every prime ideal $\mathfrak p\subset R$, the minimal extension is $n$-Cohen--Macaulay. The natural morphism
\[
j_!(\mathcal O_U[\svect]_{\mathfrak p}F^{\svect})
\longrightarrow j_*(\mathcal O_U[\svect]_{\mathfrak p}F^{\svect})
\]
is injective, and
\[
j_!(\mathcal O_U[\svect]_{\mathfrak p}F^{\svect})
=\Dmod_X[\svect]_{\mathfrak p}F^{\svect+k\one}
\qquad(k\gg0).
\]
\end{theorem}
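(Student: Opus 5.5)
The statement is Theorem~\ref{thm:minimal-extension}, quoted from \cite{Wu2022}: the minimal extension is $n$-Cohen--Macaulay, the map $j_!\to j_*$ is injective, and $j_!$ equals $\Dmod_X[\svect]_{\mathfrak p}F^{\svect+k\one}$ for $k\gg0$. I would deduce all three from Theorem~\ref{thm:maximal-extension} by duality.

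\textbf{Step 1: dualize on $U$.} Since $\mathcal O_U[\svect]_{\mathfrak p}F^{\svect}$ is a free $\mathcal O_U\otimes R_{\mathfrak p}$-module of rank one with a flat relative connection, its relative dual is of the same kind. It is isomorphic to $\mathcal O_U[\svect]_{\mathfrak p}F^{-\svect}$, where the parameters are twisted by the involution $\svect\mapsto-\svect$ (or, equivalently, one tracks the $R$-module structure through the dual). Applying Theorem~\ref{thm:maximal-extension} to this rank-one module with the tuple $F$ and the transformed prime gives two facts. First, $j_*\mathbb D_U(\cdot)$ is a single $n$-Cohen--Macaulay module. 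Second, it is generated by $F^{-\svect-k\one}$ for $k\gg0$.

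\textbf{Step 2: Cohen--Macaulay property.} For an $n$-Cohen--Macaulay module, $\mathbb D_{X,R_{\mathfrak p}}$ is concentrated in degree zero and again $n$-Cohen--Macaulay. This follows from $\mathcal Ext^n(\mathcal Ext^n(\Mmod,\Amod),\Amod)\simeq\Mmod$ and the vanishing of the other $\mathcal Ext$ groups. Hence $j_!$ is a sheaf, it is $n$-Cohen--Macaulay, and in particular it is $n$-pure.

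\textbf{Step 3: injectivity.} The natural map $j_!\to j_*$ is an isomorphism on $U$. Its kernel $K$ is therefore supported on $D$. Because $j_!$ is $n$-pure, it suffices to show $K$ has grade $>n$. A submodule supported on $D$ with grade exactly $n$ would have relative characteristic variety of dimension $n+\dim R_{\mathfrak p}$. Such a component must be of the form $\Lambda\times\Spec R_{\mathfrak p}$ with $\Lambda$ lying over $D$. I would exclude this by showing that $j_!$, like $j_*$, has no nonzero sections supported on $D$ whose $R$-support is all of $\Spec R_{\mathfrak p}$. The argument is as follows. Dually, such a $K$ would correspond to a quotient of $j_*\mathbb D$ supported on $D$ with full parameter support. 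But $j_*\mathbb D$ is generically in $\svect$ equal to $\Dmod_X[\svect]F^{-\svect}$, and the quotients $M_F^{-k\one,\mathbf 0}$ have proper parameter support by Theorem~\ref{thm:relative-properties-Fs}(3), since their grade is $n+1$.

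\textbf{Step 4: the generator.} I would identify $j_!$ with the image of $\Dmod_X[\svect]_{\mathfrak p}F^{\svect+k\one}$ inside $j_*$. The idea is to dualize the chain $\Dmod F^{\svect+k\one}\subseteq\Dmod F^{\svect}\subseteq\Dmod F^{\svect-k\one}=j_*$. Here $\Dmod F^{\svect+k\one}$ is $n$-pure by Theorem~\ref{thm:relative-properties-Fs}(1), and the quotients $M_F^{\cdot,\cdot}$ have grade $n+1$. By Sabbah--Gyoja (Theorem~\ref{thm:Sabbah-Gyoja}), for $k$ large the modules $\Dmod_{\mathfrak p}F^{\svect+k\one}$ stabilize: every further shift quotient is annihilated by a product of linear forms that is a unit in $R_{\mathfrak p}$. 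Taking relative duals of the stabilized lattice, I expect $\mathbb D(\Dmod_{\mathfrak p}F^{\svect+k\one})$ to be the stabilized maximal lattice of the dual module. This identification is the \textbf{main obstacle}; it amounts to dualizing a cyclic presentation. Given it, Step 1 identifies $\mathbb D(\Dmod_{\mathfrak p}F^{\svect+k\one})$ with $j_*\mathbb D$, and biduality then yields $j_!\simeq\Dmod_{\mathfrak p}F^{\svect+k\one}$, compatibly with the map to $j_*$.
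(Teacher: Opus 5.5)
Your Steps 1--3 are sound in outline. The dual on $U$ is $\mathcal O_U[\svect]_{\mathfrak p}F^{-\svect}$, which the involution $\svect\mapsto-\svect$ handles. Biduality transfers $n$-Cohen--Macaulayness through $\mathbb D$. Injectivity follows from $n$-purity of $j_!$ once you know the kernel vanishes at the generic point. In Step 3 you should add why a quotient of $\Dmod_X[\svect]_{\mathfrak p}F^{-\svect-k\one}$ supported on $D$ has proper parameter support: the image of the generator is killed by some $f^m$. The quotient is therefore a quotient of $\Dmod F^{-\svect-k\one}/\Dmod F^{-\svect-k\one+m\one}$, which vanishes generically.

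The genuine gap is Step 4, which carries the actual content of the third assertion. You only \emph{expect} that $\mathbb D(\Dmod_X[\svect]_{\mathfrak p}F^{\svect+k\one})$ is the stabilized maximal lattice of the dual. Given Steps 1--2, that expectation is equivalent to the statement you are trying to prove. ``Dualizing a cyclic presentation'' is not a usable mechanism, because $\mathcal Ext^n(\Amod/I,\Amod)$ cannot be read off from generators of $I$. You also do not know a priori that $\Dmod_X[\svect]_{\mathfrak p}F^{\svect+k\one}$ is $n$-Cohen--Macaulay, so you do not even know that its dual is a module. Biduality alone would not give compatibility with the maps to $j_*$ either. (The paper itself gives no proof here; it quotes the result from Wu.)

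The missing idea is to characterize both sides as the smallest coherent extension inside $j_*$, so that no dual of a cyclic module has to be computed. Write $\mathcal N=\mathcal O_U[\svect]_{\mathfrak p}F^{\svect}$ and $\mathcal M_+=\Dmod_X[\svect]_{\mathfrak p}F^{\svect+k\one}$ for $k\gg0$.

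\emph{First inclusion.} Since $j_*\mathbb D_U\mathcal N$ is coherent (your Step 1) and $\mathbb D$ is an anti-involution on coherent complexes, for every coherent $\mathcal L$ the $(j^{-1},j_*)$-adjunction gives
\[
\operatorname{Hom}(j_!\mathcal N,\mathcal L)\cong\operatorname{Hom}(\mathbb D\mathcal L,j_*\mathbb D_U\mathcal N)\cong\operatorname{Hom}(\mathbb D_U(\mathcal L|_U),\mathbb D_U\mathcal N)\cong\operatorname{Hom}(\mathcal N,\mathcal L|_U),
\]
compatibly with restriction to $U$. So if $\mathcal L\subseteq j_*\mathcal N$ is coherent with $\mathcal L|_U=\mathcal N$, the natural map $j_!\mathcal N\to j_*\mathcal N$ factors through $\mathcal L$. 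Both maps restrict to the identity on $U$, so they agree. The natural map is injective by your Step 3, hence $j_!\mathcal N\subseteq\mathcal L$.

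\emph{Second inclusion.} For any such $\mathcal L$, the quotient $j_*\mathcal N/\mathcal L$ is a quasi-coherent $\mathcal O_X$-module supported on $D$. Hence, locally on $X$, $F^{\svect+m\one}=f^mF^{\svect}\in\mathcal L$ for $m\gg0$. By the Sabbah--Gyoja stabilization you already noted, this gives $\mathcal L\supseteq\mathcal M_+$.

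Taking $\mathcal L=\mathcal M_+$ in the first inclusion and $\mathcal L=j_!\mathcal N$ in the second yields $j_!\mathcal N=\mathcal M_+$ inside $j_*\mathcal N$. The Cohen--Macaulay property of $\mathcal M_+$, and your expected formula for its dual, then come out as consequences rather than inputs.
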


We therefore set
\begin{equation}\label{eq:localized-Psi}
\Psi_{F,\mathfrak p}(\mathcal O_X)
:=\frac{j_*(\mathcal O_U[\svect]_{\mathfrak p}F^{\svect})}
        {j_!(\mathcal O_U[\svect]_{\mathfrak p}F^{\svect})}
=\bigl(M_F^{-k\one,k\one}\bigr)_{\mathfrak p}
\qquad(k\gg0).
\end{equation}
The results in this subsection are local on $X$ and hence also apply on the smooth, possibly nonaffine, variety $Y$ of a log resolution.

\subsection{Left and right modules}\label{sec:side-changing}
Tensoring with $\omega_X$ gives an exact equivalence from left to right $\Amod_R$-modules:
\[
\Mmod\longmapsto\Mmod^r:=\Mmod\otimes_{\mathcal O_X}\omega_X.
\]
For a regular function $g$, a vector field $v$, and a local top form $\omega$, the action is
\[
(m\otimes\omega)g=gm\otimes\omega,\qquad
(m\otimes\omega)v=-(vm)\otimes\omega-m\otimes\mathcal L_v\omega.
\]
The coefficient ring $R$ acts centrally. The inverse equivalence sends a right module $\mathcal N$ to $\mathcal N\otimes_{\mathcal O_X}\omega_X^{-1}$, with
\[
v(n\otimes\eta)=-(nv)\otimes\eta+n\otimes\mathcal L_v\eta.
\]
These equivalences commute with localization in $R$. In local coordinates $x_1,\ldots,x_n$, with $dx=dx_1\wedge\cdots\wedge dx_n$, they identify
\[
\Dmod_X[\svect]F^{\svect}\otimes_{\mathcal O_X}\omega_X
=F^{\svect}dx\cdot\Dmod_X[\svect].
\]
\section{Log-resolution hyperplanes and the lower bound}\label{lower}

In this section we prove Theorem~\ref{Theorem:1.1}. The argument has three steps: a local normal-crossings calculation identifies a cyclic lattice with the minimal extension on the resolution; proper direct image places the original generator in the minimal extension downstairs; localization at a height-one prime then excludes all other codimension-one components.

Retain the log resolution $\mu:Y\to X$ and the numerical data~\eqref{eq:intro-resolution-data}. Put $F_Y=F\circ\mu$, and identify $Y\setminus\mu^{-1}D$ with $U=X\setminus D$. Denote the open embeddings by $j:U\hookrightarrow X$ and $j':U\hookrightarrow Y$. We work with right relative $\Dmod$-modules.

The argument is local on $X$. After passing to an affine open subset with local coordinates $x_1,\ldots,x_n$, we may trivialize $\omega_X$ by $dx=dx_1\wedge\cdots\wedge dx_n$. The assertions obtained on such a cover give the corresponding sheaf-theoretic assertions on $X$.

\subsection{The cyclic lattice on the resolution}

\begin{proposition}\label{prop:local-normal-crossing}
Let $\mathfrak p\subset R$ be a height-one prime such that
\begin{equation}\label{eq:avoid-resolution-hyperplanes}
L_E(\svect)+k_E+c\notin\mathfrak p
\qquad(E\in\mathcal E,\ c\in\mathbb Z_{>0}).
\end{equation}
Then, for every $\boldsymbol\beta\in\mathbb Z_{\geq0}^r$ and every standard basis vector $\mathbf e_i$,
\[
F_Y^{\svect+\boldsymbol\beta}\mu^*(dx)\cdot\Dmod_Y[\svect]_{\mathfrak p}
=F_Y^{\svect+\boldsymbol\beta+\mathbf e_i}\mu^*(dx)\cdot\Dmod_Y[\svect]_{\mathfrak p}.
\]
\end{proposition}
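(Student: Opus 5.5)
The statement is local on $Y$, so I would work at a point of $Y$ with local coordinates $y_1,\dots,y_n$ adapted to the simple normal crossing divisor $\mu^{-1}D$. Near such a point,
\[
f_i\circ\mu = u_i\prod_{j=1}^m y_j^{N_{j,i}}
\]
with units $u_i$, where $y_j=0$ defines the component $E_j$. Likewise $\mu^*(dx)=v\prod_j y_j^{k_j}\,dy$ with $v$ a unit and $k_j=k_{E_j}$. The inclusion "$\supseteq$" is immediate: the generator $F_Y^{\svect+\boldsymbol\beta+\mathbf e_i}\mu^*(dx)$ is obtained from $F_Y^{\svect+\boldsymbol\beta}\mu^*(dx)$ by right multiplication by the function $f_i\circ\mu\in\mathcal O_Y$. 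So the content is the reverse inclusion. It suffices to find, for each $i$, an operator $P\in\Dmod_Y[\svect]$ and a polynomial $b(\svect)$ with $b\notin\mathfrak p$ such that
\[
F_Y^{\svect+\boldsymbol\beta+\mathbf e_i}\mu^*(dx)\cdot P=b(\svect)\,F_Y^{\svect+\boldsymbol\beta}\mu^*(dx).
\]
Since $b$ becomes invertible in $R_{\mathfrak p}$, this gives the claim.

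First I would remove the units. Multiplying by the unit $\prod_i u_i^{s_i+\beta_i}\cdot v$ is an automorphism of the relevant right modules: it conjugates $\Dmod_Y[\svect]$ by a unit twist, because $u^{\svect}$ satisfies a first-order equation with coefficients in $\mathcal O_Y[\svect]$. Hence I may assume the generator is a pure monomial $\prod_j y_j^{L_j(\svect+\boldsymbol\beta)+k_j}\,dy$, where $L_j=L_{E_j}$. Multiplying by $f_i\circ\mu$ is, up to units, multiplying by $\prod_j y_j^{N_{j,i}}$. It therefore suffices to divide by one variable $y_j$ at a time, repeating $N_{j,i}$ times; components $E_j$ with $N_{j,i}=0$ need nothing.

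For a single variable, the model is the right action of $\partial_{y_j}$ on a monomial form. Using the side-changing formula of \S\ref{sec:side-changing}, $(y^{\lambda}dy)\cdot\partial_{y_j}=-\lambda_j\, y^{\lambda-e_j}dy$. Thus from $y^{\lambda}dy$ with exponent $\lambda_j=L_j(\svect+\boldsymbol\beta)+k_j+t$, for $t=1,\dots,N_{j,i}$, one obtains the exponent lowered by one, at the cost of the factor $L_j(\svect)+L_j(\boldsymbol\beta)+k_j+t$. Since $\boldsymbol\beta\geq0$, $L_j(\boldsymbol\beta)\geq0$ and $t\ge1$, each such factor has the form $L_{E_j}(\svect)+k_{E_j}+c$ with $c\in\mathbb Z_{>0}$. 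By \eqref{eq:avoid-resolution-hyperplanes} it lies outside $\mathfrak p$. Composing these steps over all $j$ gives the required $P$ and a $b$ that is a product of such factors, hence $b\notin\mathfrak p$ because $\mathfrak p$ is prime.

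The main obstacle is making the unit-removal rigorous in the relative setting. After twisting, $\partial_{y_j}$ acting on $u^{\svect}y^{\lambda}dy$ produces extra terms of the form $u^{-1}\partial_j(u)\cdot(\text{linear in }\svect)$. I would handle this by using the operator $\partial_{y_j}-\sum_i (s_i+\beta_i)\,\partial_j(u_i)/u_i-\partial_j(v)/v$ instead, which lies in $\Dmod_Y[\svect]$ near the point because the $u_i$ and $v$ are units there. The eigenvalue computation is then exactly as in the monomial case. Finally I would glue: equality of two subsheaves is local, so the local equalities give the global statement on $Y$.
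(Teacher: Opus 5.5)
Your proof is essentially the paper's: you strip the units by the conjugation $P\mapsto U^{-\svect}PU^{\svect}$ (equivalently, your twisted operator), then lower each exponent with $\partial_{y_E}^{N_{E,i}}$, which produces the factors $L_E(\svect)+k_E+L_E(\boldsymbol\beta)+h$ for $1\le h\le N_{E,i}$, and these are invertible in $R_{\mathfrak p}$. One sign needs fixing: for right modules $(g\,dy)\cdot\partial_{y_j}=-\partial_{y_j}(g)\,dy$, so the twisted operator must be $\partial_{y_j}+\partial_{y_j}(w)/w$ with $w=v\prod_i u_i^{s_i+\beta_i}$, as in the paper's $\Phi(\partial_{y_a})$; your minus sign is the left-module version and gives $(w\,y^{\lambda}dy)\cdot\bigl(\partial_{y_j}-\partial_{y_j}(w)/w\bigr)=-\bigl(\lambda_j+2y_j\,\partial_{y_j}(w)/w\bigr)\,w\,y^{\lambda}y_j^{-1}\,dy$, which is not an $R$-multiple of $w\,y^{\lambda}y_j^{-1}\,dy$.
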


\begin{proof}
Choose local coordinates on $Y$. Locally,
\[
f_i\circ\mu=u_i\prod_E y_E^{N_{E,i}},\qquad
\mu^*(dx)=v\prod_E y_E^{k_E}\,dy,
\]
here the $u_j$ and $v$ are units.  Thus we need to prove $(v\prod_{j=1}^r u_j^{s_j+\beta_j}\prod_Ey_E^{L_E(\svect+\boldsymbol\beta)+k_E})\Dmod_Y[s]_\mathfrak p=(vu_i\prod_{j=1}^r u_j^{s_j+\beta_j}\prod_Ey_E^{L_E(\svect+\boldsymbol\beta+e_i)+k_E})\Dmod_Y[s]_\mathfrak p$. Set
$U^{\svect}=\prod_{j=1}^r u_j^{s_j}$.
Consider the $R$-algebra automorphism
\[
\Phi:\Dmod_Y[\svect]\longrightarrow\Dmod_Y[\svect]
\]
which fixes $\mathcal O_Y[\svect]$ pointwise and is given by
\[
\Phi(\partial_{y_a})
=
\partial_{y_a}
+\sum_{j=1}^r s_j\frac{\partial_{y_a}(u_j)}{u_j}.
\]
The
inverse is obtained by changing the plus sign to a minus sign.
Formally, $\Phi(P)=U^{-\svect}PU^{\svect}$.
Since $\Phi$ fixes $R$ pointwise, it extends to an automorphism
of $\Dmod_Y[\svect]_{\mathfrak p}$.

For the right $\Dmod$-module structure, multiplication by the
formal symbol $U^{\svect}$ satisfies
\[
(U^{\svect}m)\cdot\Phi(P)
=
U^{\svect}(m\cdot P),
\]
where $m$ is a local section of the corresponding monomial
right module. Consequently,
\[
(U^{\svect}m)\cdot\Dmod_Y[\svect]_{\mathfrak p}
=
U^{\svect}
\bigl(m\cdot\Dmod_Y[\svect]_{\mathfrak p}\bigr).
\]
Thus the desired equality of cyclic lattices is equivalent
to the corresponding equality with the common factor
$U^{\svect}$ removed. The remaining factors
$v\prod_j u_j^{\beta_j}$ and
$v u_i\prod_j u_j^{\beta_j}$ are units in $\mathcal O_Y$
and do not change the cyclic modules generated.
It therefore suffices to carry out the calculation in the
monomial case.

Multiplication by $f_i\circ\mu$ gives the inclusion from right to left. For the converse, fix $E$ and put
\[
m=N_{E,i},\qquad A_E=L_E(\svect)+L_E(\boldsymbol\beta)+k_E.
\]
If $m=0$, there is nothing to check in this coordinate. Otherwise, the right action gives
\[
(y_E^{A_E+m}dy)\cdot\partial_{y_E}^{m}
=(-1)^m\prod_{h=1}^{m}(A_E+h)y_E^{A_E}\,dy.
\]
Each factor has the form $L_E(\svect)+k_E+c$ with $c=L_E(\boldsymbol\beta)+h>0$, and is therefore invertible in $R_{\mathfrak p}$ by~\eqref{eq:avoid-resolution-hyperplanes}. Applying this calculation in every divisor coordinate gives the reverse inclusion.
\end{proof}

\begin{lemma}\label{lem:upstairs-jshriek}
Under the assumptions of Proposition~\ref{prop:local-normal-crossing},
\[
F_Y^{\svect}\mu^*(dx)\cdot\Dmod_Y[\svect]_{\mathfrak p}
=j'_!(\mathcal O_U[\svect]_{\mathfrak p}F_Y^{\svect})
  \otimes_{\mathcal O_Y}\omega_Y.
\]
\end{lemma}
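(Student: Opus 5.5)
We will identify both sides with a module of the form $F_Y^{\svect+k\one}\mu^*(dx)\cdot\Dmod_Y[\svect]_{\mathfrak p}$ for $k\gg0$. The plan has three steps.

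First, apply Theorem~\ref{thm:minimal-extension} on $Y$, which is legitimate since the results of \S\ref{sec:localized-extensions} are local; here the tuple is $F_Y$ and the open set is $U=Y\setminus\mu^{-1}D$. This gives
\[
j'_!(\mathcal O_U[\svect]_{\mathfrak p}F_Y^{\svect})=\Dmod_Y[\svect]_{\mathfrak p}F_Y^{\svect+k\one}\qquad(k\gg0).
\]
Passing to right modules via \S\ref{sec:side-changing}, the right-hand side of the lemma becomes
\[
\Dmod_Y[\svect]_{\mathfrak p}F_Y^{\svect+k\one}\otimes\omega_Y=F_Y^{\svect+k\one}\,dy\cdot\Dmod_Y[\svect]_{\mathfrak p}
\]
in local coordinates $y$ on $Y$. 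Since both sides are subsheaves of $j'_*(\mathcal O_U[\svect]_{\mathfrak p}F_Y^{\svect})\otimes\omega_Y$ and equality is local, we may work on a chart where $\mu^*(dx)=v\prod_E y_E^{k_E}dy$ with $v$ a unit.

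Second, iterate Proposition~\ref{prop:local-normal-crossing}. Adding the basis vectors $\mathbf e_i$ one at a time, with $\boldsymbol\beta$ running through nonnegative vectors, gives
\[
F_Y^{\svect}\mu^*(dx)\cdot\Dmod_Y[\svect]_{\mathfrak p}=F_Y^{\svect+k\one}\mu^*(dx)\cdot\Dmod_Y[\svect]_{\mathfrak p}
\]
for every $k\ge0$. It therefore remains to compare $F_Y^{\svect+k\one}\mu^*(dx)\cdot\Dmod_Y[\svect]_{\mathfrak p}$ with $F_Y^{\svect+k\one}dy\cdot\Dmod_Y[\svect]_{\mathfrak p}$ for large $k$. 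The two generators differ by the factor $v\prod_E y_E^{k_E}$.

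Third, handle the Jacobian factor, which is the main point. Multiplication by $\prod_E y_E^{k_E}$ shows that the $\mu^*(dx)$-lattice is contained in the $dy$-lattice. For the reverse inclusion, we use the same trick as in the proposition. After removing the units via the automorphism $\Phi$, we reduce to monomials and compute
\[
(y_E^{A+k_E}dy)\cdot\partial_{y_E}^{k_E}=\pm\prod_{h=1}^{k_E}(A+h)\,y_E^{A}dy,\qquad A=L_E(\svect)+kL_E(\one).
\]
The factors are $L_E(\svect)+kL_E(\one)+h$ with $1\le h\le k_E$. Each factor must be a unit in $R_{\mathfrak p}$, and assumption~\eqref{eq:avoid-resolution-hyperplanes} only covers forms $L_E(\svect)+k_E+c$ with $c>0$. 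Writing $kL_E(\one)+h=k_E+c$ gives $c=kL_E(\one)+h-k_E$, which is positive once $k$ is large, because $L_E(\one)>0$. This is exactly where $k\gg0$ is needed, and it is the step I expect to require care: the choice of $k$ has to be uniform over the finitely many $E$ meeting a given affine chart, and it has to be compatible with the $k$ from Theorem~\ref{thm:minimal-extension}. Both are harmless, since enlarging $k$ preserves both equalities.

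Combining the three steps gives
\[
F_Y^{\svect}\mu^*(dx)\cdot\Dmod_Y[\svect]_{\mathfrak p}=F_Y^{\svect+k\one}dy\cdot\Dmod_Y[\svect]_{\mathfrak p}=j'_!(\mathcal O_U[\svect]_{\mathfrak p}F_Y^{\svect})\otimes\omega_Y.
\]
These local equalities are equalities of subsheaves of $j'_*(\mathcal O_U[\svect]_{\mathfrak p}F_Y^{\svect})\otimes\omega_Y$, so they glue to the global statement.
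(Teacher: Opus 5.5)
Your proof is correct. Its first two steps match the paper's: you identify $j'_!$ with $F_Y^{\svect+k\one}dy\cdot\Dmod_Y[\svect]_{\mathfrak p}$ for $k\gg0$ via Theorem~\ref{thm:minimal-extension}, and you iterate Proposition~\ref{prop:local-normal-crossing}.

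The difference is in how you remove the Jacobian factor $g=v\prod_E y_E^{k_E}$. You redo the normal-crossings computation with $\partial_{y_E}^{k_E}$. This requires the conjugation by $\Phi$ again and a second use of hypothesis~\eqref{eq:avoid-resolution-hyperplanes}, now with $c=kL_E(\one)+h-k_E$.

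The paper argues softly instead. Choosing $\ell$ with $\ell L_E(\one)\geq k_E$ makes $(f\circ\mu)^\ell/g$ regular, so
\[
F_Y^{\svect+(k+\ell)\one}dy\cdot\Dmod_Y[\svect]_{\mathfrak p}
\subseteq F_Y^{\svect+k\one}g\,dy\cdot\Dmod_Y[\svect]_{\mathfrak p}
\subseteq F_Y^{\svect+k\one}dy\cdot\Dmod_Y[\svect]_{\mathfrak p}.
\]
For $k\gg0$ the two outer lattices both equal the minimal extension by stabilization, so the middle one does as well.

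The paper's route shows that, at large shifts, the $\mu^*(dx)$-lattice and the $dy$-lattice agree for every prime $\mathfrak p$. The avoidance hypothesis enters only through Proposition~\ref{prop:local-normal-crossing}, and no further calculation is needed. Your route is more hands-on and gives this comparison at the explicit threshold $kL_E(\one)\geq k_E$ (the same condition as the paper's choice of $\ell$). However, you still need Theorem~\ref{thm:minimal-extension} in your first step, so that explicitness buys little here.
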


\begin{proof}
Proposition~\ref{prop:local-normal-crossing}, applied along a path of unit shifts, gives
\[
F_Y^{\svect}\mu^*(dx)\cdot\Dmod_Y[\svect]_{\mathfrak p}
=F_Y^{\svect+k\one}\mu^*(dx)\cdot\Dmod_Y[\svect]_{\mathfrak p}
\qquad(k\geq0).
\]
It remains to identify the right-hand side for large $k$.

Locally write $\mu^*(dx)=g\,dy$, where $g$ is a unit times $\prod_Ey_E^{k_E}$. Choose $\ell\geq0$ such that $\ell L_E(\one)\geq k_E$ for every component meeting the neighborhood. Then $(f\circ\mu)^\ell/g$ is regular, and
\[
\begin{aligned}
F_Y^{\svect+(k+\ell)\one}dy\cdot\Dmod_Y[\svect]_{\mathfrak p}
&\subseteq F_Y^{\svect+k\one}g\,dy\cdot\Dmod_Y[\svect]_{\mathfrak p}\\
&\subseteq F_Y^{\svect+k\one}dy\cdot\Dmod_Y[\svect]_{\mathfrak p}.
\end{aligned}
\]
For $k\gg0$, both outer terms are the right-module form of the minimal extension, by Theorem~\ref{thm:minimal-extension}. Hence the middle term agrees with them. This proves the assertion locally, and the equalities glue on $Y$.
\end{proof}

\subsection{Descent of the generator}

\begin{lemma}\label{lem:global-section-trick}
Under the same assumptions,
\[
F^{\svect}dx\in
j_!(\mathcal O_U[\svect]_{\mathfrak p}F^{\svect})
\otimes_{\mathcal O_X}\omega_X.
\]
Consequently, for every $k\geq0$,
\[
F^{\svect}dx\cdot\Dmod_X[\svect]_{\mathfrak p}
=F^{\svect+k\one}dx\cdot\Dmod_X[\svect]_{\mathfrak p}.
\]
\end{lemma}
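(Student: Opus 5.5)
The plan is to push the upstairs identity of Lemma~\ref{lem:upstairs-jshriek} down along $\mu$ by proper direct image, and to use compatibility of $\mu_+$ with duality to identify what arrives downstairs with the minimal extension on $X$. All modules are localized at the fixed height-one prime $\mathfrak p$.

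\emph{Step 1: direct images of the extensions.} Since $\mu$ is an isomorphism over $U$, base change over $U$ gives $\mu_+ j'_* = j_*$ on $\mathcal O_U[\svect]_{\mathfrak p}F^{\svect}$. Combined with Theorem~\ref{thm:relative-duality-direct-image}, this yields
\[
\mu_+\,j'_!(\mathcal O_U[\svect]_{\mathfrak p}F_Y^{\svect})\simeq j_!(\mathcal O_U[\svect]_{\mathfrak p}F^{\svect}).
\]
The definition of $j'_!$ as $\mathbb D j'_*\mathbb D$ lets us move the duality through $\mu_+$. By Theorem~\ref{thm:minimal-extension}, both sides are modules (concentrated in degree zero), and the identification restricts to the identity over $U$. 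I expect this step to be the main obstacle: the relative duality comparison has to be made precise, and one must check that the higher direct images vanish.

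\emph{Step 2: the trace map on the generator.} For right modules, $\mu_+$ in degree $0$ receives a natural map from global sections. Concretely, for a right $\Dmod_Y[\svect]_{\mathfrak p}$-module $\mathcal N$ there is a map
\[
\mu_*\mathcal N\to \mathcal H^0\mu_+\mathcal N,\qquad n\mapsto n\otimes 1,
\]
obtained from $\mathcal N\otimes_{\Dmod_Y}\Dmod_{Y\to X}$, and it is compatible with the analogous map for $j'_*$. The section $F_Y^{\svect}\mu^*(dx)$ is a global section over $\mu^{-1}(V)$ of $j'_!(\ldots)\otimes\omega_Y$, by Lemma~\ref{lem:upstairs-jshriek}. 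Its image in $\mathcal H^0\mu_+ j'_*(\ldots)^r = j_*(\ldots)^r$ is $F^{\svect}dx$, since over $U$ the map $\mu$ is the identity and $\mu^*(dx)$ corresponds to $dx$. Naturality applied to the inclusion $j'_!\hookrightarrow j'_*$, together with Step 1, then places $F^{\svect}dx$ in the image of $j_!(\ldots)^r\to j_*(\ldots)^r$. That map is injective by Theorem~\ref{thm:minimal-extension}, which gives the first claim.

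\emph{Step 3: the consequence.} By Theorem~\ref{thm:minimal-extension}, the minimal extension equals $F^{\svect+k\one}dx\cdot\Dmod_X[\svect]_{\mathfrak p}$ for $k\gg0$. Step 2 therefore gives
\[
F^{\svect}dx\cdot\Dmod_X[\svect]_{\mathfrak p}\subseteq F^{\svect+k\one}dx\cdot\Dmod_X[\svect]_{\mathfrak p}
\]
for $k\gg0$. The reverse inclusion holds for every $k\geq 0$ by multiplication by $f^k$. For an arbitrary $k\ge0$, the module $F^{\svect+k\one}dx\cdot\Dmod_X[\svect]_{\mathfrak p}$ is sandwiched between the two equal modules at $0$ and at large $k$, so equality holds for all $k\geq0$.
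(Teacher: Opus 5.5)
Your proposal is correct and follows the paper's proof step for step. You identify $\mu_+\mathcal N$ with the minimal extension on $X$ via proper direct image and duality, push the generator $F_Y^{\svect}\mu^*(dx)$ into $\mathcal H^0\mu_+$, use that sections of $j_*$ are determined on $U$ together with injectivity of $j_!\to j_*$, and finish with the sandwich argument.

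The one point to tighten is the construction of $\mu_*\mathcal N\to\mathcal H^0\mu_+\mathcal N$. The edge map attached to the underived tensor product goes the other way, $\mathcal H^0\mu_+\mathcal N\to\mu_*(\mathcal N\otimes_{\Dmod_Y}\Dmod_{Y\to X})$. Build the map instead as the paper does, from the cyclic morphism $\Dmod_Y[\svect]_{\mathfrak p}\to\mathcal N$, $P\mapsto nP$, whose source has derived tensor product $\Dmod_{Y\to X}[\svect]_{\mathfrak p}$ concentrated in degree zero.
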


\begin{proof}
Write $\mathcal N=F_Y^{\svect}\mu^*(dx)\cdot\Dmod_Y[\svect]_{\mathfrak p}$. By Lemma~\ref{lem:upstairs-jshriek}, properness of $\mu$, and compatibility of direct image with duality,
\begin{equation}\label{eq:descent-minimal-extension}
\mu_+\mathcal N
\simeq j_!(\mathcal O_U[\svect]_{\mathfrak p}F^{\svect})
       \otimes_{\mathcal O_X}\omega_X.
\end{equation}
Here we use $\mu\circ j'=j$. In particular, this direct image is concentrated in degree zero.

We identify a section of~\eqref{eq:descent-minimal-extension} by tracking the generator. It defines a right-module morphism
\[
\Dmod_Y[\svect]_{\mathfrak p}\longrightarrow\mathcal N,
\qquad P\longmapsto F_Y^{\svect}\mu^*(dx)\cdot P.
\]
Let
\[
\Dmod_{Y\to X}[\svect]_{\mathfrak p}
=\mathcal O_Y\otimes_{\mu^{-1}\mathcal O_X}
                  \mu^{-1}\Dmod_X[\svect]_{\mathfrak p}
\]
be the right-module transfer bimodule. Derived tensor product with this bimodule, preceded by the map $\mathcal O_Y\to\Dmod_{Y\to X}[\svect]_{\mathfrak p}$ sending $1$ to $1\otimes1$, gives
\[
\mathcal O_Y\longrightarrow
\mathcal N\otimes^L_{\Dmod_Y[\svect]_{\mathfrak p}}
                   \Dmod_{Y\to X}[\svect]_{\mathfrak p}.
\]
Since $\mu$ is a proper birational morphism between smooth varieties, $R\mu_*\mathcal O_Y\simeq\mathcal O_X$. Applying $R\mu_*$, the image of $1$ defines a section $u$ of $\mu_+\mathcal N$. Over $U$, where $\mu$ is an isomorphism, it restricts to $F^{\svect}dx$.

By Theorem~\ref{thm:minimal-extension}, the right-hand side of~\eqref{eq:descent-minimal-extension} embeds in the maximal extension. A section of the latter is determined by its restriction to $U$. Thus $u=F^{\svect}dx$, proving the first assertion.

For $k\gg0$, Theorem~\ref{thm:minimal-extension} now gives
\[
F^{\svect}dx\in F^{\svect+k\one}dx\cdot\Dmod_X[\svect]_{\mathfrak p}.
\]
The opposite inclusion of cyclic modules follows by multiplication by $f^k$. Equality for every $k\geq0$ follows by placing each intermediate lattice between the equal terms for $0$ and for a sufficiently large shift.
\end{proof}

\subsection{Codimension-one components}

\begin{theorem}\label{thm:codim-one-components}
For $\mathbf a\in\mathbb Z_{\geq0}^r$, every irreducible component of $Z(B_F^{\mathbf a})$ of codimension one is
\[
Z\bigl(L_E(\svect)+k_E+c\bigr)
\]
for some $E\in\mathcal E$ and $c\in\mathbb Z_{>0}$.
\end{theorem}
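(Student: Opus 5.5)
We argue by contradiction, localizing at the height-one prime of a codimension-one component. Let $H$ be a codimension-one irreducible component of $Z(B_F^{\mathbf a})$. Since $H$ is an irreducible hypersurface in $\Cring^r$, it is the zero locus of an irreducible polynomial $h$, and $\mathfrak p=(h)$ is a height-one prime. Suppose $H$ is not of the form $Z(L_E(\svect)+k_E+c)$ with $E\in\mathcal E$ and $c\in\mathbb Z_{>0}$. Then no linear form $L_E(\svect)+k_E+c$ lies in $\mathfrak p$: each such form is irreducible of degree one and generates a height-one prime, so membership in $\mathfrak p$ would force $\mathfrak p=(L_E(\svect)+k_E+c)$. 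Hence hypothesis \eqref{eq:avoid-resolution-hyperplanes} holds for $\mathfrak p$, and Lemma~\ref{lem:global-section-trick} applies.

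The key step is to show that Lemma~\ref{lem:global-section-trick} forces the localized shift quotient to vanish. By the lemma, $F^{\svect}dx\cdot\Dmod_X[\svect]_{\mathfrak p}=F^{\svect+k\one}dx\cdot\Dmod_X[\svect]_{\mathfrak p}$ for all $k\ge0$. Choose $k\geq\max_i a_i$, so that $\mathbf a\le k\one$. Multiplication by $F^{k\one-\mathbf a}$ gives
\[
F^{\svect+k\one}\Dmod_X[\svect]_{\mathfrak p}\subseteq F^{\svect+\mathbf a}\Dmod_X[\svect]_{\mathfrak p}\subseteq F^{\svect}\Dmod_X[\svect]_{\mathfrak p}.
\]
The outer terms coincide, so all three are equal. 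Localization in $R$ is exact and commutes with the side-changing equivalence of \S\ref{sec:side-changing}. Since $dx$ trivializes $\omega_X$ locally, this yields $(M_F^{\mathbf 0,\mathbf a})_{\mathfrak p}=0$ on each chart of an affine cover. As $X$ is quasi-compact, the vanishing holds globally.

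We then conclude from the support description. By Theorem~\ref{thm:relative-properties-Fs}(2), $\Supp_R M_F^{\mathbf 0,\mathbf a}=Z(B_F^{\mathbf a})$. Thus $\mathfrak p\notin\Supp_R$, meaning the generic point of $H$ is not in $Z(B_F^{\mathbf a})$. This contradicts the assumption that $H$ is a component of $Z(B_F^{\mathbf a})$.

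The degenerate case must be handled separately. If $\mathbf a=0$ or $F^{\mathbf a}$ is invertible, then $M_F^{\mathbf 0,\mathbf a}=0$ and there are no components, so the statement holds vacuously.

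The main obstacle I expect is bookkeeping: passing the local-chart equality to the global parameter support, and verifying that ``$\mathfrak p\notin\Supp_R$'' is exactly the needed non-containment. Lemma~\ref{lem:relative-support} handles the latter cleanly, since $M_F^{\mathbf 0,\mathbf a}$ is relative holonomic. The hard analytic input, the descent from the resolution, is already packaged in Lemma~\ref{lem:global-section-trick}.
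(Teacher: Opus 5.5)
Your proof is correct and is essentially the paper's argument. You localize at the height-one prime $\mathfrak p=I(H)$ and use avoidance of the resolution hyperplanes so that Lemma~\ref{lem:global-section-trick} collapses the chain $F^{\svect}dx\cdot\Dmod_X[\svect]_{\mathfrak p}\supseteq F^{\svect+\mathbf a}dx\cdot\Dmod_X[\svect]_{\mathfrak p}\supseteq F^{\svect+k\one}dx\cdot\Dmod_X[\svect]_{\mathfrak p}$. This contradicts $\mathfrak p\in\Supp_R M_F^{\mathbf 0,\mathbf a}=Z(B_F^{\mathbf a})$, exactly as the paper does; your extra remarks (why a linear form in $\mathfrak p$ would force $H$ to be that hyperplane, the vacuous degenerate case, gluing the chart-wise vanishing) only spell out steps the paper treats briefly.
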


\begin{proof}
Let $H$ be such a component and $\mathfrak p=I(H)$. Suppose that $H$ is not one of the stated hyperplanes. Since $\mathfrak p$ has height one and each $L_E(\svect)+k_E+c$ is a nonconstant linear form, condition~\eqref{eq:avoid-resolution-hyperplanes} holds.

Choose $k$ with $\mathbf a\leq k\one$. Lemma~\ref{lem:global-section-trick} identifies the outer terms in
\[
\begin{aligned}
F^{\svect}dx\cdot\Dmod_X[\svect]_{\mathfrak p}
&\supseteq F^{\svect+\mathbf a}dx\cdot\Dmod_X[\svect]_{\mathfrak p}\\
&\supseteq F^{\svect+k\one}dx\cdot\Dmod_X[\svect]_{\mathfrak p}.
\end{aligned}
\]
Hence all three terms agree. After changing sides, this says
\[
\bigl(M_F^{\mathbf0,\mathbf a}\bigr)_{\mathfrak p}=0.
\]
On the other hand, this module is relative holonomic, and Lemma~\ref{lem:relative-support} identifies its parameter support with $Z(B_F^{\mathbf a})$. The generic point $\mathfrak p$ of $H$ belongs to that support, a contradiction.
\end{proof}

This proves Theorem~\ref{Theorem:1.1}. The argument gives the positivity of $c$ directly from the invertible factors in the normal-crossings calculation.
\section{Diagonal specialization and positive translations}\label{sec:specialization}

We first extend van der Veer's fiber nonvanishing and diagonal specialization theorems \cite[Theorems E and F]{vanderVeer2021} from affine space to a smooth affine variety. Relative Kashiwara equivalence gives the first extension, and the kernel calculation from \cite[\S5]{vanderVeer2021} gives the second. We then prove the positive translation theorem and determine the smallest interval containing the diagonal slice.
Throughout this section, $X$ is smooth and affine and $R=\Cring[s_1,\ldots,s_r]$.

\subsection{Nonvanishing of fibers}

\begin{theorem}\label{thm:fiber-nonvanishing}
Let $\Mmod$ be a relative holonomic $\Dmod_X[\svect]$-module. For every $\alpha\in Z(\Ann_R\Mmod)$, with corresponding maximal ideal $\mathfrak m_\alpha\subset R$,
\[
\Mmod\otimes_R R/\mathfrak m_\alpha\neq0.
\]
\end{theorem}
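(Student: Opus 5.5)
The plan is to reduce to affine space via a closed embedding and relative Kashiwara equivalence, and then quote van der Veer's fiber nonvanishing theorem \cite[Theorem E]{vanderVeer2021}, which is stated for relative holonomic $\Dmod_{\Cring^N}[\svect]$-modules.

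\emph{Step 1: the embedding.} Since $X$ is smooth and affine, choose a closed embedding $i:X\hookrightarrow\Cring^N$. Put $\mathcal N=i_+\Mmod$. By relative Kashiwara equivalence \cite[Theorem 1.5]{FS19}, $\mathcal N$ is a relative holonomic $\Dmod_{\Cring^N}[\svect]$-module supported on $X\times\Spec R$. Because $i_+$ is exact, faithful and $R$-linear, an element $b\in R$ kills $\mathcal N$ if and only if it kills $\Mmod$. Indeed, $i_+$ applied to multiplication by $b$ on $\Mmod$ is multiplication by $b$ on $\mathcal N$, and faithfulness shows that one is zero exactly when the other is. Hence
\[
\Ann_R\mathcal N=\Ann_R\Mmod,
\]
and so $\alpha\in Z(\Ann_R\mathcal N)$.

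\emph{Step 2: nonvanishing upstairs.} Apply \cite[Theorem E]{vanderVeer2021} to $\mathcal N$. This gives $\mathcal N\otimes_R R/\mathfrak m_\alpha\neq0$.

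\emph{Step 3: descent.} We need $i_+$ to commute with specialization, that is,
\[
i_+\Mmod\otimes_R R/\mathfrak m_\alpha\simeq i_+\bigl(\Mmod\otimes_R R/\mathfrak m_\alpha\bigr).
\]
Locally, in coordinates with $X=\{x_{n+1}=\cdots=x_N=0\}$, one has $i_+\Mmod=\Mmod\otimes_{\Cring}\Cring[\partial_{n+1},\ldots,\partial_N]$. This description is compatible with $\otimes_R R/\mathfrak m_\alpha$, since $R$ acts only on the factor $\Mmod$. Granting the isomorphism, Step 2 shows that the specialization $\Mmod\otimes_R R/\mathfrak m_\alpha$ has nonzero image under $i_+$, so it is itself nonzero.

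\emph{Main obstacle.} The delicate point is the exact hypothesis of \cite[Theorem E]{vanderVeer2021}. If it requires finite generation or a global presentation on $\Cring^N$, this holds here: $X$ is affine, so $\Mmod$ is generated by global sections, and $\mathcal N$ is coherent on the affine space $\Cring^N$. If instead the theorem is phrased with the support description $Z(\Ann_R)=p_2(\Chrel)$, Lemma~\ref{lem:relative-support} supplies the translation. I would check that relative holonomicity is preserved under $i_+$, because the relative characteristic variety of $\mathcal N$ is the conormal pushforward of that of $\Mmod$, still a product of a Lagrangian and a subvariety of $\Spec R$.
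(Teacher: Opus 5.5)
Your proposal is correct and is essentially the paper's proof: embed $X$ in $\mathbb A^N$, use relative Kashiwara equivalence to see that $i_+\Mmod$ is relative holonomic with $\Ann_R(i_+\Mmod)=\Ann_R\Mmod$ (by faithfulness and $R$-linearity of $i_+$), apply \cite[Theorem E]{vanderVeer2021} upstairs, and descend through $(i_+\Mmod)\otimes_R R/\mathfrak m_\alpha\simeq i_+(\Mmod\otimes_R R/\mathfrak m_\alpha)$. The paper only asserts this last isomorphism, and your local-coordinate justification of it fills in that detail without changing the argument.
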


\begin{proof}
Choose a closed embedding $i:X\hookrightarrow\mathbb A^N$. By relative Kashiwara equivalence, $i_+\Mmod$ is relative holonomic. Since $i_+$ is faithful and $R$-linear,
\[
\Ann_R(i_+\Mmod)=\Ann_R\Mmod.
\]
Moreover, $(i_+\Mmod)\otimes_R R/\mathfrak m_\alpha
\simeq i_+\bigl(\Mmod\otimes_R R/\mathfrak m_\alpha\bigr).$

Theorem E of \cite{vanderVeer2021}, applied on $\mathbb A^N$, says that the left-hand side is nonzero. Therefore the fiber of $\Mmod$ is nonzero.
\end{proof}

\subsection{The specialization map and its kernel}
Let $F=(f_1,\ldots,f_r)$ and $f=\prod_i f_i$ be as before. Put
\[
\Delta=\{s_1=\cdots=s_r\},\qquad
I_\Delta=(s_1-s_r,\ldots,s_{r-1}-s_r),
\]
so that $R/I_\Delta=\Cring[s]$. We identify $s\in\Cring$ with $s\one\in\Delta$ and write $\nabla(\alpha)=\alpha+\one$ for translation on the parameter space.

Consider the modules
\[
\begin{aligned}
N_F&=\mathcal O_X[f^{-1},\svect]F^{\svect},&
M_F&=\Dmod_X[\svect]F^{\svect},\\
N_f&=\mathcal O_X[f^{-1},s]f^s,&
M_f&=\Dmod_X[s]f^s.
\end{aligned}
\]
The $\Dmod_X$-linear shift maps are
\[
\begin{aligned}
\nabla_F\bigl(h(\svect)F^{\svect}\bigr)
 &=h(\svect+\one)F^{\svect+\one},\\
\nabla_f\bigl(q(s)f^s\bigr)&=q(s+1)f^{s+1}.
\end{aligned}
\]
They are semilinear over the parameter rings, with images
\[
\nabla_F(M_F)=\Dmod_X[\svect]F^{\svect+\one},\qquad
\nabla_f(M_f)=\Dmod_X[s]f^{s+1}.
\]
Thus $Q_F=M_F/\nabla_F(M_F)$ and $Q_f=M_f/\nabla_f(M_f)$. Since $X$ is affine, we freely pass between these quasi-coherent sheaves and their modules of global sections when choosing generators.

\begin{lemma}\label{lem:diagonal-exact-sequence}
The map
\[
\eta_F:N_F\longrightarrow N_f,\qquad
h(\svect)F^{\svect}\longmapsto h(s,\ldots,s)f^s
\]
is surjective and $\Dmod_X$-linear. Its kernel is $I_\Delta N_F$, and
\[
\eta_F(M_F)=M_f,\qquad
\eta_F(\nabla_F(M_F))=\nabla_f(M_f).
\]
Consequently, there is a short exact sequence
\begin{equation}\label{eq:diagonal-exact-sequence}
0\longrightarrow K\longrightarrow Q_F/I_\Delta Q_F
\longrightarrow Q_f\longrightarrow0,
\end{equation}
where
\[
K=\frac{(\ker\eta_F\cap M_F)+\nabla_F(M_F)}
        {I_\Delta M_F+\nabla_F(M_F)}.
\]
\end{lemma}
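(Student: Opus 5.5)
The plan is to check the claims about $\eta_F$ directly and then get the exact sequence from a general statement about submodules and quotients.

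First, well-definedness and linearity. $N_F$ is free over $\mathcal O_X[f^{-1}]\otimes R$ on $F^{\svect}$, so every section has a unique expression $h(\svect)F^{\svect}$ with $h\in\mathcal O_X[f^{-1}][\svect]$. Hence $\eta_F$ is a well-defined $\mathcal O_X$-linear map. It is surjective because every $q(s)$ is the diagonal restriction of some $h$; for instance, $h=q(s_r)$ works. Its kernel consists of those $h$ whose diagonal restriction vanishes, and this is exactly $I_\Delta\cdot\mathcal O_X[f^{-1}][\svect]$. To see this, write $s_i=(s_i-s_r)+s_r$ and expand $h$.

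For $\Dmod_X$-linearity it suffices, by \eqref{eq:Fs-action}, to check compatibility with a vector field $v$:
\[
v\bigl(hF^{\svect}\bigr)=\Bigl(v(h)+h\sum_i s_i\tfrac{v(f_i)}{f_i}\Bigr)F^{\svect}.
\]
Restricting to the diagonal gives
\[
\Bigl(v(h)+hs\,\tfrac{v(f)}{f}\Bigr)f^s=v\bigl(h(s,\dots,s)f^s\bigr),
\]
using $\sum_i v(f_i)/f_i=v(f)/f$. Thus $\eta_F$ is $\Dmod_X$-linear. It is also $R$-semilinear with respect to $R\to R/I_\Delta=\Cring[s]$.

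Next, the images. The image of $M_F$ is generated over $\Dmod_X$ by the elements $\eta_F\bigl(s^\alpha F^{\svect}\bigr)=s^{|\alpha|}f^s$. It is therefore $\Dmod_X[s]f^s=M_f$. Similarly, $\eta_F$ sends $\nabla_F(M_F)=\Dmod_X[\svect]F^{\svect+\one}$ onto $\Dmod_X[s]f^{s+1}=\nabla_f(M_f)$, because $F^{\one}=f$. The same computation shows $\eta_F\circ\nabla_F=\nabla_f\circ\eta_F$.

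Finally, the exact sequence. The composite $M_F\to M_f\to Q_f$ is surjective. Its kernel is
\[
\eta_F^{-1}\bigl(\nabla_f M_f\bigr)\cap M_F=(\ker\eta_F\cap M_F)+\nabla_F(M_F),
\]
where the equality holds because $\eta_F(\nabla_F M_F)=\nabla_f M_f$. This kernel contains $I_\Delta M_F+\nabla_F(M_F)$, since $I_\Delta M_F\subseteq\ker\eta_F\cap M_F$. The map therefore factors through
\[
Q_F/I_\Delta Q_F=M_F/\bigl(I_\Delta M_F+\nabla_F M_F\bigr),
\]
and its kernel there is precisely $K$. This gives \eqref{eq:diagonal-exact-sequence}.

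The only step needing care is the kernel computation. One must use that $\ker\eta_F=I_\Delta N_F$ computed in $N_F$, which need not coincide with $I_\Delta M_F$. That discrepancy is exactly why $K$ is written with $\ker\eta_F\cap M_F$ rather than with $I_\Delta M_F$, and no further identification is needed at this stage.
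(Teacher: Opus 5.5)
Your proof is correct and follows essentially the same route as the paper: parameter substitution gives surjectivity and the kernel, and the identity $\sum_i v(f_i)/f_i=v(f)/f$ gives $\Dmod_X$-linearity. Generators give the two image identities, and the kernel of the composite $M_F\to M_f\to Q_f$ gives the exact sequence; you also spell out the preimage identity $\eta_F^{-1}(\nabla_f M_f)\cap M_F=(\ker\eta_F\cap M_F)+\nabla_F(M_F)$, using $\nabla_F(M_F)\subseteq M_F$, which the paper leaves implicit.
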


\begin{proof}
Surjectivity and the description of the kernel follow from substitution of the parameters. For a vector field $v$, formula~\eqref{eq:Fs-action} specializes to
\[
s\left(\sum_i\frac{v(f_i)}{f_i}\right)f^s
=s\frac{v(f)}{f}f^s=v(f^s),
\]
which proves $\Dmod_X$-linearity. The two identities for the images of the cyclic submodules follow from their generators. The induced map $Q_F\to Q_f$ has kernel
\[
\frac{(\ker\eta_F\cap M_F)+\nabla_F(M_F)}{\nabla_F(M_F)}.
\]
Since $I_\Delta M_F\subseteq\ker\eta_F\cap M_F$, the map factors through $Q_F/I_\Delta Q_F$, with kernel $K$ as stated.
\end{proof}

\begin{lemma}\label{lem:diagonal-kernel-support}
With the preceding notation,
\[
\Supp_R K\subseteq\bigcup_{m>0}\nabla^m\bigl(Z(B_F^{\one})\bigr).
\]
\end{lemma}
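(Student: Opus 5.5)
The idea is to show that $K$ is killed by an explicit ideal of $R$ whose zero locus is a finite union of positive translates $\nabla^j(Z(B_F^{\one}))$. Since $K$ is an $R$-module quotient, $\Supp_R K\subseteq Z(\Ann_R K)$, so it suffices to find enough polynomials in $\Ann_R K$. The construction follows the kernel calculation in \cite[\S5]{vanderVeer2021}.

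\emph{Step 1: translating Bernstein--Sato relations.} For $b\in B_F^{\one}$ we have $b(\svect)F^{\svect}\in\Dmod_X[\svect]F^{\svect+\one}$. Apply the $\Dmod_X$-linear semilinear shift $\nabla_F^{-j}$, which is invertible on $N_F$. This gives
\[
b(\svect-j\one)F^{\svect-j\one}\in\Dmod_X[\svect]F^{\svect-(j-1)\one}.
\]
Now take $b_1,\dots,b_m\in B_F^{\one}$ and put
\[
c(\svect)=\prod_{j=1}^m b_j(\svect-j\one).
\]
Composing these inclusions in the order $j=m,m-1,\dots,1$, and using that $R$ is central, yields
\[
c(\svect)\,\Dmod_X[\svect]F^{\svect-m\one}\subseteq M_F.
\]
By \eqref{eq:parameter-translation}, $Z(b_j(\svect-j\one))=Z(b_j)+j\one$. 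Hence the zero set of $c$ is contained in $\bigcup_{j=1}^m(Z(b_j)+j\one)$.

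\emph{Step 2: killing $K$.} By Lemma~\ref{lem:diagonal-exact-sequence}, $\ker\eta_F=I_\Delta N_F$. Since $X$ is affine, $\ker\eta_F\cap M_F$ is a $\Dmod_X[\svect]$-submodule of the coherent module $M_F$, and $\Dmod_X[\svect]$ is noetherian, so this submodule is finitely generated. Write each generator as $x=\sum_{i<r}(s_i-s_r)n_i$ with $n_i\in N_F$. Every element of $N_F=\mathcal O_X[f^{-1},\svect]F^{\svect}$ lies in $\mathcal O_X[\svect]F^{\svect-m\one}\subseteq\Dmod_X[\svect]F^{\svect-m\one}$ for $m$ large. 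So a single $m$ works for all $n_i$ and all generators. Then
\[
c\,x=\sum_{i<r}(s_i-s_r)\,c\,n_i\in I_\Delta M_F,
\]
so $c$ kills the images of the generators in $K$. Because $c$ is central, it kills all of $K$.

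\emph{Step 3: support.} With $m$ fixed as in Step 2, let $\alpha\notin\bigcup_{j=1}^m\nabla^j(Z(B_F^{\one}))$. For each $j$, the point $\alpha-j\one$ lies outside $Z(B_F^{\one})$, so we can choose $b_j\in B_F^{\one}$ with $b_j(\alpha-j\one)\neq0$. The resulting $c$ satisfies $c\in\Ann_R K$ and $c(\alpha)\neq0$. Therefore
\[
Z(\Ann_R K)\subseteq\bigcup_{j=1}^m\nabla^j\bigl(Z(B_F^{\one})\bigr).
\]
This set is Zariski closed, being a finite union of closed sets. Consequently every prime $\mathfrak p\in\Supp_R K$ contains $\Ann_R K$ and lies in this closed set, which gives the claim.

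The main point requiring care is the uniformity of $m$ in Step 2. It relies on finite generation of $\ker\eta_F\cap M_F$, which uses the affineness of $X$ and noetherianity. Without it, one would only obtain a union over all $m$ that is not obviously closed. The bound in the statement, a union over all $m>0$, is weaker and is satisfied in either case.
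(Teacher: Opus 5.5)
Your proof is correct and follows essentially the same route as the paper. Both arguments fix a uniform pole order and build $c(\svect)=\prod_{j}b_j(\svect-j\one)$ from translated functional equations, so that $c\,F^{\svect-m\one}\in M_F$; this annihilates $K$, and the zero locus of these $c$ gives the translates $\nabla^j(Z(B_F^{\one}))$. The differences are cosmetic. You take generators of $\ker\eta_F\cap M_F$ rather than of $K$, which makes the paper's separate handling of the $\nabla_F(W_\nu)$ term unnecessary. You also conclude pointwise rather than through the product ideal $\prod_{j}\tau_{-j\one}(B_F^{\one})$.
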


\begin{proof}
The module $K$ is coherent over $\Dmod_X[\svect]$. Choose finitely many global generators $\overline T_1,\ldots,\overline T_q$. Since $\ker\eta_F=I_\Delta N_F$, after clearing powers of $f$ we may represent them by
\[
T_\nu=h_\nu(\svect)f^{-d}F^{\svect}+\nabla_F(W_\nu),
\qquad
h_\nu(\svect)\in I_\Delta\mathcal O_X(X)[\svect],
\]
with a common integer $d\geq1$ and $W_\nu\in M_F$.

For arbitrary nonzero polynomials $b_j\in B_F^{\one}$, $1\leq j\leq d$, choose functional equations
\[
b_j(\svect)F^{\svect}=P_j(\svect)F^{\svect+\one}.
\]
Substitution of $\svect-j\one$ gives
\[
b_j(\svect-j\one)f^{-j}F^{\svect}
=P_j(\svect-j\one)f^{-j+1}F^{\svect}.
\]
Iterating these equations shows that, for
\[
c(\svect)=\prod_{j=1}^{d}b_j(\svect-j\one),
\]
one has $c(\svect)f^{-d}F^{\svect}\in M_F$. It follows that
\[
c(\svect)h_\nu(\svect)f^{-d}F^{\svect}\in I_\Delta M_F.
\]
Also,
\[
c(\svect)\nabla_F(W_\nu)
=\nabla_F\bigl(c(\svect-\one)W_\nu\bigr)\in\nabla_F(M_F).
\]
Thus $c(\svect)$ annihilates every generator of $K$, and hence annihilates $K$.

Since the $b_j$ were arbitrary, the product ideal
\[
\prod_{j=1}^{d}\tau_{-j\one}(B_F^{\one})
\]
annihilates $K$. Its zero locus is
\[
\bigcup_{j=1}^{d}\bigl(Z(B_F^{\one})+j\one\bigr),
\]
which proves the asserted inclusion.
\end{proof}

\subsection{The diagonal specialization theorem}

\begin{theorem}\label{thm:diagonal-specialization}
Let $F$ be a tuple of nonzero regular functions on a smooth complex affine variety, and assume that $f=\prod_i f_i$ is not invertible. Then
\begin{equation}\label{eq:diagonal-specialization}
\bigl(Z(B_F^{\one})\cap\Delta\bigr)
\setminus\bigcup_{m>0}\nabla^m\bigl(Z(B_F^{\one})\bigr)
\subseteq Z(b_f),
\end{equation}
where $Z(b_f)$ is identified with a subset of $\Delta$.
\end{theorem}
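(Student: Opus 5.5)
Fix a point $\alpha=t\one$ in the left-hand side of \eqref{eq:diagonal-specialization}, with maximal ideal $\mathfrak m_\alpha\subset R$. Then $\mathfrak m_\alpha\supseteq I_\Delta$, and $\mathfrak m_\alpha$ corresponds to the maximal ideal $(s-t)\subset\Cring[s]=R/I_\Delta$. The strategy is to tensor the short exact sequence \eqref{eq:diagonal-exact-sequence} of Lemma~\ref{lem:diagonal-exact-sequence} with $R/\mathfrak m_\alpha$ and use right exactness:
\[
K\otimes_R R/\mathfrak m_\alpha\longrightarrow Q_F\otimes_R R/\mathfrak m_\alpha\longrightarrow Q_f\otimes_{\Cring[s]}\Cring[s]/(s-t)\longrightarrow 0.
\]
Here we use $(Q_F/I_\Delta Q_F)\otimes_R R/\mathfrak m_\alpha=Q_F\otimes_R R/\mathfrak m_\alpha$.

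First we kill the left term. All modules in sight are relative holonomic: $Q_F$ by Theorem~\ref{thm:relative-properties-Fs}, and $K$ because relative holonomic modules form an abelian category. So Lemma~\ref{lem:relative-support} applies to $K$. By Lemma~\ref{lem:diagonal-kernel-support}, $\Supp_R K$ does not contain $\mathfrak m_\alpha$, since $\alpha$ avoids every $\nabla^m(Z(B_F^{\one}))$ with $m>0$. Hence $K_{\mathfrak m_\alpha}=0$, and therefore $K\otimes_R R/\mathfrak m_\alpha=K_{\mathfrak m_\alpha}\otimes R/\mathfrak m_\alpha=0$. It follows that
\[
Q_F\otimes_R R/\mathfrak m_\alpha\simeq Q_f\otimes_{\Cring[s]}\Cring[s]/(s-t).
\]

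Next, the left side is nonzero. Since $\alpha\in Z(B_F^{\one})=Z(\Ann_R Q_F)$, Theorem~\ref{thm:fiber-nonvanishing} gives this directly. So the fiber of $Q_f$ at $s=t$ is nonzero, and in particular $(Q_f)_{(s-t)}\neq0$. Now $Q_f$ is a $\Dmod_X[s]$-module annihilated by $b_f(s)$, because $b_f(s)f^s\in\Dmod_X[s]f^{s+1}$ and $b_f\neq0$ by Bernstein's theorem, which holds on smooth affine varieties. If $b_f(t)\neq0$, then $b_f$ would be a unit in $\Cring[s]_{(s-t)}$, so $(Q_f)_{(s-t)}=0$, a contradiction. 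Therefore $t\in Z(b_f)$.

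The main point needing care is the vanishing $K\otimes R/\mathfrak m_\alpha=0$. It needs $K$ to be a finitely generated $\Dmod_X[\svect]$-module, which holds since $K$ is a subquotient of the coherent module $Q_F/I_\Delta Q_F$, so that support computed by annihilator (Lemma~\ref{lem:diagonal-kernel-support}) controls localization. The annihilator argument already shows directly that an element of $R\setminus\mathfrak m_\alpha$ kills $K$, so we do not actually need holonomicity there. The nonvanishing step is fully handled by Theorem~\ref{thm:fiber-nonvanishing}.
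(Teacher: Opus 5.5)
Your proposal is correct and is essentially the paper's proof. You show $K_{\mathfrak m_\alpha}=0$ via Lemma~\ref{lem:diagonal-kernel-support}, use \eqref{eq:diagonal-exact-sequence} to identify the fiber of $Q_F$ at $\alpha$ with the fiber of $Q_f$ at $s=t$, get nonvanishing from Theorem~\ref{thm:fiber-nonvanishing}, and conclude because $b_f$ annihilates $Q_f$. The differences are only cosmetic: you use right exactness of $\otimes_R R/\mathfrak m_\alpha$ instead of ``localize, then take the fiber,'' and a unit argument in $\Cring[s]_{(s-t)}$ instead of ``$b_f$ acts on the fiber by the scalar $b_f(s_0)$.'' You are also right that the annihilator produced in the kernel lemma suffices, so holonomicity of $K$ is not needed.
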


\begin{proof}
Let $\alpha=s_0\one$ belong to the left-hand side of~\eqref{eq:diagonal-specialization}. Lemma~\ref{lem:diagonal-kernel-support} gives $K_{\mathfrak m_\alpha}=0$. Localizing~\eqref{eq:diagonal-exact-sequence} at $\mathfrak m_\alpha$ and then taking the residue-field fiber yields
\[
Q_F\otimes_R R/\mathfrak m_\alpha
\simeq Q_f\otimes_{\Cring[s]}\Cring[s]/(s-s_0).
\]
By Theorem~\ref{thm:relative-properties-Fs}, $Q_F$ is relative holonomic. Since $\alpha\in Z(\Ann_R Q_F)$, Theorem~\ref{thm:fiber-nonvanishing} makes the left-hand side nonzero. The polynomial $b_f(s)$ annihilates $Q_f$ and acts on the right-hand fiber as the scalar $b_f(s_0)$. Therefore $b_f(s_0)=0$.
\end{proof}

\subsection{The diagonal slice}\label{sec:diagonal-slice}
With $\Delta$ and $\nabla$ as above, put
\[
S=\{t\in\Cring:t\one\in Z(B_F^{\one})\}.
\]

\begin{lemma}\label{lemma:1}
The set $S$ is a finite subset of $\mathbb Q_{<0}$.
\end{lemma}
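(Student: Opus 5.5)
We plan to deduce finiteness and rationality of $S$ from the Sabbah--Gyoja theorem, Theorem~\ref{thm:Sabbah-Gyoja}. That theorem supplies a nonzero polynomial
\[
b(\svect)=\prod_{\ell=1}^{m}\bigl(L_\ell\cdot\svect+\alpha_\ell\bigr)\in B_F^{\one},
\qquad L_\ell\in\mathbb Z_{\geq0}^r\setminus\{0\},\ \alpha_\ell\in\mathbb Q_{>0}.
\]
Since $Z(B_F^{\one})\subseteq Z(b)$, every $t\in S$ satisfies $b(t\one)=0$. Restricting to the diagonal gives
\[
b(t\one)=\prod_{\ell}\bigl(|L_\ell|\,t+\alpha_\ell\bigr),
\qquad |L_\ell|=\sum_i L_{\ell,i}.
\]
Each $|L_\ell|$ is a positive integer because $L_\ell$ is a nonzero vector with nonnegative entries. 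Hence $b(t\one)$ is a nonzero one-variable polynomial, so it has finitely many roots. These roots are $t=-\alpha_\ell/|L_\ell|$, which lie in $\mathbb Q_{<0}$ since $\alpha_\ell>0$. It follows that $S$ is contained in this finite set of negative rationals.

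As an alternative, we could use Theorem~\ref{thm:codim-one-components} together with the cover by translates $W^{(1)}-k\one$. However, that cover is Theorem~\ref{thm:intro-positive-translation}, which is proved later. For this reason we use the Sabbah--Gyoja route, which relies only on results stated before the lemma.

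The only point that needs care is that the restriction $b(t\one)$ does not vanish identically. This is guaranteed because every $L_\ell$ has nonnegative entries and is nonzero, so $|L_\ell|\neq0$. This is exactly where the normalization $L\in\mathbb Z_{\geq0}^r\setminus\{0\}$ and Gyoja's positivity $\alpha>0$ are used. No other step is an obstacle.

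Finally, $S$ might be empty. The lemma still holds in that case, and nonemptiness follows later from $Z(b_f)\subseteq S$.
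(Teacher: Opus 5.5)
Your proposal is correct and follows the paper's proof: restrict the Sabbah--Gyoja product $b(\svect)\in B_F^{\one}$ to the diagonal, where the nonvanishing of $L\cdot\one$ keeps the restriction a nonzero polynomial. Since $S\subseteq Z(b(t\one))$ and every root $-\alpha/(L\cdot\one)$ is a negative rational number, the lemma follows.
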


\begin{proof}
By Theorem~\ref{thm:Sabbah-Gyoja}, there is a nonzero polynomial
\[
b(\svect)=\prod_{(L,\alpha)}(L\cdot\svect+\alpha)\in B_F^{\one},
\]
with $L\in\mathbb Z_{\geq0}^r\setminus\{0\}$ and $\alpha\in\mathbb Q_{>0}$. Its restriction to the diagonal is
\[
b(t\one)=\prod_{(L,\alpha)}\bigl((L\cdot\one)t+\alpha\bigr).
\]
This is a nonzero polynomial whose roots are negative rational numbers, and $S$ is contained in its zero locus.
\end{proof}

\begin{proposition}\label{Proposition:1}
For every $t\in S$, there are $\rho\in Z(b_f)$ and $M\in\mathbb Z_{\geq0}$ such that $t=\rho+M$. In particular,
\begin{equation}\label{eq:diagonal-slice-bound}
t\geq-n+\delta_f\qquad(t\in S).
\end{equation}
\end{proposition}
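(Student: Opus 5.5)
The plan is to argue by descending induction on $t$ inside the finite set $S$, using Theorem~\ref{thm:diagonal-specialization} as the only step. By Lemma~\ref{lemma:1}, $S$ is a finite set of negative rationals, so every nonempty subset of $S$ has a greatest element. This allows a maximal-counterexample argument.

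Fix $t\in S$. There are two cases.

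\emph{Case 1: $t\one$ does not lie in $\bigcup_{m>0}\nabla^m(Z(B_F^{\one}))$.} Then $t\one$ belongs to the left-hand side of~\eqref{eq:diagonal-specialization}. Theorem~\ref{thm:diagonal-specialization} gives $t\in Z(b_f)$, and we take $\rho=t$, $M=0$.

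\emph{Case 2: $t\one=\beta+m\one$ for some $\beta\in Z(B_F^{\one})$ and some integer $m>0$.} The key observation is that $\beta=(t-m)\one$ is again diagonal, so $t-m\in S$. This alone only moves us downward, which does not obviously terminate, so I organize the induction from below. Suppose the claim fails, and let $t$ be the \emph{least} element of $S$ that is not of the form $\rho+M$. Then $t$ cannot fall under Case 1, so Case 2 applies and gives $t-m\in S$ with $t-m<t$. By minimality, $t-m=\rho+M'$ with $\rho\in Z(b_f)$ and $M'\geq0$. Hence $t=\rho+(M'+m)$, a contradiction. Since $S$ is finite, a least counterexample exists whenever the counterexample set is nonempty, so the minimal-counterexample argument is legitimate.

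For the inequality~\eqref{eq:diagonal-slice-bound}, write $t=\rho+M$ with $\rho\in Z(b_f)$ and $M\geq0$. Then $t\geq\rho\geq-n+\delta_f$ by~\eqref{eq:intro-root-bound}. This uses Saito's estimate together with the fact that the remaining root $-1$ also satisfies $-1\geq-n+\delta_f$, since $\delta_f\leq n-1$.

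The only genuine content is Theorem~\ref{thm:diagonal-specialization} together with the finiteness from Lemma~\ref{lemma:1}. The step needing care is ensuring that the points produced by the translation are diagonal, which holds because $\nabla$ translates by $\one$. I do not expect any other obstacle.
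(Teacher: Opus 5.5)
Your proposal is correct and is essentially the paper's argument: the paper takes the least element $\rho$ of the finite set $S\cap(t-\mathbb Z_{\geq0})$ and uses Theorem~\ref{thm:diagonal-specialization} to force $\rho\in Z(b_f)$, while you run the same descent as a least-counterexample argument over all of $S$. Your opening mention of greatest elements and a ``maximal-counterexample'' argument is inconsistent with what follows and should be dropped, since the argument you actually carry out, correctly, uses the least element.
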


\begin{proof}
Choose the least element $\rho$ of the finite nonempty set
\[
S\cap(t-\mathbb Z_{\geq0}).
\]
If $\rho\notin Z(b_f)$, Theorem~\ref{thm:diagonal-specialization} implies that $\rho\one\in\nabla^m(Z(B_F^{\one}))$ for some $m>0$. Hence $(\rho-m)\one\in Z(B_F^{\one})$, so $\rho-m$ belongs to the same set and is smaller than $\rho$, a contradiction. Thus $\rho\in Z(b_f)$ and $t-\rho\in\mathbb Z_{\geq0}$. The bound follows from~\eqref{eq:intro-root-bound}.
\end{proof}
\subsection{Positive translations}\label{sec:positive-translations}
We now prove the refinement of Theorem~\ref{thmbvdwz}(2) stated in the introduction. For $\mathbf a\in\mathbb Z_{\geq0}^r$, write
\[
W_{\mathbf a}=Z(B_F^{\mathbf a}),\qquad
W_{\mathbf a}^{(1)}=\bigcup_{\substack{H\text{ an irreducible component of }W_{\mathbf a}\\
                                    \operatorname{codim}H=1}}H.
\]

\begin{proposition}[Positive translation]\label{prop:positive-translation}
Assume that $F^{\mathbf a}$ is not invertible. There exists an integer $q\geq0$ such that
\begin{equation}\label{eq:positive-cover}
W_{\mathbf a}\subseteq\bigcup_{k=0}^{q}
\bigl(W_{\mathbf a}^{(1)}-k\mathbf a\bigr).
\end{equation}
Consequently, every irreducible component $C$ of $W_{\mathbf a}$ of codimension at least two satisfies $C+k\mathbf a\subseteq H$ for some integer $k\geq1$ and some codimension-one component $H$ of $W_{\mathbf a}$.
\end{proposition}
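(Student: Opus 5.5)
The plan follows the strategy announced in the introduction: a pure module $\widetilde M\supseteq M_F=\Dmod_X[\svect]F^{\svect}$ whose $\mathbf a$-shift quotient has parameter support exactly $W_{\mathbf a}^{(1)}$, together with a stabilization argument on iterated shifts.

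\textbf{Step 1: the pure extension.} Take $\widetilde M$ to be the maximal tame pure extension of $M_F$ from \cite[\S4.4]{BvdVWZ21b}. This is a $\Dmod_X[\svect]$-submodule of $N_F=\mathcal O_X[f^{-1},\svect]F^{\svect}$ with the following properties.
\begin{enumerate}
\item It is coherent and contains $M_F$.
\item It is stable under the semilinear shift $\nabla_{\mathbf a}\colon h(\svect)F^{\svect}\mapsto h(\svect+\mathbf a)F^{\svect+\mathbf a}$, in the sense that $\nabla_{\mathbf a}(\widetilde M)\subseteq\widetilde M$.
\item The quotient $\widetilde Q=\widetilde M/\nabla_{\mathbf a}\widetilde M$ is $(n+1)$-pure.
\item The quotient $\widetilde M/M_F$ has grade $\geq n+2$, i.e.\ parameter support of codimension $\geq2$.
\end{enumerate}
Purity together with~\eqref{eq:grade-parameter-support} forces $\Supp_R\widetilde Q$ to be pure of codimension one. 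Comparing $M_F^{\mathbf0,\mathbf a}$ with $\widetilde Q$ via the grade-$\geq n+2$ discrepancy then gives $\Supp_R\widetilde Q=W_{\mathbf a}^{(1)}$, as the introduction states.

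\textbf{Step 2: iterating the shift.} Set $\widetilde M_k=\nabla_{\mathbf a}^k\widetilde M$. The graded pieces satisfy
\[
\widetilde M_k/\widetilde M_{k+1}\cong\nabla_{\mathbf a}^k(\widetilde Q),
\]
where $\nabla_{\mathbf a}^k$ twists the $R$-structure by $\tau$. By~\eqref{eq:parameter-translation}, this piece has support $W_{\mathbf a}^{(1)}-k\mathbf a$. Now consider the induced maps on the discrepancy $\widetilde M/M_F$. For each $k$ there is a natural map
\[
\phi_k\colon \widetilde M/\widetilde M_k\;\longrightarrow\;\widetilde M/(M_F+\widetilde M_k),
\]
and the submodules $(M_F+\widetilde M_k)/\widetilde M_k$ measure how much of $M_F$ survives modulo deep shifts. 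Equivalently, consider the increasing chain of submodules
\[
\ker\bigl(\widetilde M/M_F\xrightarrow{\ \nabla_{\mathbf a}^k\ }\widetilde M/M_F\bigr)
\]
(the maps are well defined up to the semilinear twist). By coherence of $\widetilde M/M_F$, together with noetherianity of $\Dmod_X[\svect]$ over the affine $X$, this chain stabilizes at some $k=q$.

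\textbf{Step 3: the covering.} Let $\alpha\in W_{\mathbf a}$; by Theorem~\ref{thm:relative-properties-Fs}, $\alpha\in\Supp_R M_F^{\mathbf0,\mathbf a}$. Suppose for contradiction that $\alpha\notin W_{\mathbf a}^{(1)}-k\mathbf a$ for $0\le k\le q$. Then localizing at $\mathfrak m_\alpha$ kills the graded pieces for $k\le q$, so $(\widetilde M/\widetilde M_{q+1})_{\mathfrak m_\alpha}=0$, i.e.\ $\widetilde M_{\mathfrak m_\alpha}=(\widetilde M_{q+1})_{\mathfrak m_\alpha}$. Iterating this with the stabilization from Step 2 should give $\widetilde M_{\mathfrak m_\alpha}=(\widetilde M_k)_{\mathfrak m_\alpha}$ for all $k$. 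Choosing $k$ with $\widetilde M_k\subseteq\nabla_F(M_F)\subseteq\Dmod_X[\svect]F^{\svect+\mathbf a}$ then yields $(M_F^{\mathbf0,\mathbf a})_{\mathfrak m_\alpha}=0$, a contradiction. Such a $k$ exists because $\widetilde M\subseteq f^{-d}M_F$ for some $d$ (coherence inside $N_F$) and because of the $b$-function argument of Lemma~\ref{lem:diagonal-kernel-support}, this time applied with shift $\mathbf a$.

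\textbf{Consequence and main obstacle.} The ``consequently'' part follows by irreducibility. If $C$ has codimension $\geq2$, then $C\subseteq W^{(1)}_{\mathbf a}-k\mathbf a$ for a single $k$, so $C+k\mathbf a\subseteq H$ for some codimension-one component $H$. Moreover $k\neq0$, since otherwise $C\subseteq H$ would contradict $C$ being a component. I expect Step 3 to be the main obstacle: the vanishing at $\mathfrak m_\alpha$ must hold for the localized shifts simultaneously for all $k$, and this requires that the translated supports $W^{(1)}_{\mathbf a}-k\mathbf a$ for $k>q$ contribute nothing new. My first attempt would make the stabilization statement precise as $\nabla^{q}$ killing the $\nabla$-torsion of $\widetilde M/M_F$, and then use that $\widetilde M/M_F$ has support of codimension $\geq2$ contained in finitely many translates.
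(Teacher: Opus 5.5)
Your Steps 1 and 2 agree with the paper: the maximal tame pure extension, $\Supp_R\widetilde Q=W_{\mathbf a}^{(1)}$, the graded pieces $\widetilde M_k/\widetilde M_{k+1}$ with support $W_{\mathbf a}^{(1)}-k\mathbf a$, and stabilization of $\ker\overline\tau^{m}$ on $\widetilde M/M_F$. Step 3, however, has a genuine gap, for two reasons.

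\emph{The iteration does not work.} You cannot pass from $(\widetilde M/\widetilde M_{q+1})_{\mathfrak m_\alpha}=0$ to $\widetilde M_{\mathfrak m_\alpha}=(\widetilde M_k)_{\mathfrak m_\alpha}$ for all $k$. Since $\nabla_{\mathbf a}$ is semilinear, applying it moves localization at $\mathfrak m_\beta$ to localization at $\mathfrak m_{\beta-\mathbf a}$, so you cannot iterate at a fixed point. By your own filtration, $(\widetilde M/\widetilde M_K)_{\mathfrak m_\alpha}=0$ holds exactly when $\alpha\notin W_{\mathbf a}^{(1)}-k\mathbf a$ for all $0\le k<K$. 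Your hypothesis does not give this for $K>q+1$.

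\emph{The containment is not justified.} You need some $K$ with $\widetilde M_K\subseteq\Dmod_X[\svect]F^{\svect+\mathbf a}$. The inclusion $\widetilde M\subseteq\Dmod_X[\svect]F^{\svect-\ell\one}$ only gives $\widetilde M_K\subseteq\Dmod_X[\svect]F^{\svect-\ell\one+K\mathbf a}$. Coordinates with $a_i=0$ are never improved by the shift. The $b$-function argument of Lemma~\ref{lem:diagonal-kernel-support} only shows that $\widetilde M/M_F$ is $R$-torsion, which yields no inclusion of shifted submodules. What you would really need is nilpotency of $\overline\tau$ on $\widetilde M/M_F$, which is stronger than the kernel stabilization you proved. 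If every $a_i>0$ (e.g.\ $\mathbf a=\one$), the containment does hold for $K\geq\ell+1$, and the subquotient argument below then gives the covering with $q=K-1$ without any stabilization. But the proposition is stated for all $\mathbf a$.

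The missing idea is the one consequence of stabilization that the paper extracts. It works directly at level $q+1$, with no contradiction argument. Let $w\in M_F\cap\widetilde M_{q+1}$ and write $w=\nabla_{\mathbf a}^{q+1}z$ with $z\in\widetilde M$. The class of $z$ lies in $\ker\overline\tau^{q+1}=\ker\overline\tau^{q}$, so $\nabla_{\mathbf a}^{q}z\in M_F$ and hence $w\in\nabla_{\mathbf a}M_F$. This gives $M_F\cap\widetilde M_{q+1}\subseteq\nabla_{\mathbf a}M_F$. Therefore $M_F^{\mathbf0,\mathbf a}=M_F/\nabla_{\mathbf a}M_F$ is a quotient of $M_F/(M_F\cap\widetilde M_{q+1})$, which embeds in $\widetilde M/\widetilde M_{q+1}$. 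Exactness of localization then gives $W_{\mathbf a}\subseteq\Supp_R(\widetilde M/\widetilde M_{q+1})=\bigcup_{k=0}^{q}(W_{\mathbf a}^{(1)}-k\mathbf a)$. With this step in place, your derivation of the consequence ($k\geq1$ for components of codimension at least two) is correct and matches the paper.
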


\begin{proof}
We work with global sections on the affine variety $X$. Set
\[
N=\Dmod_X[\svect]F^{\svect},\qquad
\mathscr E=\mathcal O_X[f^{-1}][\svect]F^{\svect}.
\]
The map
\[
\tau\bigl(g(x,\svect)F^{\svect}\bigr)
=g(x,\svect+\mathbf a)F^{\mathbf a}F^{\svect}
\]
is an invertible semilinear map on $\mathscr E$. It commutes with the $\Dmod_X$-action and satisfies $\tau N\subseteq N$.

Let $\widehat L\subseteq\mathscr E$ be the full maximal tame pure extension of $N$. By \cite[\S4.4]{BvdVWZ21b}, it is the largest finitely generated $\Dmod_X[\svect]$-submodule containing $N$ for which
\[
j(\widehat L/N)\geq n+2.
\]
Since $\widehat L$ is finitely generated, it lies in $\Dmod_X[\svect]F^{\svect-\ell\one}$ for some $\ell\geq0$. Thus $\widehat L$ and its subquotients are relative holonomic. By \cite[Lemma 4.4.1]{BvdVWZ21b}, we have $\tau\widehat L\subseteq\widehat L$, and the quotient
\[
P=\widehat L/\tau\widehat L
\]
is relative holonomic and $(n+1)$-pure. Moreover, $\Supp_R P$ has pure codimension one by \cite[Lemma 4.4.2(iii)]{BvdVWZ21b}.

We first identify this support. The natural map $M=N/\tau N\to P$ has kernel and cokernel
\[
K_{\mathbf a}=\frac{N\cap\tau\widehat L}{\tau N},\qquad
C_{\mathbf a}=\frac{\widehat L}{N+\tau\widehat L}.
\]
The first is a submodule of $\tau\widehat L/\tau N$, and the second is a quotient of $\widehat L/N$. Translation of the parameters preserves grade, so
\[
j(K_{\mathbf a}),\ j(C_{\mathbf a})\geq n+2.
\]
Both modules are relative holonomic. By~\eqref{eq:grade-parameter-support}, their parameter supports have codimension at least two. Hence $M\to P$ is an isomorphism after localization at every height-one prime of $R$. Since $\Supp_R M=W_{\mathbf a}$ and every irreducible component of $\Supp_R P$ has codimension one, the two supports have exactly the same codimension-one components. Therefore
\begin{equation}\label{eq:pure-extension-divisorial-support}
\Supp_R P=W_{\mathbf a}^{(1)}.
\end{equation}

To obtain a finite covering in the required direction, put $Q=\widehat L/N$. The map $\tau$ induces a semilinear endomorphism $\overline\tau$ of $Q$. Each $\ker\overline\tau^{m}$ is a $\Dmod_X[\svect]$-submodule, and these kernels form an ascending chain. Since $Q$ is noetherian, there exists $q\geq0$ such that
\begin{equation}\label{eq:shift-kernel-stabilization}
\ker\overline\tau^{q+1}=\ker\overline\tau^{q}.
\end{equation}
If $z\in\widehat L$ satisfies $\tau^{q+1}z\in N$, then~\eqref{eq:shift-kernel-stabilization} gives $\tau^qz\in N$. Consequently,
\[
N\cap\tau^{q+1}\widehat L\subseteq\tau N.
\]
The module $N/(N\cap\tau^{q+1}\widehat L)$ therefore surjects onto $M$ and injects into $\widehat L/\tau^{q+1}\widehat L$. Exactness of localization gives
\[
W_{\mathbf a}\subseteq
\Supp_R\bigl(\widehat L/\tau^{q+1}\widehat L\bigr).
\]

The module on the right has a finite filtration with factors
\[
\tau^k\widehat L/\tau^{k+1}\widehat L,\qquad 0\leq k\leq q.
\]
An annihilator $b(\svect)$ of $P$ becomes $b(\svect+k\mathbf a)$ on the $k$th factor. Since $\tau^k$ is invertible on $\mathscr E$, this identifies the corresponding annihilator ideals by parameter translation. Thus~\eqref{eq:pure-extension-divisorial-support} gives
\[
\Supp_R\bigl(\tau^k\widehat L/\tau^{k+1}\widehat L\bigr)
=\Supp_R P-k\mathbf a
=W_{\mathbf a}^{(1)}-k\mathbf a.
\]
Taking the union of these supports proves~\eqref{eq:positive-cover}.

An irreducible component $C$ of $W_{\mathbf a}$ is contained in one member of this finite union, and hence in $H-k\mathbf a$ for some codimension-one component $H$ of $W_{\mathbf a}$. If $\operatorname{codim}C\geq2$, then $k=0$ would give $C\subseteq H$, contradicting maximality of $C$ as an irreducible component of $W_{\mathbf a}$. Thus $k\geq1$.
\end{proof}

Proposition~\ref{prop:positive-translation} proves Theorem~\ref{thm:intro-positive-translation}. 

\subsection{The diagonal interval}\label{sec:diagonal-interval}
Retain the finite set $S$ from Section~\ref{sec:diagonal-slice}, and put
\[
\rho_- = \min Z(b_f),\qquad \rho_+ = \max Z(b_f).
\]

\begin{theorem}[The diagonal interval]\label{thm:diagonal-interval}
One has
\begin{equation}\label{eq:diagonal-interval}
Z(b_f)\subseteq S\subseteq[\rho_-,\rho_+].
\end{equation}
Both endpoints belong to $S$. Equivalently,
\[
\min S=\rho_-,\qquad
\max S=\rho_+=-\operatorname{lct}_X(f).
\]
\end{theorem}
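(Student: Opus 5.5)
We prove the three pieces of \eqref{eq:diagonal-interval} separately. For the inclusion $Z(b_f)\subseteq S$ we specialize functional equations along the diagonal. Take any $b(\svect)\in B_F^{\one}$ with a functional equation $b(\svect)F^{\svect}=P(\svect)F^{\svect+\one}$. Applying the map $\eta_F$ of Lemma~\ref{lem:diagonal-exact-sequence} gives
\[
b(s\one)f^s=P(s\one)f^{s+1}\qquad\text{in } M_f .
\]
Hence $b(s\one)$ lies in $(b_f)$, so $b_f(s)$ divides $b(s\one)$. Now let $\rho\in Z(b_f)$ and suppose $\rho\one\notin Z(B_F^{\one})$. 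Then some $b\in B_F^{\one}$ has $b(\rho\one)\neq0$, which contradicts the divisibility. Thus $\rho\in S$, and both $\rho_\pm$ lie in $S$ as soon as the outer inclusion is established.

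For the lower bound, Proposition~\ref{Proposition:1} writes every $t\in S$ as $t=\rho+M$ with $\rho\in Z(b_f)$ and $M\geq0$. Hence $t\geq\rho\geq\rho_-$.

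For the upper bound $t\leq\rho_+$ we use the identification $\rho_+=-\operatorname{lct}_X(f)$ from \cite{BudurMustataSaito2006}. Let $t\in S$. Proposition~\ref{prop:positive-translation} with $\mathbf a=\one$ gives some $k\in\{0,\dots,q\}$ with $(t+k)\one\in W_{\one}^{(1)}$. By Theorem~\ref{thm:codim-one-components}, this means
\[
L_E\bigl((t+k)\one\bigr)+k_E+c=0
\]
for some $E\in\mathcal E$ and some integer $c\geq1$. Solving,
\[
t=-\frac{k_E+c}{L_E(\one)}-k\leq-\frac{k_E+1}{N_E},
\]
where $N_E=L_E(\one)=\operatorname{ord}_E(f\circ\mu)$. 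For a log resolution,
\[
\operatorname{lct}_X(f)=\min_E\frac{k_E+1}{N_E},
\]
so $t\leq-\operatorname{lct}_X(f)=\rho_+$. Since $\rho_\pm\in Z(b_f)\subseteq S$, the minimum and maximum of $S$ are exactly $\rho_-$ and $\rho_+$.

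The main point to get right is the upper bound. Since $k\geq0$, the translation of Proposition~\ref{prop:positive-translation} moves $t$ to a larger value $t+k$. So the hyperplane bound must be applied to $t+k$ and then transferred back down to $t$, which is exactly what the displayed computation does. The other things to check are the standard log-resolution formula for the lct and that $\mathcal E$ is nonempty, which holds because $f$ is not invertible. Everything else is direct.
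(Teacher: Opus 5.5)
Your proposal is correct and follows the paper's proof essentially step for step. You obtain $Z(b_f)\subseteq S$ by diagonal specialization of functional equations and the lower bound from Proposition~\ref{Proposition:1}. For the upper bound you apply Proposition~\ref{prop:positive-translation} with $\mathbf a=\one$, then Theorem~\ref{thm:codim-one-components} (the bound $c\geq1$), and finally the log-resolution formula $\operatorname{lct}_X(f)=\min_E (k_E+1)/L_E(\one)$ together with $\rho_+=-\operatorname{lct}_X(f)$.
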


\begin{proof}
For every $b(\svect)\in B_F^{\one}$, diagonal specialization of its functional equation gives
\[
b(t\one)f^t\in\Dmod_X[t]f^{t+1}.
\]
Hence $b_f(t)$ divides $b(t\one)$. As this holds for every $b\in B_F^{\one}$, we obtain $Z(b_f)\subseteq S$.

For the lower bound, let $t\in S$. Proposition~\ref{Proposition:1} gives $t=\rho+m$ with $\rho\in Z(b_f)$ and $m\in\mathbb Z_{\geq0}$. Thus $t\geq\rho_-$.

For the upper bound, apply Proposition~\ref{prop:positive-translation} with $\mathbf a=\one$. There are $k\geq0$ and a codimension-one component $H$ of $Z(B_F^{\one})$ such that $(t+k)\one\in H$. Fix the log resolution from the introduction. By Theorem~\ref{Theorem:1.1},
\[
H=Z\bigl(L_E(\svect)+k_E+c\bigr),\qquad c\in\mathbb Z_{>0},
\]
for some $E\in\mathcal E$. Set $N_E=L_E(\one)>0$. Then
\begin{equation}\label{eq:diagonal-upper-endpoint}
t=-\frac{k_E+c}{L_E(\mathbf 1)}-k
\leq-\frac{k_E+1}{L_E(\mathbf 1)}
\leq-\operatorname{lct}_X(f)=\rho_+.
\end{equation}
Here we used the log-resolution formula
\[
\operatorname{lct}_X(f)
=\min_{E\in\mathcal E}\frac{k_E+1}{L_E(\one)}
\]
and the largest-root characterization of the log canonical threshold \cite[Theorem 2]{BudurMustataSaito2006}. This proves the interval inclusion. The inclusion $Z(b_f)\subseteq S$ gives $\rho_-,\rho_+\in S$, so both bounds are attained.
\end{proof}

Together with Proposition~\ref{Proposition:1} and  Lemma~\ref{lemma:1}, this proves Theorem~\ref{thm:intro-diagonal-interval}.
We now prove Theorem~\ref{Theorem:1.2} using the estimate for the diagonal slice obtained. A filtration by unit shifts transfers this estimate first to each coordinate shift and then to an arbitrary nonnegative integral shift.

\subsection{Filtrations by unit shifts}
The following decomposition is the lattice-path form of \cite[Proposition 4.7]{Budur2015BSlocal}.

\begin{lemma}\label{lem:lattice-path}
Let $\mathbf a\in\mathbb Z_{\geq0}^r$, and choose a monotone lattice path
\[
\mathbf0=\boldsymbol\beta^0,\boldsymbol\beta^1,\ldots,
\boldsymbol\beta^m=\mathbf a,\qquad
\boldsymbol\beta^{j+1}=\boldsymbol\beta^j+\mathbf e_{i_j}.
\]
Then
\begin{equation}\label{eq:lattice-path-support}
Z(B_F^{\mathbf a})
=\bigcup_{j=0}^{m-1}
Z\bigl(\tau_{\boldsymbol\beta^j}(B_F^{\mathbf e_{i_j}})\bigr)
=\bigcup_{j=0}^{m-1}
\bigl(Z(B_F^{\mathbf e_{i_j}})-\boldsymbol\beta^j\bigr).
\end{equation}
\end{lemma}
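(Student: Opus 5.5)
We argue with the filtration of $M_F^{\mathbf0,\mathbf a}$ induced by the chosen path. For $0\le j\le m$ set $N_j=\Dmod_X[\svect]F^{\svect+\boldsymbol\beta^j}$. Since $\boldsymbol\beta^j\le\boldsymbol\beta^{j+1}$, multiplication by $f_{i_j}$ gives $N_{j+1}\subseteq N_j$. The chain $N_0\supseteq N_1\supseteq\cdots\supseteq N_m$ therefore induces a finite filtration of $M_F^{\mathbf0,\mathbf a}=N_0/N_m$. Its successive quotients are
\[
N_j/N_{j+1}=M_F^{\boldsymbol\beta^j,\boldsymbol\beta^j+\mathbf e_{i_j}}.
\]
All these modules are relative holonomic by Theorem~\ref{thm:relative-properties-Fs}(2).

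The first step is to use the short exact sequences
\[
0\to N_{j+1}/N_m\to N_j/N_m\to N_j/N_{j+1}\to 0.
\]
Localization in $R$ is exact, so parameter support is additive over extensions in the set-theoretic sense: $\Supp_R$ of the middle term is the union of the supports of the outer terms. Inducting down from $j=m-1$ to $j=0$ gives
\[
\Supp_R M_F^{\mathbf0,\mathbf a}=\bigcup_{j}\Supp_R M_F^{\boldsymbol\beta^j,\boldsymbol\beta^{j+1}}.
\]
By Lemma~\ref{lem:relative-support}, or directly by Theorem~\ref{thm:relative-properties-Fs}(2), each parameter support equals the zero locus of the corresponding Bernstein--Sato ideal $B_F^{\boldsymbol\beta^j,\boldsymbol\beta^{j+1}}$. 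The left side is therefore $Z(B_F^{\mathbf a})$.

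The second step is to identify each piece by translation. By~\eqref{eq:parameter-translation} with $\boldsymbol\beta=\boldsymbol\beta^j$,
\[
B_F^{\boldsymbol\beta^j,\boldsymbol\beta^j+\mathbf e_{i_j}}=\tau_{\boldsymbol\beta^j}(B_F^{\mathbf e_{i_j}}),
\]
with zero locus $Z(B_F^{\mathbf e_{i_j}})-\boldsymbol\beta^j$. This identity comes from the semilinear isomorphism $hF^{\svect}\mapsto h(\svect+\boldsymbol\beta)F^{\svect+\boldsymbol\beta}$ of $\mathcal O_X[f^{-1},\svect]F^{\svect}$. That isomorphism maps $M_F^{\mathbf0,\mathbf e_{i_j}}$ onto $M_F^{\boldsymbol\beta^j,\boldsymbol\beta^j+\mathbf e_{i_j}}$ and twists annihilators by $\tau$. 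Substituting these identifications yields both equalities in~\eqref{eq:lattice-path-support}.

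The only point needing care is the support-theoretic statement. Working with annihilators directly only yields the inclusion of the product ideal in $B_F^{\mathbf a}$ and of $B_F^{\mathbf a}$ in the intersection, which gives the zero-locus equality anyway; working with $\Supp_R$ avoids this. The other check is that $\Supp_R$ equals $Z(\Ann_R)$ for modules that are not finitely generated over $R$. This is exactly what Lemma~\ref{lem:relative-support} supplies for relative holonomic modules, so I expect no real obstacle beyond invoking it.
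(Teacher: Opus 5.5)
Your proposal is correct and follows the paper's proof essentially verbatim: you filter $M_F^{\mathbf0,\mathbf a}$ along the path, use exactness of localization to get the union of parameter supports, identify those supports with zero loci via Theorem~\ref{thm:relative-properties-Fs}, and translate via~\eqref{eq:parameter-translation}. Your side remark is also right: the sandwich $\prod_j B_F^{\boldsymbol\beta^j,\boldsymbol\beta^{j+1}}\subseteq B_F^{\mathbf a}\subseteq\bigcap_j B_F^{\boldsymbol\beta^j,\boldsymbol\beta^{j+1}}$ already gives the equality of zero loci without any appeal to relative holonomicity.
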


\begin{proof}
The path gives a descending chain of cyclic modules
\[
\Dmod_X[\svect]F^{\svect+\boldsymbol\beta^0}
\supseteq\cdots\supseteq
\Dmod_X[\svect]F^{\svect+\boldsymbol\beta^m}.
\]
Its successive quotients are $M_F^{\boldsymbol\beta^j,\boldsymbol\beta^{j+1}}$. Parameter support is the union of the supports of these quotients, by exactness of localization. Theorem~\ref{thm:relative-properties-Fs} identifies these supports with their Bernstein--Sato zero loci, and~\eqref{eq:parameter-translation} gives the stated translations.
\end{proof}

\begin{lemma}\label{lem:unit-shift-bound}
Let $H_0=Z(L(\svect)+d)$ be a codimension-one component of $Z(B_F^{\mathbf e_i})$, where $L(\svect)=\sum_j l_js_j$, $l_j\geq0$, and $L(\one)>0$. Then
\begin{equation}\label{equation:1}
d\leq L(\mathbf e_i)+(n-1-\delta_f)L(\one).
\end{equation}
\end{lemma}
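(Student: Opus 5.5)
The idea is to transport $H_0$ into $Z(B_F^{\one})$ by a suitable parameter translation, intersect the translate with the diagonal $\Delta$, and then apply the lower bound~\eqref{eq:diagonal-slice-bound} for the diagonal slice $S$ from Proposition~\ref{Proposition:1}. The choice of translation matters: it fixes the constant term, and hence whether we get the sharp inequality~\eqref{equation:1} or only the weaker $d\le(n-\delta_f)L(\one)$.

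\emph{Step 1: choose the lattice path.} Apply Lemma~\ref{lem:lattice-path} with $\mathbf a=\one$ and a monotone lattice path that first runs from $\mathbf0$ to $\one-\mathbf e_i$ through the coordinate directions $j\neq i$, and only at the last step adds $\mathbf e_i$. The last step has $\boldsymbol\beta^{r-1}=\one-\mathbf e_i$ and $i_{r-1}=i$, so~\eqref{eq:lattice-path-support} gives
\[
Z(B_F^{\mathbf e_i})-(\one-\mathbf e_i)\subseteq Z(B_F^{\one}).
\]
This uses only the equality in Lemma~\ref{lem:lattice-path}, which holds for any nonnegative path, so no non-invertibility hypothesis on the individual $f_j$ is needed here. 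In particular the translated hyperplane
\[
H_0-(\one-\mathbf e_i)
=Z\bigl(L(\svect+\one-\mathbf e_i)+d\bigr)
=Z\bigl(L(\svect)+L(\one)-L(\mathbf e_i)+d\bigr)
\]
is contained in $Z(B_F^{\one})$.

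\emph{Step 2: intersect with the diagonal.} Since $L(\one)>0$, this hyperplane meets $\Delta$ in exactly one point $t\one$, namely
\[
t=-\frac{d+L(\one)-L(\mathbf e_i)}{L(\one)}.
\]
Hence $t\in S$.

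\emph{Step 3: apply the slice bound.} Proposition~\ref{Proposition:1} gives $t\geq-n+\delta_f$. Multiplying by $-L(\one)<0$ yields
\[
d+L(\one)-L(\mathbf e_i)\leq(n-\delta_f)L(\one),
\]
which rearranges to~\eqref{equation:1}.

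The main obstacle is Step 1, specifically placing $\mathbf e_i$ last in the path. If it were placed first, we would only obtain $H_0\subseteq Z(B_F^{\one})$ and the bound $d\le(n-\delta_f)L(\one)$, which is weaker than~\eqref{equation:1} whenever $L(\mathbf e_i)<L(\one)$. The remaining steps are bookkeeping.
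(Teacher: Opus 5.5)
Your proposal is correct and follows the paper's proof essentially step for step. You choose a lattice path from $\mathbf 0$ to $\one$ ending with $\mathbf e_i$, use Lemma~\ref{lem:lattice-path} to place $H_0-(\one-\mathbf e_i)$ in $Z(B_F^{\one})$, intersect with the diagonal, and apply Proposition~\ref{Proposition:1}.
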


\begin{proof}
Choose a path from $\mathbf0$ to $\one$ whose last step is $\mathbf e_i$. The starting point of that step is $\one-\mathbf e_i$. Lemma~\ref{lem:lattice-path} implies
\[
H_0-(\one-\mathbf e_i)\subseteq Z(B_F^{\one}).
\]
The diagonal meets this translated hyperplane at $t\one$, where
\[
t=-\frac{d+L(\one)-L(\mathbf e_i)}{L(\one)}.
\]
Since $t\in S$, Proposition~\ref{Proposition:1} gives $t\geq-n+\delta_f$. Rearranging proves~\eqref{equation:1}.
\end{proof}

\subsection{Proof of the main estimate}

\begin{proof}[Proof of Theorem~\ref{Theorem:1.2}]
Let $H=Z(L_E(\svect)+k_E+c)$ be the given component. Choose a lattice path from $\mathbf0$ to $\mathbf a$. By~\eqref{eq:lattice-path-support}, for some step $j$ the translate
\[
H_0=H+\boldsymbol\beta^j
=Z\bigl(L_E(\svect)+k_E+c-L_E(\boldsymbol\beta^j)\bigr)
\]
is a codimension-one component of $Z(B_F^{\mathbf e_{i_j}})$. Here a codimension-one member of the finite union must be a component of one of its terms; the zero loci in question are proper by the existence of Bernstein--Sato ideals.

Apply Lemma~\ref{lem:unit-shift-bound} with $L=L_E$ and $d=k_E+c-L_E(\boldsymbol\beta^j)$. It gives
\[
k_E+c-L_E(\boldsymbol\beta^j)
\leq N_{E,i_j}+(n-1-\delta_f)L_E(\one).
\]
Since $\boldsymbol\beta^j+\mathbf e_{i_j}\leq\mathbf a$ and the coefficients of $L_E$ are nonnegative,
\[
L_E(\boldsymbol\beta^j)+N_{E,i_j}\leq L_E(\mathbf a).
\]
Combining the last two inequalities yields
\[
c\leq L_E(\mathbf a)+(n-1-\delta_f)L_E(\one)-k_E.
\]
This proof applies to the stated representation of $H$; no choice of a new resolution component is needed after translation.
\end{proof}

\subsection{Examples and sharpness}\label{sec:sharpness}

\begin{example}[Monomials on a curve]\label{ex:curve}
Let $X=\mathbb A^1$ and $F=(x^{N_1},\ldots,x^{N_r})$, where $N_i\geq0$ and $\sum_iN_i>0$. Put $L(\svect)=\sum_iN_is_i$. A one-coordinate calculation gives
\[
B_F^{\mathbf a}
=\left(\prod_{h=1}^{L(\mathbf a)}(L(\svect)+h)\right),
\]
with the empty product interpreted as $1$. Indeed, differentiating $x^{L(\svect)+L(\mathbf a)}$ exactly $L(\mathbf a)$ times gives the displayed product, and the successive one-coordinate quotients show that every displayed linear factor must divide any annihilator. If $L(\mathbf a)>0$, the component $L(\svect)+L(\mathbf a)=0$ attains the upper bound: the identity is a log resolution, $k_E=0$, and $\delta_f=0$.
\end{example}

\begin{example}[A normal-crossings tuple]\label{ex:normal-crossings-bound}
Let $X=\mathbb A^n$, $n\geq2$, and $F=(x_1,\ldots,x_n)$. The coordinatewise calculation gives
\[
B_F^{\mathbf a}
=\left(\prod_{i=1}^n\prod_{h=1}^{a_i}(s_i+h)\right).
\]
Since $b_f(s)=(s+1)^n$, one has $\alpha_f=\delta_f=1$. For the divisor $E_i=(x_i=0)$, the theorem gives $c\leq a_i+n-2$, whereas the actual largest value is $a_i$ when $a_i>0$. Thus the bound is attained for $n=2$, including tuples with more than one function, but need not be attained when $n>2$.
\end{example}

\begin{remark}[The one-variable case]
When $r=1$, a path of length $a$ gives
\[
Z(B_f^a)=\bigcup_{j=0}^{a-1}\bigl(Z(b_f)-j\bigr).
\]
Hence the bound is attained whenever the least root of $b_f$ is $-n+\delta_f$. In particular, Saito's estimate is attained for quasihomogeneous isolated hypersurface singularities \cite[Theorem 0.4 and the following discussion]{Saito1994Microlocal}; for $n\geq2$, this gives the asserted sharpness in that class. The curve case is covered by Example~\ref{ex:curve}.
\end{remark}
\section{Specialization complexes and monodromy zeta functions}\label{monodromy}

The upper-bound argument is complete. We now return to the localized extensions of Section~\ref{sec:localized-extensions} and their relation to monodromy. Our aim is to express the local index comparison as an identity between divisor-valued constructible functions and Lagrangian cycles. This identity detects monodromy support; compatibility with proper pushforward gives the multivariable A'Campo formula.

\subsection{The specialization complex and its normalization}\label{sec:sabbah}
Set
\begin{equation}\label{eq:mono:normalization}
A=\Cring[t_1^{\pm1},\ldots,t_r^{\pm1}],\qquad
\mathbb T=\Spec A,
\end{equation}
and use the universal covering map
\[
\Exp:\Cring^r\longrightarrow\mathbb T,\qquad
(s_1,\ldots,s_r)\longmapsto
(e^{-2\pi\sqrt{-1}s_1},\ldots,e^{-2\pi\sqrt{-1}s_r}).
\]
The sheaf $\mathcal L_A=\Exp_!\Cring_{\Cring^r}$ is the universal rank-one local system of free $A$-modules on $\mathbb T$.

Let $i:D\hookrightarrow X$ and $j:U\hookrightarrow X$. For a constructible complex $\mathcal F^\bullet$ of $\Cring$-sheaves on $X$, Sabbah's specialization complex is
\[
\psi_F(\mathcal F^\bullet)
=i_*i^{-1}Rj_*\bigl(j^{-1}\mathcal F^\bullet
                 \otimes_{\Cring}(F|_U)^{-1}\mathcal L_A\bigr);
\]
see \cite{S90}. It is a constructible complex of $A$-modules supported on $D$. We write
\[
S(F)=\bigcup_{x\in D}\bigcup_j
\Supp_A\mathcal H^j\bigl(\psi_F(\Cring_X)\bigr)_x
\subseteq\mathbb T.
\]

\begin{theorem}[{\cite[Theorem 1.5.1]{BudurVanDerVeerWuZhou2021}}]\label{thm:support-Bernstein-Sato}
One has $\Exp(Z(B_F^{\one}))=S(F)$. Moreover, $S(F)$ is a finite union of torsion-translated subtori of codimension one in $\mathbb T$.
\end{theorem}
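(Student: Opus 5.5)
The plan is to prove the two assertions in the order (second after first). I first reduce both to statements about the localized modules $\Psi_{F,\mathfrak p}(\mathcal O_X)$ of \eqref{eq:localized-Psi}. Then I identify their parameter support, after applying $\Exp$, with the support of Sabbah's specialization complex. The key observation is that $\Exp$ is invariant under $\mathbb Z^r$-translations. So $\Exp(Z(B_F^{\one}))$ equals $\Exp$ of the parameter support of $M_F^{-k\one,k\one}$ for any $k\geq1$. Indeed, the filtration by unit diagonal shifts
\[
\Dmod_X[\svect]F^{\svect-k\one}\supseteq\Dmod_X[\svect]F^{\svect-(k-1)\one}\supseteq\cdots\supseteq\Dmod_X[\svect]F^{\svect+k\one}
\]
has successive quotients $M_F^{j\one,(j+1)\one}$, with supports $Z(B_F^{\one})-j\one$ by \eqref{eq:parameter-translation} and Theorem~\ref{thm:relative-properties-Fs}(2). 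Hence, as in Lemma~\ref{lem:lattice-path}, $\Supp_R M_F^{-k\one,k\one}$ is a finite union of integral translates of $Z(B_F^{\one})$.

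Next I compare stalks. Fix $\alpha\in\Cring^r$ with maximal ideal $\mathfrak m_\alpha$, and take $k\gg0$ so that Theorems~\ref{thm:maximal-extension} and~\ref{thm:minimal-extension} hold at $\mathfrak m_\alpha$. Then $(M_F^{-k\one,k\one})_{\mathfrak m_\alpha}=\Psi_{F,\mathfrak m_\alpha}(\mathcal O_X)$. I apply the relative de Rham functor, after passing to the analytic germ of $\Cring^r$ at $\alpha$, to the exact triangle $j_!\to j_*\to\Psi$. I expect to identify the de Rham complexes of $j_*$ and $j_!$ with $Rj_*$ and $j_!$ of the local system $(F|_U)^{-1}\mathcal L_A$, base-changed along the local isomorphism $\Exp$ near $\alpha$. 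For $j_*$ this uses that $F^{\svect}$ generates the local system $\Exp^{-1}$-pulled back. The $j_!$ case should follow by duality, using Theorem~\ref{thm:relative-duality-direct-image}-type compatibility of relative duality with $\mathrm{DR}$.

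The cone is then $\psi_F(\Cring_X)\otimes_A\mathcal O^{\mathrm{an}}_{\Cring^r,\alpha}$, up to the shift by $i_*i^{-1}$; this is legitimate since $\Psi$ is supported on $D$. Because $\Exp$ is étale, $\mathcal O^{\mathrm{an}}_{\Cring^r,\alpha}$ is faithfully flat over the local ring of $A$ at $\Exp(\alpha)$. So $\Exp(\alpha)\in S(F)$ if and only if $\mathrm{DR}(\Psi)$ has nonzero stalk over $\alpha$. On the other side, $\mathrm{DR}$ is faithful on relative holonomic modules. Combined with Theorem~\ref{thm:fiber-nonvanishing}, this shows that $\mathrm{DR}(\Psi)$ is nonzero near $\alpha$ if and only if $\alpha\in\Supp_R\Psi$. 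Putting this together with the first paragraph gives $\Exp(Z(B_F^{\one}))=S(F)$.

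For the second assertion, I use Theorem~\ref{Theorem:1.1} and Proposition~\ref{prop:positive-translation} with $\mathbf a=\one$. Every component of $Z(B_F^{\one})$ lies in an integral translate of a hyperplane $L_E(\svect)+k_E+c=0$ with $N_E\in\mathbb Z_{\geq0}^r\setminus\{0\}$. Hence $S(F)$ is the finite union of the sets $\{\tvect^{N_E}=e^{2\pi\sqrt{-1}(k_E+c)}\}$. Each is a torsion translate of a codimension-one subtorus, or a finite union of such when $N_E$ is not primitive.

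The main obstacle is the comparison step: making precise that relative $\mathrm{DR}$ of $j_!$ and $j_*$ of the localized modules recovers Sabbah's complex over the analytic completion at $\alpha$. I would first try to reduce it, via a log resolution and proper direct image (as in Lemma~\ref{lem:global-section-trick}), to the normal-crossings monomial case. There both sides can be computed explicitly, coordinate by coordinate, as Koszul-type complexes.
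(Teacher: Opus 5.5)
The paper gives no proof of this statement; it quotes it from \cite{BudurVanDerVeerWuZhou2021}. Your proposal therefore has to stand on its own. The route is viable, but the step that carries all of the content is only announced.

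The surrounding bookkeeping is correct. The diagonal filtration gives $\Supp_R M_F^{-k\one,k\one}=\bigcup_{j=-k}^{k-1}(Z(B_F^{\one})-j\one)$, whose image under $\Exp$ is $\Exp(Z(B_F^{\one}))$. For $k\gg0$ its localization at $\mathfrak m_\alpha$ is $\Psi_{F,\mathfrak m_\alpha}(\mathcal O_X)$. The ring $\mathcal O^{\mathrm{an}}_{\Cring^r,\alpha}$ is faithfully flat over $A_{\mathfrak m_{\Exp(\alpha)}}$. Nonvanishing of $\mathrm{DR}$ can be detected through $\mathrm{DR}(\Psi)\otimes^L\Cring_\alpha\simeq\mathrm{DR}(\Psi\otimes^L\Cring_\alpha)$, combined with Theorem~\ref{thm:fiber-nonvanishing}, holonomicity of the derived fibres, and conservativity of $\mathrm{DR}$ on holonomic complexes. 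Conservativity, not faithfulness, is the property you need.

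What is missing is the isomorphism
\[
\mathrm{DR}\bigl(\Psi_{F,\mathfrak m_\alpha}(\mathcal O_X)\otimes_{R_{\mathfrak m_\alpha}}\mathcal O^{\mathrm{an}}_{\Cring^r,\alpha}\bigr)\simeq\psi_F(\Cring_X)\otimes_A\mathcal O^{\mathrm{an}}_{\Cring^r,\alpha}\quad(\text{up to shift}).
\]
This is a genuine relative Riemann--Hilbert statement, and nothing in Section~\ref{RELD} supplies it. With $\mathcal L=(F|_U)^{-1}\mathcal L_A\otimes_A\mathcal O^{\mathrm{an}}_{\Cring^r,\alpha}$, it requires three things:
(i) a relative Deligne comparison $\mathrm{DR}\circ j_*\simeq Rj_*\circ\mathrm{DR}$ for the family $F^{\svect}$ over the germ at $\alpha$;
(ii) compatibility of relative $\mathrm{DR}$ with $\mathbb D_{X,R}$ and relative Verdier duality, to pass from $j_*$ of $F^{-\svect}$ to $j_!$ of $F^{\svect}$ (Theorem~\ref{thm:relative-duality-direct-image} concerns $\mu_+$ and $\mathbb D$, not $\mathrm{DR}$);
(iii) identification of $\mathrm{DR}$ of $j_!\to j_*$ with $j_!\mathcal L\to Rj_*\mathcal L$, so that the cone is $i_*i^{-1}Rj_*\mathcal L$.

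Your log-resolution idea is the right way to get (i). Since $\mu\circ j'=j$, one has $\mu_+j'_*=j_*$ and $R\mu_*Rj'_*=Rj_*$, and relative $\mathrm{DR}$ commutes with proper direct image. In the normal-crossings case, $\partial_y\bigl(y^{m}y^{L_E(\svect)}\bigr)=(L_E(\svect)+m)\,y^{m-1}y^{L_E(\svect)}$. Hence the stalk cohomology is $\mathcal O^{\mathrm{an}}_{\Cring^r,\alpha}/(L_E(\svect)+m_0)$ when $L_E(\alpha)=-m_0\in\mathbb Z$, and zero otherwise. This agrees with $\mathcal O^{\mathrm{an}}_{\Cring^r,\alpha}/(\tvect^{N_E}-1)$, because the two generators differ by a unit near $\alpha$. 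Until (i)--(iii) are actually carried out, however, the first assertion rests on an expectation rather than a proof.

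The second part also contains a computational error. Write $N_E=dL$ with $L$ primitive. On $H=Z(L_E(\svect)+k_E+c)$ one has $L\cdot\svect=-(k_E+c)/d$. Since $L\cdot\mathbb Z^r=\mathbb Z$, this gives
\[
\Exp(H)=\bigl\{\tvect^{L}=e^{2\pi\sqrt{-1}(k_E+c)/d}\bigr\}.
\]
This is a single connected torsion-translated subtorus. It is not $\{\tvect^{N_E}=e^{2\pi\sqrt{-1}(k_E+c)}\}=\{\tvect^{N_E}=1\}$, which is the union of all $d$ such translates.

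Your claimed equality therefore overstates $S(F)$. Take the cusp $f=x^2+y^3$ with $r=1$. The exceptional divisor with $N_E=6$, $k_E=4$ carries the root $-5/6$ (with $c=1$), so your formula would put all sixth roots of unity into $S(F)$. In fact the monodromy eigenvalues give $S(F)=\{1,e^{\pm2\pi\sqrt{-1}/6}\}$.

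The qualitative conclusion does survive. Write $S(F)=\bigcup_H\Exp(H)$ over the codimension-one components $H$ of $Z(B_F^{\one})$, and use Proposition~\ref{prop:positive-translation} to absorb the higher-codimension components.
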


\subsection{Divisor-valued constructible functions}
Let $\mathcal F(X)$ be the group of integer-valued algebraically constructible functions on $X$, and let $\mathcal L(X)$ be the free abelian group on irreducible algebraic conic Lagrangian subvarieties of $T^*X$. Write $\Div(A)$ for the free abelian group on prime divisors of $\mathbb T$. Put
\begin{equation}\label{eq:mono:groups}
\mathcal F_A(X)=\mathcal F(X)\otimes_{\mathbb Z}\Div(A),\qquad
\mathcal L_A(X)=\mathcal L(X)\otimes_{\mathbb Z}\Div(A).
\end{equation}
An element $\sum_{\mathfrak q}\varphi_{\mathfrak q}\otimes[V(\mathfrak q)]$ of $\mathcal F_A(X)$ is viewed as a divisor-valued constructible function, with value $\sum_{\mathfrak q}\varphi_{\mathfrak q}(x)[V(\mathfrak q)]$ at $x$.

For an irreducible closed subvariety $Z\subseteq X$, let $T_Z^*X=\overline{T^*_{Z_{\mathrm{sm}}}X}$ and let $\operatorname{Eu}_Z$ be its local Euler obstruction function, extended by zero. We normalize the characteristic-cycle isomorphism by
\begin{equation}\label{eq:mono:cc-map}
\CC:\mathcal F(X)\xrightarrow{\sim}\mathcal L(X),\qquad
\CC(\operatorname{Eu}_Z)=(-1)^{\dim Z}[T_Z^*X];
\end{equation}
see \cite[Chapter IX]{KS13}. Tensoring with $\Div(A)$ gives
\[
\CC_A=\CC\otimes\operatorname{id}:
\mathcal F_A(X)\xrightarrow{\sim}\mathcal L_A(X).
\]

Let $\mathfrak P_F$ be the set of generic points of the irreducible components of $S(F)$. For a height-one prime $\mathfrak q\subset A$, define
\begin{equation}\label{eq:mono:length}
\varphi_{\mathfrak q}(x)
=\sum_j(-1)^j\ell_{A_{\mathfrak q}}
 \bigl(\mathcal H^j(\psi_F(\Cring_X))_x\otimes_A A_{\mathfrak q}\bigr).
\end{equation}
The cohomology stalks are finitely generated torsion $A$-modules. Since $A_{\mathfrak q}$ is a discrete valuation ring, the displayed lengths are finite. They are constant on a finite adapted algebraic stratification, so $\varphi_{\mathfrak q}\in\mathcal F(X)$; moreover, $\varphi_{\mathfrak q}=0$ for $\mathfrak q\notin\mathfrak P_F$.

The \emph{divisor-valued stalk Euler characteristic} is
\begin{equation}\label{eq:mono:chi}
\chi_{\mathrm{st}}^A(\psi_F(\Cring_X))
=\sum_{\mathfrak q\in\mathfrak P_F}
 \varphi_{\mathfrak q}\otimes[V(\mathfrak q)]
\in\mathcal F_A(X).
\end{equation}
The same construction applies to any constructible $A$-complex with finitely generated torsion cohomology stalks. All divisor sums appearing below have finite support.

\subsection{Coefficient cycles and the local index comparison}
For a height-one prime $\mathfrak p\subset R$, consider $\Psi_{F,\mathfrak p}(\mathcal O_X)$ from~\eqref{eq:localized-Psi}. It is relative holonomic and torsion over $R_{\mathfrak p}$, so its parameter support is contained in the closed point of $\Spec R_{\mathfrak p}$. For an adapted Whitney stratification $X=\bigsqcup_\beta X_\beta$, write
\begin{equation}\label{eq:mono:cycle}
\CCrel\bigl(\Psi_{F,\mathfrak p}(\mathcal O_X)\bigr)
=\sum_\beta m_\beta(\mathfrak p)
 [\overline{T^*_{X_\beta}X}\times\{\mathfrak p\}].
\end{equation}
Define its \emph{coefficient cycle} by
\[
\Lambda_{\mathfrak p}
=\sum_\beta m_\beta(\mathfrak p)[\overline{T^*_{X_\beta}X}]
\in\mathcal L(X).
\]
This cycle is independent of the stratification and filtration. It is effective, and is nonzero whenever $\Psi_{F,\mathfrak p}(\mathcal O_X)\neq0$.

\begin{proposition}\label{prop:mono:translation}
For $\mathbf b\in\mathbb Z^r$, one has $\Lambda_{\tau_{\mathbf b}(\mathfrak p)}=\Lambda_{\mathfrak p}$, where $\tau_{\mathbf b}(h)(\svect)=h(\svect+\mathbf b)$.
\end{proposition}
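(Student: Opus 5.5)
The plan is to realize the parameter translation $\tau_{\mathbf b}$ by an explicit $\Dmod_X$-linear isomorphism of modules. This isomorphism is semilinear over $R$ and identifies $\Psi_{F,\mathfrak p}(\mathcal O_X)$ with $\Psi_{F,\tau_{\mathbf b}(\mathfrak p)}(\mathcal O_X)$ up to a twist of the $R$-structure. Relative characteristic cycles are then transported along it.

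Consider the map $\sigma_{\mathbf b}$ on $j_*\mathcal O_U[\svect]F^{\svect}$ given by
\[
\sigma_{\mathbf b}\bigl(g(x,\svect)F^{\svect}\bigr)=g(x,\svect+\mathbf b)F^{\mathbf b}F^{\svect}.
\]
As for $\tau$ in the proof of Proposition~\ref{prop:positive-translation}, this map is bijective, commutes with the $\Dmod_X$-action by~\eqref{eq:Fs-action}, and satisfies $\sigma_{\mathbf b}(hm)=\tau_{\mathbf b}(h)\sigma_{\mathbf b}(m)$ for $h\in R$. It carries $\Dmod_X[\svect]F^{\svect+\mathbf c}$ onto $\Dmod_X[\svect]F^{\svect+\mathbf c+\mathbf b}$, and hence $M_F^{-k\one,k\one}$ onto $M_F^{-k\one+\mathbf b,k\one+\mathbf b}$.

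Fix the sign convention so that $\sigma_{\mathbf b}$ maps $R_{\mathfrak p}$-localizations to $R_{\mathfrak p'}$-localizations, where $\mathfrak p'$ is $\tau_{\mathbf b}(\mathfrak p)$ or $\tau_{-\mathbf b}(\mathfrak p)$; either choice suffices, since the claim is symmetric in $\pm\mathbf b$. We then get a semilinear isomorphism
\[
(M_F^{-k\one,k\one})_{\mathfrak p}\simeq(M_F^{-k\one+\mathbf b,k\one+\mathbf b})_{\mathfrak p'}.
\]

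Next, identify the target with $\Psi_{F,\mathfrak p'}(\mathcal O_X)$. For $k\gg0$ we have $-k\one+\mathbf b\le-k'\one$ and $k\one+\mathbf b\ge k'\one$, where $k'$ is large enough that Theorems~\ref{thm:maximal-extension} and \ref{thm:minimal-extension} apply at $\mathfrak p'$. Those theorems give stabilization: $\Dmod_X[\svect]_{\mathfrak p'}F^{\svect-m\one}$ and $\Dmod_X[\svect]_{\mathfrak p'}F^{\svect+m\one}$ are constant for $m\ge k'$. The sandwiched lattices $\Dmod_X[\svect]_{\mathfrak p'}F^{\svect+\mathbf c}$ with $\mathbf c\le-k'\one$ (respectively $\mathbf c\ge k'\one$) therefore coincide with the maximal (respectively minimal) extension. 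This uses the same sandwich argument as at the end of Lemma~\ref{lem:global-section-trick}, together with multiplication by powers of $f$. Hence $\sigma_{\mathbf b}$ induces a $\Dmod_X$-linear, $\tau_{\mathbf b}$-semilinear isomorphism
\[
\Psi_{F,\mathfrak p}(\mathcal O_X)\simeq\Psi_{F,\mathfrak p'}(\mathcal O_X).
\]

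Finally, compare relative characteristic cycles. Take a good filtration $G_\bullet$ on the source. Its image under $\sigma_{\mathbf b}$ is a good filtration on the target, since $\sigma_{\mathbf b}$ respects $F^{\mathrm{rel}}_k\Amod_R$ up to the ring automorphism $\tau_{\mathbf b}$. The associated graded modules are then identified, as coherent sheaves on $T^*X\times\Spec R$, via pullback along $\mathrm{id}\times\tau_{\mathbf b}$. This automorphism of $T^*X\times\Spec R$ sends $\overline{T^*_{X_\beta}X}\times\{\mathfrak p\}$ to $\overline{T^*_{X_\beta}X}\times\{\mathfrak p'\}$ and preserves lengths at generic points. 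Comparing with~\eqref{eq:mono:cycle}, the multiplicities $m_\beta(\mathfrak p)$ and $m_\beta(\mathfrak p')$ agree, so $\Lambda_{\mathfrak p'}=\Lambda_{\mathfrak p}$.

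The main obstacle is the middle step: checking that the shifted quotient $(M_F^{-k\one+\mathbf b,k\one+\mathbf b})_{\mathfrak p'}$ really is $\Psi_{F,\mathfrak p'}(\mathcal O_X)$, i.e. that the stabilization from Theorems~\ref{thm:maximal-extension}--\ref{thm:minimal-extension} holds for non-diagonal shifts. The sandwich between diagonal lattices handles this. The remaining sign bookkeeping for $\tau_{\mathbf b}$ versus $\tau_{-\mathbf b}$ is routine.
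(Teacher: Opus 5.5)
Your proposal is correct and follows essentially the same route as the paper. You realize the parameter translation by the $\Dmod_X$-linear, $\tau_{\mathbf b}$-semilinear shift $gF^{\svect}\mapsto g(x,\svect+\mathbf b)F^{\mathbf b}F^{\svect}$. You then use stabilization of the maximal and minimal extensions to identify the non-diagonally shifted localized quotient with $\Psi$. Finally, you transport a good filtration so that the multiplicities match.

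Your explicit sandwich argument for the non-diagonal shifts spells out what the paper compresses into the phrase ``stabilization of the maximal and minimal extensions.'' There is also no sign ambiguity: the localization lands at $\tau_{\mathbf b}(\mathfrak p)$, because $\sigma_{\mathbf b}(u^{-1}m)=\tau_{\mathbf b}(u)^{-1}\sigma_{\mathbf b}(m)$.
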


\begin{proof}
For sufficiently large $k$, stabilization of the maximal and minimal extensions gives
\[
\Psi_{F,\mathfrak p}(\mathcal O_X)
=\bigl(M_F^{-k\one-\mathbf b,k\one-\mathbf b}\bigr)_{\mathfrak p},\qquad
\Psi_{F,\tau_{\mathbf b}(\mathfrak p)}(\mathcal O_X)
=\bigl(M_F^{-k\one,k\one}\bigr)_{\tau_{\mathbf b}(\mathfrak p)}.
\]
Translation of the parameters identifies these localized modules semilinearly and leaves the $T^*X$ factor unchanged. By~\eqref{eq:relative-localization}, the multiplicities in their characteristic cycles agree.
\end{proof}

For a primitive $L\in\mathbb Z_{\geq0}^r\setminus\{0\}$ and $\alpha\in\mathbb Q$, put
\begin{equation}\label{eq:mono:paired}
\mathfrak p(L,\alpha)=(L\cdot\svect+\alpha),\qquad
\mathfrak q(L,\alpha)=(\tvect^L-e^{2\pi\sqrt{-1}\alpha}).
\end{equation}
These are height-one primes. Call $\mathfrak p$ \emph{active} if $\Psi_{F,\mathfrak p}(\mathcal O_X)\neq0$. Theorem~\ref{thm:support-Bernstein-Sato} gives an active representative $\mathfrak p(\mathfrak q)$ of the form~\eqref{eq:mono:paired} for every $\mathfrak q\in\mathfrak P_F$.

Any two such representatives for the same $\mathfrak q$ differ by an integral parameter translation: their constants differ by an integer, and primitivity of $L$ gives $L\cdot\mathbb Z^r=\mathbb Z$. Hence Proposition~\ref{prop:mono:translation} makes
\begin{equation}\label{eq:mono:theta}
\Theta_F=\sum_{\mathfrak q\in\mathfrak P_F}
 \Lambda_{\mathfrak p(\mathfrak q)}\otimes[V(\mathfrak q)]
\in\mathcal L_A(X)
\end{equation}
independent of the representatives.

\begin{lemma}\label{lem:mono:index}
With notations as above,
\[
\CC(\varphi_{\mathfrak q})=(-1)^n\Lambda_{\mathfrak p(\mathfrak q)}
\qquad(\mathfrak q\in\mathfrak P_F).
\]
\end{lemma}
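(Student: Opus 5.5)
The plan is to deduce the identity from a relative Riemann--Hilbert comparison between $\Psi_{F,\mathfrak p}(\mathcal O_X)$ and the specialization complex, combined with Kashiwara's local index theorem applied generically along the divisor. Fix $\mathfrak q\in\mathfrak P_F$ and an active representative $\mathfrak p=\mathfrak p(\mathfrak q)=(L\cdot\svect+\alpha)$. By Proposition~\ref{prop:mono:translation}, $\Lambda_{\mathfrak p}$ does not depend on this choice. Since $\Exp$ is a local analytic isomorphism, it identifies a neighbourhood of a general point of $V(\mathfrak p)$ with a neighbourhood of a general point of $V(\mathfrak q)$. Hence the analytic local rings at the generic points agree: $R_{\mathfrak p}^{\mathrm{an}}\simeq A_{\mathfrak q}^{\mathrm{an}}$. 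Both are discrete valuation rings, faithfully flat over $R_{\mathfrak p}$ and $A_{\mathfrak q}$ respectively, and lengths are preserved under passage to them.

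First step. I would invoke the relative de Rham comparison of \cite[\S1.3]{Wu26}, the relative analogue of the Beilinson--Bernstein description of nearby cycles. It should give a quasi-isomorphism
\[
\mathrm{DR}_{X/R}\bigl(\Psi_{F,\mathfrak p}(\mathcal O_X)\bigr)\otimes_{R_{\mathfrak p}}R^{\mathrm{an}}_{\mathfrak p}
\simeq\psi_F(\Cring_X)\otimes_A A^{\mathrm{an}}_{\mathfrak q}[n]
\]
(up to the normalization shift $[n]$). The inputs are the formulas $j_*=\Dmod_X[\svect]_{\mathfrak p}F^{\svect-k\one}$ and $j_!=\Dmod_X[\svect]_{\mathfrak p}F^{\svect+k\one}$ from Theorems~\ref{thm:maximal-extension} and~\ref{thm:minimal-extension}, together with the fact that $\mathrm{DR}$ of $j_*(\mathcal O_U[\svect]F^{\svect})$ is $Rj_*$ of the universal local system pulled back by $F$. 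Taking stalks and localized Euler characteristics over the discrete valuation ring, this identifies $\varphi_{\mathfrak q}(x)$, up to the sign $(-1)^n$, with the $R_{\mathfrak p}$-length Euler characteristic of the stalk at $x$ of $\mathrm{DR}_{X/R}(\Psi_{F,\mathfrak p}(\mathcal O_X))$.

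Second step. This step computes the length Euler characteristic of $\mathrm{DR}_{X/R}(\Psi_{F,\mathfrak p}(\mathcal O_X))$ via characteristic cycles. Let $\pi$ be a uniformizer of $R_{\mathfrak p}$. Since $\Psi_{F,\mathfrak p}(\mathcal O_X)$ is coherent and supported at the closed point, it carries a finite filtration by $\pi^i\Psi$. The graded pieces are holonomic $\Dmod_X\otimes\kappa(\mathfrak p)$-modules. Both sides of the desired identity are additive along this filtration: the stalk Euler characteristic of $\mathrm{DR}$ by exactness, and $\CCrel$ by the additivity remark in Section~\ref{RELD}, since all pieces are pure of dimension $n$ over the closed point. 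The sum of the characteristic cycles of the graded pieces is exactly $\Lambda_{\mathfrak p}$.

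For each graded piece, a holonomic module over the field $\kappa(\mathfrak p)$, I would base change to an algebraic closure. Kashiwara's local index theorem then gives $\chi(\mathrm{DR}(\mathcal M))=\CC^{-1}(\CC(\mathcal M))$, in the normalization~\eqref{eq:mono:cc-map}. Summing over the pieces gives $\CC$ of the length Euler characteristic equal to $\Lambda_{\mathfrak p}$. Combined with the first step, this yields $\CC(\varphi_{\mathfrak q})=(-1)^n\Lambda_{\mathfrak p}$, where the sign comes from the shift $[n]$ and the convention $\CC(\operatorname{Eu}_Z)=(-1)^{\dim Z}[T^*_ZX]$. The sign I will fix by checking a smooth point of $D$ in the normal-crossings model, where $\Psi$ and $\psi_F$ are both computable.

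The main obstacle is the first step: making the comparison between algebraic localization at $\mathfrak p$ and analytic localization at $\mathfrak q$ rigorous. Two points need care. The perverse-type statement must hold stalkwise and not merely generically. Strictly speaking, the index theorem should be applied over $\kappa(\mathfrak p)$, a non-algebraically-closed field of transcendence degree $r-1$. To handle the latter I would specialize to a very general closed point $\beta\in V(\mathfrak p)$. There Theorem~\ref{thm:fiber-nonvanishing} and generic flatness guarantee that the fiber multiplicities equal the generic ones, which reduces the argument to the classical index theorem over $\Cring$.
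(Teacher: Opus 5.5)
Your route is genuinely different from the paper's, and it is sound in outline, including the sign. The paper proves nothing about de Rham complexes at this point. It imports from \cite[\S1.3]{Wu26} the relative local index formula $m_\beta(\mathfrak p(\mathfrak q))=(-1)^{d_\beta+n}\bigl(\varphi_{\mathfrak q}(x_\beta)-\sum_\gamma c_{\beta\gamma}\varphi_{\mathfrak q}(x_\gamma)\bigr)$, with $c_{\beta\gamma}$ the complex-link Euler characteristics. It then notes that $(-1)^{d_\beta}$ times the bracket is the complex-link expression for the multiplicity of $[\overline{T^*_{X_\beta}X}]$ in $\CC(\varphi_{\mathfrak q})$ under the normalization $\CC(\operatorname{Eu}_Z)=(-1)^{\dim Z}[T^*_ZX]$. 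So in the paper the lemma is a formal rewriting of that cited input.

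You instead derive the input from three ingredients. The first is a relative de Rham comparison $\mathrm{DR}(\Psi_{F,\mathfrak p}(\mathcal O_X))\simeq\psi_F(\Cring_X)[n]$, after matching coefficients through $\Exp$; this is where $(-1)^n$ comes from. The case $F=x$ on $\mathbb A^1$, where $\varphi_{\mathfrak q}=-1_{\{0\}}$ and $\Lambda_{\mathfrak p}=[T^*_0X]$, confirms the sign. The second is d\'evissage along $\pi^i\Psi$, and the third is the absolute index theorem applied to the graded pieces. This explains why the cited formula holds using only the classical index theorem, but it costs relative Riemann--Hilbert machinery, and two points must be stated correctly.

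First, $R^{\mathrm{an}}_{\mathfrak p}$ only makes sense at a chosen point, as $(\mathcal O_{\Cring^r,\beta})_{\mathfrak p}$ for a general $\beta\in V(\mathfrak p)$. Likewise $\mathrm{DR}$ must use coefficients holomorphic jointly in $x$ and $\svect$ near $X\times\{\beta\}$. It cannot use $\mathcal O_{X^{\mathrm{an}}}\otimes_{\Cring}R_{\mathfrak p}$ followed by $\otimes R^{\mathrm{an}}_{\mathfrak p}$. For example, take $F=(x,y)$ on $\mathbb A^2$ and $\mathfrak p=(s_1+1)$. Then $\Psi_{F,\mathfrak p}(\mathcal O_X)$ is $\delta_{x=0}$ in $x$ tensored with $\mathcal O[y^{-1}]\otimes_{\Cring}\kappa(\mathfrak p)\,y^{s_2}$ in $y$. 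Its horizontal sections $y^{-s_2}$ do not lie in $\mathcal O^{\mathrm{an}}\otimes_{\Cring}\Cring(s_2)$. So the naive complex does not compute $\psi_F(\Cring_X)[2]$ along $\{x=0,\,y\neq0\}$, where $\varphi_{\mathfrak q}=-1$. For the same reason, a graded piece over $\kappa(\mathfrak p)$ has no de Rham complex until it is spread out over $V(\mathfrak p)$ near $\beta$.

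Second, passing from generic ranks to the fibre at $\beta$ is a base-change statement for relative de Rham complexes: perfect stalks, and compatibility with derived restriction to $\beta$. For the multiplicities you also need generic flatness of $\gr^{\mathrm{rel}}$. Theorem~\ref{thm:fiber-nonvanishing} gives only nonvanishing and is not what is needed there.
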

\begin{proof}

For the fixed Whitney stratification with connected strata,
choose a complex normal slice $N_\beta$ to $X_\beta$ at
$x_\beta$, a sufficiently small closed ball
$B_\varepsilon(x_\beta)$, and a generic holomorphic function
$g_\beta:(N_\beta,x_\beta)\to(\mathbb C,0)$.
For $\gamma\neq\beta$ with
$X_\beta\subseteq\overline{X_\gamma}$, define
\[
c_{\beta\gamma}
:=\chi_c\!\left(
B_\varepsilon(x_\beta)\cap N_\beta\cap X_\gamma
\cap g_\beta^{-1}(\delta)
\right),
\qquad 0<|\delta|\ll\varepsilon\ll1.
\]
This is the compactly supported Euler characteristic of the
complex link of $X_\beta$ in $X_\gamma$, and is independent
of the sufficiently small generic choices.
We set $c_{\beta\beta}=0$ and $c_{\beta\gamma}=0$ whenever
$X_\beta\not\subseteq\overline{X_\gamma}$. Let $d_\beta=dim X_\beta$.By \cite[\S1.3]{Wu26}, we have \begin{equation}\label{eq:mono:index-input}
m_\beta(\mathfrak p(\mathfrak q))
 =(-1)^{d_\beta+n}
 \left(\varphi_{\mathfrak q}(x_\beta)
       -\sum_{X_\beta\subseteq \bar{X_\gamma} }
         c_{\beta\gamma}\varphi_{\mathfrak q}(x_\gamma)\right).
\end{equation}
where $x_\beta\in X_\beta$ and $x_\gamma \in X_\gamma$. By definition of local Euler obstruction function, $\CC(\varphi_{\mathfrak q})=(-1)^n\Lambda_{\mathfrak p(\mathfrak q)}$
\end{proof}
\begin{theorem}\label{thm:mono:comparison}
The two divisor-valued invariants satisfy
\begin{equation}\label{eq:mono:comparison}
\CC_A^{-1}(\Theta_F)=(-1)^n\chi_{\mathrm{st}}^A(\psi_F(\Cring_X)).
\end{equation}
\end{theorem}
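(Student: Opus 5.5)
The plan is to reduce \eqref{eq:mono:comparison} to Lemma~\ref{lem:mono:index} applied one prime divisor at a time. Both sides lie in $\mathcal F_A(X)=\mathcal F(X)\otimes_{\mathbb Z}\Div(A)$. Since $\Div(A)$ is free on the prime divisors $[V(\mathfrak q)]$, it suffices to compare the coefficients of each $[V(\mathfrak q)]$.

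First I will make sure both sides are indexed by the same set of primes. By \eqref{eq:mono:chi}, the right-hand side is $(-1)^n\sum_{\mathfrak q\in\mathfrak P_F}\varphi_{\mathfrak q}\otimes[V(\mathfrak q)]$. For $\mathfrak q\notin\mathfrak P_F$ we have $\varphi_{\mathfrak q}=0$, because the stalk cohomology then vanishes after localizing at $\mathfrak q$. So no terms are lost by summing only over $\mathfrak P_F$. On the left, $\Theta_F$ is given by \eqref{eq:mono:theta} and is well defined by Proposition~\ref{prop:mono:translation}. Since $\CC_A=\CC\otimes\operatorname{id}$ is an isomorphism, its inverse acts coefficientwise:
\[
\CC_A^{-1}(\Theta_F)=\sum_{\mathfrak q\in\mathfrak P_F}\CC^{-1}\bigl(\Lambda_{\mathfrak p(\mathfrak q)}\bigr)\otimes[V(\mathfrak q)].
\]

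Next, for each $\mathfrak q\in\mathfrak P_F$, Lemma~\ref{lem:mono:index} gives $\CC(\varphi_{\mathfrak q})=(-1)^n\Lambda_{\mathfrak p(\mathfrak q)}$. Applying $\CC^{-1}$ yields $\CC^{-1}(\Lambda_{\mathfrak p(\mathfrak q)})=(-1)^n\varphi_{\mathfrak q}$, using $(-1)^{2n}=1$. Substituting this into the display above gives exactly the right-hand side of \eqref{eq:mono:comparison}. Both sums have finite support, since $\mathfrak P_F$ is finite by Theorem~\ref{thm:support-Bernstein-Sato}.

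The only delicate point is consistency of choices: the representative $\mathfrak p(\mathfrak q)$ used in $\Theta_F$ must be one for which Lemma~\ref{lem:mono:index} applies. The lemma is stated for any active representative of the form~\eqref{eq:mono:paired}. Any two such representatives are related by an integral translation $\tau_{\mathbf b}$, so by Proposition~\ref{prop:mono:translation} they give the same cycle. Hence the identity does not depend on this choice. All the substantive work, namely the multiplicity formula \eqref{eq:mono:index-input} from \cite{Wu26} and its identification with the Euler-obstruction normalization \eqref{eq:mono:cc-map}, is already contained in Lemma~\ref{lem:mono:index}. Given that lemma, the theorem reduces to bookkeeping in the free group $\Div(A)$.
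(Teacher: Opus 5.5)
Your proposal is correct and follows the paper's proof. Both arguments apply Lemma~\ref{lem:mono:index} coefficientwise over $\mathfrak P_F$ and use that $\CC_A=\CC\otimes\operatorname{id}$ is an isomorphism acting on each divisor coefficient. You apply $\CC_A^{-1}$ to $\Theta_F$, while the paper applies $\CC_A$ to $\chi^A_{\mathrm{st}}(\psi_F(\Cring_X))$ and then inverts; this is the same bookkeeping.
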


\begin{proof}
Apply Lemma~\ref{lem:mono:index} coefficientwise:
\[
\CC_A\bigl(\chi_{\mathrm{st}}^A(\psi_F(\Cring_X))\bigr)
=\sum_{\mathfrak q\in\mathfrak P_F}
 \CC(\varphi_{\mathfrak q})\otimes[V(\mathfrak q)]
=(-1)^n\Theta_F.
\]
Then apply $\CC_A^{-1}$.
\end{proof}

\subsection{Detection of monodromy support}
Since $A$ is a unique factorization domain, every divisor on $\mathbb T$ is principal. Define $\zeta_{F,x}^{\mathrm{mon}}\in\operatorname{Frac}(A)^\times/A^\times$ by
\begin{equation}\label{eq:mono:zeta}
\operatorname{div}\bigl(\zeta_{F,x}^{\mathrm{mon}}\bigr)
=\chi_{\mathrm{st}}^A(\psi_F(\Cring_X))(x)
=\sum_{\mathfrak q\in\mathfrak P_F}
 \varphi_{\mathfrak q}(x)[V(\mathfrak q)].
\end{equation}
Equivalently, a representative is
\[
\prod_{\mathfrak q=\mathfrak q(L,\alpha)\in\mathfrak P_F}
\bigl(\tvect^L-e^{2\pi\sqrt{-1}\alpha}\bigr)^{\varphi_{\mathfrak q}(x)}.
\]
Here $PZ(\zeta_{F,x}^{\mathrm{mon}})$ denotes the support of its divisor, that is, the union of its zero and pole divisors. This is independent of the representative.

\begin{corollary}\label{cor:mono:detection}
One has
\[
S(F)=\bigcup_{x\in X}PZ(\zeta_{F,x}^{\mathrm{mon}}).
\]
\end{corollary}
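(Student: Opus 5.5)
We prove the two inclusions separately. By definition, $PZ(\zeta_{F,x}^{\mathrm{mon}})$ is the union of those $V(\mathfrak q)$ with $\mathfrak q\in\mathfrak P_F$ and $\varphi_{\mathfrak q}(x)\neq0$. Every such $V(\mathfrak q)$ is an irreducible component of $S(F)$, so the inclusion $\bigcup_x PZ(\zeta_{F,x}^{\mathrm{mon}})\subseteq S(F)$ is immediate.

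For the reverse inclusion, Theorem~\ref{thm:support-Bernstein-Sato} shows that $S(F)$ is a finite union of codimension-one translated subtori. It therefore suffices to show that for each $\mathfrak q\in\mathfrak P_F$ the constructible function $\varphi_{\mathfrak q}$ is not identically zero. If so, some $x$ has $\varphi_{\mathfrak q}(x)\neq0$, and then $V(\mathfrak q)\subseteq PZ(\zeta_{F,x}^{\mathrm{mon}})$.

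To get this nonvanishing we use Theorem~\ref{thm:mono:comparison}, or Lemma~\ref{lem:mono:index} coefficientwise: $\CC(\varphi_{\mathfrak q})=(-1)^n\Lambda_{\mathfrak p(\mathfrak q)}$. Since $\CC$ is an isomorphism $\mathcal F(X)\to\mathcal L(X)$, $\varphi_{\mathfrak q}=0$ would force $\Lambda_{\mathfrak p(\mathfrak q)}=0$. However, $\mathfrak p(\mathfrak q)$ was chosen active, meaning $\Psi_{F,\mathfrak p(\mathfrak q)}(\mathcal O_X)\neq0$.

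The remaining point is that a nonzero active localized module has a nonzero coefficient cycle. The paper asserts this when defining $\Lambda_{\mathfrak p}$, and we justify it as follows. $\Psi_{F,\mathfrak p}(\mathcal O_X)$ is a nonzero coherent relative holonomic module, so its relative characteristic variety is nonempty by Lemma~\ref{lem:relative-support}. Its parameter support is the closed point $\mathfrak p$ of $\Spec R_{\mathfrak p}$, so each irreducible component has the form $\overline{T^*_{X_\beta}X}\times\{\mathfrak p\}$. The multiplicity there is a length at a generic point of the support, which is positive. Hence some $m_\beta(\mathfrak p)>0$, and $\Lambda_{\mathfrak p}$ is a nonzero effective cycle.

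The only delicate step is the existence of the active representative $\mathfrak p(\mathfrak q)$ for every $\mathfrak q\in\mathfrak P_F$. This needs a codimension-one component $H$ of $Z(B_F^{\one})$ with $\Exp(H)$ dense in $V(\mathfrak q)$ and $\Psi_{F,I(H)}\neq0$. We take this from the discussion preceding~\eqref{eq:mono:theta}, via Theorem~\ref{thm:support-Bernstein-Sato}.

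If we had to check it directly, we would proceed as follows. For the generic point $\mathfrak p=I(H)$, the module $(M_F^{\mathbf 0,\one})_{\mathfrak p}\neq0$, and it is a subquotient of $(M_F^{-k\one,k\one})_{\mathfrak p}=\Psi_{F,\mathfrak p}(\mathcal O_X)$ by~\eqref{eq:localized-Psi}. So $\Psi_{F,\mathfrak p}$ is nonzero, which means $\mathfrak p$ is active.
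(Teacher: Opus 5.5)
Your proposal is correct and is essentially the paper's proof. The inclusion $\bigcup_x PZ(\zeta_{F,x}^{\mathrm{mon}})\subseteq S(F)$ is read off from~\eqref{eq:mono:zeta}, and the reverse inclusion follows because $\mathfrak p(\mathfrak q)$ is active, so $\Lambda_{\mathfrak p(\mathfrak q)}\neq0$, which together with Lemma~\ref{lem:mono:index} and injectivity of $\CC$ forces $\varphi_{\mathfrak q}\not\equiv0$. Your extra justifications are sound and only make explicit what the paper asserts without proof: multiplicities at components of $\Chrel$ are positive, and $I(H)$ is active because $(M_F^{\mathbf 0,\one})_{\mathfrak p}$ is a subquotient of $\Psi_{F,\mathfrak p}(\mathcal O_X)$.
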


\begin{proof}
Every prime divisor contributing to~\eqref{eq:mono:zeta} is contained in $S(F)$, which gives one inclusion. Conversely, fix $\mathfrak q\in\mathfrak P_F$. Since $\mathfrak p(\mathfrak q)$ is active, $\Lambda_{\mathfrak p(\mathfrak q)}\neq0$. Lemma~\ref{lem:mono:index} and injectivity of $\CC$ imply that $\varphi_{\mathfrak q}$ is not identically zero. Thus $\varphi_{\mathfrak q}(x)\neq0$ for some $x$, and $V(\mathfrak q)$ occurs in the divisor of $\zeta_{F,x}^{\mathrm{mon}}$. Every irreducible component of $S(F)$ is therefore detected.
\end{proof}
\subsection{Proper pushforward}
Let $\mu:Y\to X$ be the log resolution fixed in Section~\ref{lower}, and put $F_Y=F\circ\mu$. 
For a constructible function $\varphi=\sum_\beta a_\beta1_{Y_\beta}$, define
\[
(\mu_*\varphi)(x)
=\sum_\beta a_\beta\chi_c(Y_\beta\cap\mu^{-1}(x)).
\]
Additivity of $\chi_c$ makes this independent of the chosen stratification. Extend $\mu_*$ to $\mathcal F_A(Y)$ by acting on each coefficient.

\begin{theorem}\label{lem:mono:proper}
There is a natural quasi-isomorphism
\begin{equation}\label{eq:mono:proper-base-change}
R\mu_*\psi_{F_Y}(\Cring_Y)\simeq\psi_F(\Cring_X).
\end{equation}
Moreover,
\begin{equation}\label{eq:mono:proper}
\mu_*\chi_{\mathrm{st}}^A(\psi_{F_Y}(\Cring_Y))
=\chi_{\mathrm{st}}^A(\psi_F(\Cring_X))
\qquad\text{in }\mathcal F_A(X).
\end{equation}
\end{theorem}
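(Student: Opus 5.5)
The plan has two parts. First, prove \eqref{eq:mono:proper-base-change} directly from the definition of $\psi_F$ by proper base change. Then deduce \eqref{eq:mono:proper} from the compatibility of stalk Euler characteristics with proper pushforward, applied one height-one localization at a time.

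\emph{Step 1: the quasi-isomorphism.} Write $i_Y:\mu^{-1}D\hookrightarrow Y$ and $j':U\hookrightarrow Y$, so that $\mu\circ j'=j$ and $\mu$ is an isomorphism over $U$. Since $F_Y|_U=F|_U$, we have
\[
j'^{-1}\Cring_Y\otimes(F_Y|_U)^{-1}\mathcal L_A=j^{-1}\Cring_X\otimes(F|_U)^{-1}\mathcal L_A=:\mathcal G.
\]
Then $R\mu_*Rj'_*\mathcal G=Rj_*\mathcal G$. Let $\mu_D:\mu^{-1}D\to D$ be the restriction of $\mu$; it is proper. Proper base change gives
\[
R\mu_{D*}\,i_Y^{-1}\simeq i^{-1}R\mu_*.
\]
Composing these and using $\mu\circ i_Y=i\circ\mu_D$ yields
\[
R\mu_*\,i_{Y*}i_Y^{-1}Rj'_*\mathcal G\simeq i_*i^{-1}Rj_*\mathcal G,
\]
which is \eqref{eq:mono:proper-base-change}. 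The map is natural because it is built from adjunction morphisms. All identifications are $A$-linear, since $A$ acts through $\mathcal L_A$ and every functor involved is $A$-linear.

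\emph{Step 2: the Euler characteristic identity.} Fix a height-one prime $\mathfrak q\subset A$. Localization $-\otimes_AA_{\mathfrak q}$ is exact, and it commutes with $R\mu_*$ on constructible $A$-complexes. Indeed, $\mu$ is proper, so $R\mu_*=R\mu_!$ has finite cohomological dimension, and $A_{\mathfrak q}$ is flat over $A$. Hence
\[
R\mu_*\bigl(\psi_{F_Y}(\Cring_Y)\otimes_AA_{\mathfrak q}\bigr)\simeq\psi_F(\Cring_X)\otimes_AA_{\mathfrak q}.
\]
This is an $A_{\mathfrak q}$-complex whose stalk cohomology is finite length over the DVR $A_{\mathfrak q}$. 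By proper base change again, the stalk at $x$ is $R\Gamma(\mu^{-1}(x),-)$ of the upstairs complex. So the claim reduces to the following statement: for a constructible complex $\mathcal K$ of $A_{\mathfrak q}$-modules on a compact algebraic set $Z=\mu^{-1}(x)$ whose stalks have finite-length cohomology, one has
\[
\sum_j(-1)^j\ell\bigl(H^j(Z,\mathcal K)\bigr)=\sum_\beta\chi_c(Z_\beta)\,\chi_\ell(\mathcal K_{y_\beta}),
\]
where the $Z_\beta$ form a stratification on which $\mathcal K$ has locally constant cohomology sheaves.

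To prove this, stratify $Z$ so that the cohomology sheaves of $\mathcal K$ are locally constant on each stratum. Additivity of compactly supported cohomology along open–closed decompositions reduces to a single stratum. On a single stratum, apply the length-valued analogue of the formula $\chi_c(L)=\operatorname{rank}(L)\chi_c$ for a local system $L$. Length is an additive function on finite-length $A_{\mathfrak q}$-modules, so the standard argument goes through: filter by the cohomology sheaves, then for a local system of finite-length modules take a composition series by simple local systems. Each simple factor has stalk $A_{\mathfrak q}/\mathfrak q$ and contributes $\chi_c$ of the stratum. Alternatively, one can filter by $\mathfrak q^kA_{\mathfrak q}$ and reduce to local systems of vector spaces over the residue field, where the classical statement applies. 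Summing over $\mathfrak q$ then gives \eqref{eq:mono:proper} coefficientwise, and both sides vanish for $\mathfrak q\notin\mathfrak P_F\cup\mathfrak P_{F_Y}$.

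The main obstacle is this length-valued multiplicativity of $\chi_c$ for local systems of finite-length modules on a stratum; Step 1 is formal. I would handle it via the composition-series (d\'evissage) argument, using that length is additive on exact sequences and that the simple objects are local systems of $\kappa(\mathfrak q)$-vector spaces.
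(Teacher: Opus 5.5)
Your proposal is correct and follows essentially the same route as the paper. You prove \eqref{eq:mono:proper-base-change} by proper base change. You then compute stalks via $R\Gamma(\mu^{-1}(x),\psi_{F_Y}(\Cring_Y)|_{\mu^{-1}(x)})$, using exactness of localization at $\mathfrak q$ and additivity of $A_{\mathfrak q}$-length. The only difference is that the paper evaluates the Euler characteristic by triangulating $\mu^{-1}(x)$ so that the cohomology sheaves are constant on open simplices, whereas you use locally constant strata and d\'evissage of local systems.

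One small correction: the simple factors in your composition series are local systems of $\kappa(\mathfrak q)$-vector spaces. They may have rank greater than one, so their stalks need not be $A_{\mathfrak q}/\mathfrak q$. The identity $\chi_c(L)=\operatorname{rank}(L)\,\chi_c$ still covers them, as does your alternative $\mathfrak q^k$-filtration, so the argument stands.
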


\begin{proof}
Let $i':\mu^{-1}D\hookrightarrow Y$ and $j':U\hookrightarrow Y$. Proper base change gives
\[
R\mu_*i'_*i'^{-1}\simeq i_*i^{-1}R\mu_*.
\]
Using $\mu\circ j'=j$ and $F_Y|_U=F|_U$, we obtain
\[
\begin{aligned}
R\mu_*\psi_{F_Y}(\Cring_Y)
&\simeq R\mu_*i'_*i'^{-1}Rj'_*(F|_U)^{-1}\mathcal L_A\\
&\simeq i_*i^{-1}R\mu_*Rj'_*(F|_U)^{-1}\mathcal L_A\\
&\simeq i_*i^{-1}Rj_*(F|_U)^{-1}\mathcal L_A
\simeq\psi_F(\Cring_X).
\end{aligned}
\]

For the second assertion, fix a height-one prime $\mathfrak q\subset A$. Choose a finite algebraic stratification $Y=\bigsqcup_\beta Y_\beta$ adapted to the cohomology sheaves of $\psi_{F_Y}(\Cring_Y)$, and let $y_\beta\in Y_\beta$. Put
\[
\varphi^Y_{\mathfrak q}(y)
=\sum_j(-1)^j\ell_{A_{\mathfrak q}}
 \bigl(\mathcal H^j(\psi_{F_Y}(\Cring_Y))_y\otimes_A A_{\mathfrak q}\bigr).
\]
Let $T=\mu^{-1}(x)$. Since $\mu$ is proper, the stalk formula and~\eqref{eq:mono:proper-base-change} give
\[
(\psi_F(\Cring_X))_x\simeq R\Gamma_c(T,\psi_{F_Y}(\Cring_Y)|_T).
\]
Triangulate $T$ compatibly with the strata and the constructible cohomology sheaves; see \cite{Hironaka1975}. On each open simplex these sheaves are constant. Exactness of localization and additivity of length over the discrete valuation ring $A_{\mathfrak q}$ therefore give
\[
\begin{aligned}
\varphi_{\mathfrak q}(x)
&=\sum_{p,j}(-1)^{p+j}\ell_{A_{\mathfrak q}}
 \bigl(H_c^p(T,\mathcal H^j(\psi_{F_Y}(\Cring_Y))|_T)
                          \otimes_A A_{\mathfrak q}\bigr)\\
&=\sum_\beta\chi_c(T\cap Y_\beta)
 \sum_j(-1)^j\ell_{A_{\mathfrak q}}
 \bigl(\mathcal H^j(\psi_{F_Y}(\Cring_Y))_{y_\beta}
                          \otimes_A A_{\mathfrak q}\bigr)\\
&=\sum_\beta\chi_c(T\cap Y_\beta)\varphi^Y_{\mathfrak q}(y_\beta)
=(\mu_*\varphi^Y_{\mathfrak q})(x).
\end{aligned}
\]
Only finitely many degrees and strata contribute. Summing over the height-one primes with nonzero coefficients proves~\eqref{eq:mono:proper}.
\end{proof}

\subsection{The normal-crossings calculation}\label{sec:mono-nc}
For $E\in\mathcal E$, set
\[
N_E=(N_{E,1},\ldots,N_{E,r}),\qquad
P_E(\tvect)=\tvect^{N_E}-1,\qquad
E^\circ=E\setminus\bigcup_{E'\neq E}E'.
\]
For $I\subseteq\mathcal E$, write $E_I=\bigcap_{E\in I}E$. Empty intersections contribute zero to the sums below. If $E_I$ is disconnected, $[T^*_{E_I}Y]$ denotes the sum over its connected components.

\begin{theorem}[Normal-crossings computation]\label{thm:mono:normal-crossings}
Let $\Theta_{F_Y}$ and $\Lambda^Y_{\mathfrak p}$ be the coefficient cycles on $Y$ defined as in~\eqref{eq:mono:cycle} and~\eqref{eq:mono:theta}. Then
\begin{equation}\label{eq:mono:nc-theta}
\Theta_{F_Y}
=\sum_{\varnothing\neq I\subseteq\mathcal E}
 [T^*_{E_I}Y]\otimes\left(\sum_{E\in I}\operatorname{div}(P_E)\right).
\end{equation}
More precisely, for $\mathfrak q\in\mathfrak P_{F_Y}$,
\begin{equation}\label{eq:mono:nc-lambda}
\Lambda^Y_{\mathfrak p(\mathfrak q)}
=\sum_{\varnothing\neq I\subseteq\mathcal E}
 \left(\sum_{E\in I}\operatorname{ord}_{\mathfrak q}(P_E)\right)[T^*_{E_I}Y],
\end{equation}
where $\operatorname{ord}_{\mathfrak q}(P_E)\in\{0,1\}$. Consequently,
\begin{equation}\label{eq:mono:nc-chi}
\chi_{\mathrm{st}}^A(\psi_{F_Y}(\Cring_Y))
=-\sum_{E\in\mathcal E}
 1_{E^\circ}\otimes\operatorname{div}(P_E).
\end{equation}
\end{theorem}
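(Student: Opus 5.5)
The plan is to prove \eqref{eq:mono:nc-lambda} by a local computation in normal-crossings coordinates. Then \eqref{eq:mono:nc-theta} follows by summing over $\mathfrak q$, and \eqref{eq:mono:nc-chi} follows from Theorem~\ref{thm:mono:comparison} applied on $Y$ together with the Euler-obstruction calculus.

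\textbf{Local reduction.} Since $\CCrel$ is local on $Y$, we work near a point where $F_Y=U\cdot y^{N}$ with $U$ a tuple of units and only the divisors $E_1,\dots,E_p$ through the point. The automorphism $\Phi$ from the proof of Proposition~\ref{prop:local-normal-crossing} removes the units. This twist preserves the relative order filtration up to a filtered isomorphism, so $\CCrel$ is unchanged. We may therefore assume $F_Y^{\svect}=\prod_{e=1}^p y_e^{L_e(\svect)}$. Then
\[
j'_*=\bigotimes_e \Cring[y_e^{\pm1}]y_e^{L_e(\svect)}\boxtimes\mathcal O,
\]
localized at $\mathfrak p$, and similarly for $j'_!$. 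By Theorems~\ref{thm:maximal-extension} and~\ref{thm:minimal-extension}, these are the lattices $\Dmod F_Y^{\svect\mp k\one}$, which factor as external products of one-variable modules $\Dmod_{y_e}[\svect]_{\mathfrak p}\,y_e^{L_e(\svect)\mp kN_e}$.

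\textbf{One-variable pieces.} For a single coordinate with $L_e(\svect)\notin$ (integer translates inside $\mathfrak p$), the factor $y^{L_e}$ generates the same module for all shifts after localization. The computation in Proposition~\ref{prop:local-normal-crossing} shows that the factors $L_e+h$ are units, so $j_*=j_!$. If instead $L_e(\svect)+c\in\mathfrak p$ for some integer $c$, then $\mathfrak p=\mathfrak p(\mathfrak q)$ with $\mathfrak q\supseteq (P_e)$, meaning $\operatorname{ord}_{\mathfrak q}P_e=1$ because $\tvect^{N_e}-1$ is squarefree. In that case the quotient $j_*/j_!$ in the $y_e$-variable is a module of length one over $R_{\mathfrak p}$, supported on $y_e=0$ with conormal characteristic cycle $[T^*_{0}\mathbb C]$, while $j_*$ and $j_!$ each have cycle $[T^*_{\mathbb C}\mathbb C]+[T^*_0\mathbb C]$ with multiplicity one over the generic point.

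\textbf{Assembling the pieces.} Filter $j_*/j_!$ for the product by the standard cube filtration: $j_*/j_!$ of an external tensor product has successive quotients $\boxtimes_{e\in I}(\text{torsion piece})\boxtimes_{e\notin I}(j_!\text{ pieces})$. Using $\CCrel$ additivity in pure dimension (the remark after \eqref{eq:relative-localization}) and length over $R_{\mathfrak p}$, the multiplicity of $[T^*_{E_J}Y]$ comes out as the number of $e\in J$ with $\operatorname{ord}_{\mathfrak q}P_e=1$. This uses the fact that the $j_!$ factor contributes $[T^*_{y_e=0}]$ as well as the zero section, which is exactly what produces $\sum_{E\in J}\operatorname{ord}_{\mathfrak q}(P_E)$. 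This inclusion–exclusion bookkeeping is the main obstacle. I would first check it directly for $p=2$ and then induct on $p$. The pieces glue to \eqref{eq:mono:nc-lambda}.

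\textbf{Deducing \eqref{eq:mono:nc-chi}.} Apply Theorem~\ref{thm:mono:comparison} on $Y$: $\chi^A_{\mathrm{st}}=(-1)^n\CC_A^{-1}(\Theta_{F_Y})$. Since each $E_I$ is smooth, we have $\operatorname{Eu}_{E_I}=1_{E_I}$, so $\CC^{-1}[T^*_{E_I}Y]=(-1)^{n-|I|}1_{E_I}$. For fixed $E$, summing over the sets $I\ni E$ gives
\[
(-1)^n\sum_{I\ni E}(-1)^{n-|I|}1_{E_I}=-1_{E^\circ},
\]
by inclusion–exclusion, since $\sum_{J}(-1)^{|J|}1_{E_{\{E\}\cup J}}$ over $J\subseteq\mathcal E\setminus\{E\}$ equals $1_{E^\circ}$.
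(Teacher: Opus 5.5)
Your local reduction (removing the units with $\Phi$), the one-variable analysis, and the derivation of \eqref{eq:mono:nc-chi} from Theorem~\ref{thm:mono:comparison} by inclusion--exclusion all agree with the paper. The gap is the assembly step, and it is more than bookkeeping: the cube filtration does not have the layers you state, and using them gives the wrong multiplicities.

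\textbf{Why the cube fails.} Write $A_e\supseteq B_e$ for the one-variable maximal and minimal lattices in $y_e$, and put $T_e=A_e/B_e$. Over the discrete valuation ring $R_{\mathfrak p}$, the modules $A_e$ and $B_e$ are torsion-free, hence flat. A nonzero $T_e$, however, is killed by the uniformizer, so $\operatorname{Tor}_1^{R_{\mathfrak p}}(T_1,T_2)\cong T_1\otimes_K T_2\neq 0$. Concretely, inside $A_1\boxtimes A_2$ one has $(B_1\boxtimes A_2)\cap(A_1\boxtimes B_2)\supsetneq B_1\boxtimes B_2$, with quotient $T_1\boxtimes_K T_2$. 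So the middle layer of the cube is not $(T_1\boxtimes B_2)\oplus(B_1\boxtimes T_2)$.

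\textbf{The resulting miscount.} Suppose you take the layers $\boxtimes_{e\in I}T_e\boxtimes_{e\notin I}B_e$ at face value. The layer indexed by $I$ contributes to $[T^*_{E_J}Y]$ exactly when $I\subseteq J$. Let $m$ be the number of $E\in J$ with $\operatorname{ord}_{\mathfrak q}(P_E)=1$. Then the coefficient of $[T^*_{E_J}Y]$ comes out as $2^m-1$ instead of $m$. For $f=y_1y_2$ on $\Cring^2$ and $\mathfrak p=(s+1)$, you would get $3[T^*_{0}\Cring^2]$ instead of $2[T^*_{0}\Cring^2]$. Then \eqref{eq:mono:nc-chi} would fail at the origin: the cube count gives the value $1$ there, while the correct value is $0$.

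\textbf{The fix.} Use a chain instead of a cube. Since every $A_e$ and $B_e$ is $R_{\mathfrak p}$-flat, the chain
\[
A_1\boxtimes\cdots\boxtimes A_p\supseteq A_1\boxtimes\cdots\boxtimes A_{p-1}\boxtimes B_p\supseteq\cdots\supseteq B_1\boxtimes\cdots\boxtimes B_p
\]
consists of honest inclusions. Its $e$-th layer is exactly
\[
A_{<e}\boxtimes T_e\boxtimes B_{>e}=(A_{<e}\otimes K)\boxtimes_K T_e\boxtimes_K(B_{>e}\otimes K).
\]
What enters here is the cycle of the special fibres $A_{e'}\otimes K$ and $B_{e'}\otimes K$, not the cycle over the generic point. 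Each of these fibres is $\Dmod_{y_{e'}}\otimes K/(y_{e'}\partial_{y_{e'}}-\lambda_{e'})$, where $\lambda_{e'}$ is the image of the exponent in $K$. Its cycle is the zero section plus the conormal bundle of $y_{e'}=0$.

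Hence the $e$-th layer has cycle $\sum_{J\ni e}[T^*_{E_J}]\times\{\mathfrak p\}$. All layers are $\mathfrak p$-torsion with characteristic varieties of the same pure dimension, so additivity applies and gives the coefficient $\sum_{E\in J}\operatorname{ord}_{\mathfrak q}(P_E)$.

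This is the paper's argument in a slightly coarser form. The paper filters $\Mmod^{\mathbf b^-}/\Mmod^{\mathbf b^+}$ along a monotone lattice path of unit steps. It computes each quotient $\Mmod^{\mathbf b}/\Mmod^{\mathbf b+\mathbf e_a}$ from its cyclic presentation, which yields \eqref{eq:mono:coordinate-cycle} directly and makes it immediate that exactly one step contributes for each $a$ with $\operatorname{ord}_{\mathfrak q}(P_{E_a})=1$.
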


\begin{proof}
We compute the contribution of each prime divisor and then apply the local index comparison.

\smallskip\noindent\emph{The parameter divisors.}
Write $N_E=dL$ with $d\geq1$ and $L$ primitive. Then
\[
P_E(\tvect)=\prod_{\xi^d=1}(\tvect^L-\xi).
\]
Thus every prime factor of $P_E$ occurs with multiplicity one.
The calculation of $B^{\mathbf 1}(F_Y)$ and Theorem~\ref{thm:support-Bernstein-Sato}, applied locally on $Y$, give
\[
S(F_Y)=\bigcup_{E\in\mathcal E}V(P_E).
\]
For a paired representative $\mathfrak p=\mathfrak p(\mathfrak q)$ as in~\eqref{eq:mono:paired},
\begin{equation}\label{eq:mono:nc-resonance}
\operatorname{ord}_{\mathfrak q}(P_E)=1
\quad\Longleftrightarrow\quad
L_E(\svect)+h\in\mathfrak p\text{ for a unique }h\in\mathbb Z.
\end{equation}
Indeed, if $\mathfrak p=(L\cdot\svect+\alpha)$ and $N_E=dL$, both conditions say that $d\alpha\in\mathbb Z$, and $h=d\alpha$. If the normal vectors are not proportional, neither condition holds.

\smallskip\noindent\emph{A single coordinate shift.}
Work on a coordinate neighborhood $V\subseteq Y$ where the divisor components are $E_a=(y_a=0)$, $1\leq a\leq k$, and
\[
f_i\circ\mu=u_i\prod_{a=1}^k y_a^{N_{E_a,i}}.
\]

By the same argument as in the proof of
Proposition~\ref{prop:local-normal-crossing}, we may omit
the unit factors $u_i$ when computing the relative
characteristic cycles.

For $\mathbf b=(b_1,\ldots,b_k)\in\mathbb Z^k$, set
\[
\Mmod^{\mathbf b}
=\Dmod_V[\svect]_{\mathfrak p}
 \left(\prod_{a=1}^k y_a^{b_a}F_Y^{\svect}\right).
\]
Writing $\mathscr D=\Dmod_V[\svect]_{\mathfrak p}$, the coordinatewise cyclic presentation is
\[
\Mmod^{\mathbf b}\simeq
\frac{\mathscr D}
 {\displaystyle\sum_{j=1}^k\mathscr D(y_j\partial_{y_j}-L_{E_j}(\svect)-b_j)
  +\sum_{c=k+1}^n\mathscr D\partial_{y_c}}.
\]
The quotient $\Mmod^{\mathbf b}/\Mmod^{\mathbf b+\mathbf e_a}$ is obtained by adding the relation $y_a=0$. Since $\partial_{y_a}y_a=y_a\partial_{y_a}+1$, it is annihilated by $L_{E_a}(\svect)+b_a+1$. It vanishes if this linear form is not in $\mathfrak p$. Otherwise the linear form generates $\mathfrak pR_{\mathfrak p}$, and the quotient is a module over
\[
K=R_{\mathfrak p}/\mathfrak pR_{\mathfrak p}
=\operatorname{Frac}(R/\mathfrak p).
\]
The tensor product of the one-coordinate order filtrations gives
\[
\gr^{\mathrm{rel}}
 \bigl(\Mmod^{\mathbf b}/\Mmod^{\mathbf b+\mathbf e_a}\bigr)
\simeq
\frac{(\mathcal O_V\otimes_{\Cring}K)[\xi_1,\ldots,\xi_n]}
 {(y_a,\ y_j\xi_j\ (1\leq j\leq k,\ j\neq a),\ \xi_c\ (c>k))}.
\]
The ideal in the denominator is the intersection
\[
\bigcap_{\substack{J\subseteq\{1,\ldots,k\}\\a\in J}}
 (y_j\ (j\in J),\ \xi_\ell\ (\ell\notin J)).
\]
Each component is a conormal bundle and is generically reduced. Hence, when the quotient is nonzero, its characteristic cycle is
\begin{equation}\label{eq:mono:coordinate-cycle}
\CCrel\bigl(\Mmod^{\mathbf b}/\Mmod^{\mathbf b+\mathbf e_a}\bigr)
=\sum_{\substack{J\subseteq\{1,\ldots,k\}\\a\in J}}
 [T^*_{\bigcap_{j\in J}(E_j\cap V)}V\times\{\mathfrak p\}].
\end{equation}

\smallskip\noindent\emph{The stabilized quotient.}
For $m\gg0$, Theorems~\ref{thm:maximal-extension} and~\ref{thm:minimal-extension} identify
\[
\Psi_{F_Y,\mathfrak p}(\mathcal O_Y)|_V
\simeq\Mmod^{\mathbf b^-}/\Mmod^{\mathbf b^+},
\qquad b_a^\pm=\pm mL_{E_a}(\one).
\]
Choose a monotone path from $\mathbf b^-$ to $\mathbf b^+$, increasing one coordinate by one at each step. It gives a filtration whose successive quotients are those in~\eqref{eq:mono:coordinate-cycle}. For fixed $a$, condition~\eqref{eq:mono:nc-resonance} shows that exactly one step contributes if $\operatorname{ord}_{\mathfrak q}(P_{E_a})=1$, and no step contributes otherwise. Additivity of the characteristic cycles gives~\eqref{eq:mono:nc-lambda} locally. These local calculations agree on overlaps, so the identity holds on $Y$. Summing over $\mathfrak q$ proves~\eqref{eq:mono:nc-theta}.

\smallskip\noindent\emph{The constructible function.}
Every nonempty $E_I$ is smooth of dimension $n-|I|$, so~\eqref{eq:mono:cc-map} and inclusion--exclusion give
\[
\begin{aligned}
\CC_A^{-1}(\Theta_{F_Y})
&=\sum_{\varnothing\neq I\subseteq\mathcal E}
 (-1)^{n-|I|}1_{E_I}\otimes
 \left(\sum_{E\in I}\operatorname{div}(P_E)\right)\\
&=(-1)^{n-1}\sum_{E\in\mathcal E}
 \left(\sum_{I\ni E}(-1)^{|I|-1}1_{E_I}\right)
 \otimes\operatorname{div}(P_E)\\
&=(-1)^{n-1}\sum_{E\in\mathcal E}
 1_{E^\circ}\otimes\operatorname{div}(P_E).
\end{aligned}
\]
Theorem~\ref{thm:mono:comparison}, applied locally on $Y$, proves~\eqref{eq:mono:nc-chi}.
\end{proof}

\subsection{The multivariable A'Campo formula}

\begin{theorem}[Multivariable A'Campo formula]\label{thm:mono:acampo}
For every $x\in X$,
\begin{equation}\label{eq:mono:acampo-divisor}
\operatorname{div}\bigl(\zeta_{F,x}^{\mathrm{mon}}\bigr)
=-\sum_{E\in\mathcal E}
 \chi_c(E^\circ\cap\mu^{-1}(x))\operatorname{div}(P_E).
\end{equation}
Equivalently,
\begin{equation}\label{eq:mono:acampo-product}
\zeta_{F,x}^{\mathrm{mon}}(\tvect)
\doteq\prod_{E\in\mathcal E}
 \bigl(\tvect^{N_E}-1\bigr)^{-\chi_c(E^\circ\cap\mu^{-1}(x))},
\end{equation}
where $\doteq$ denotes equality up to a unit of $A$. The resulting divisor is independent of the log resolution.
\end{theorem}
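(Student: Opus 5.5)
The plan is to combine the proper pushforward identity of Theorem~\ref{lem:mono:proper} with the normal-crossings computation of Theorem~\ref{thm:mono:normal-crossings}. We then evaluate at $x$ and read off the divisor.

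First, by \eqref{eq:mono:proper}, we have
\[
\chi_{\mathrm{st}}^A(\psi_F(\Cring_X))=\mu_*\chi_{\mathrm{st}}^A(\psi_{F_Y}(\Cring_Y)).
\]
By \eqref{eq:mono:nc-chi}, the right-hand side equals $-\sum_{E\in\mathcal E}\mu_*(1_{E^\circ})\otimes\operatorname{div}(P_E)$. The pushforward acts coefficientwise, and the sum over $E$ is finite, so this rewriting is legitimate.

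Second, we compute $\mu_*(1_{E^\circ})(x)=\chi_c(E^\circ\cap\mu^{-1}(x))$ directly from the definition of $\mu_*$ on constructible functions. The stratification used there can be taken to refine the locally closed sets $E^\circ$, and additivity of $\chi_c$ makes the value independent of that choice. Evaluating at $x$ and comparing with the definition \eqref{eq:mono:zeta} of $\zeta_{F,x}^{\mathrm{mon}}$ gives \eqref{eq:mono:acampo-divisor}.

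One point needs care here. The formula \eqref{eq:mono:nc-chi} is written as a combination of the divisors $\operatorname{div}(P_E)$ rather than as a sum over $\mathfrak P_{F_Y}$. So we must check that expanding $\operatorname{div}(P_E)=\sum_{\mathfrak q}\operatorname{ord}_{\mathfrak q}(P_E)[V(\mathfrak q)]$ matches the definition \eqref{eq:mono:chi}. This holds because the prime divisors with nonzero coefficient lie in $S(F)$, and primes outside $\mathfrak P_F$ have $\varphi_{\mathfrak q}=0$. If some $\operatorname{div}(P_E)$ component contributed to $\chi_{\mathrm{st}}^A(\psi_F)$ but not to $\mathfrak P_F$, it would contradict Theorem~\ref{thm:support-Bernstein-Sato}. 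Hence both sides live in $\mathcal F_A(X)$ with the same indexing, and the comparison is coefficientwise in $[V(\mathfrak q)]$.

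Third, \eqref{eq:mono:acampo-product} follows because $A$ is a UFD with units $\Cring^\times\tvect^{\mathbb Z}$. A rational function is determined up to a unit by its divisor, and $\operatorname{div}\bigl(\prod_E P_E^{-\chi_c(\cdots)}\bigr)$ is the right side of \eqref{eq:mono:acampo-divisor}.

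Independence of the resolution is then automatic. The left side $\chi_{\mathrm{st}}^A(\psi_F(\Cring_X))(x)$ is defined intrinsically from $\psi_F(\Cring_X)$ on $X$, so the right side takes the same value for any log resolution.

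The main obstacle is the bookkeeping in the first two steps, namely that \eqref{eq:mono:nc-chi} legitimately expresses a class in $\mathcal F_A(Y)$ with the primes $\mathfrak q$ correctly matched. The rest is formal.
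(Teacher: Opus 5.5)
Your proposal is correct and follows the paper's proof: combine \eqref{eq:mono:proper} with \eqref{eq:mono:nc-chi}, use $(\mu_*1_{E^\circ})(x)=\chi_c(E^\circ\cap\mu^{-1}(x))$, evaluate at $x$, and pass from divisors to rational functions by unique factorization in $A$. The indexing check you flag is harmless but not actually needed. Both identities already hold in $\mathcal F_A(X)=\mathcal F(X)\otimes_{\mathbb Z}\Div(A)$, where $\Div(A)$ is free on all prime divisors and $\varphi_{\mathfrak q}=0$ for $\mathfrak q\notin\mathfrak P_F$.
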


\begin{proof}
Theorems~\ref{lem:mono:proper} and~\ref{thm:mono:normal-crossings} give
\[
\chi_{\mathrm{st}}^A(\psi_F(\Cring_X))
=-\sum_{E\in\mathcal E}
 (\mu_*1_{E^\circ})\otimes\operatorname{div}(P_E).
\]
By definition,
\[
(\mu_*1_{E^\circ})(x)=\chi_c(E^\circ\cap\mu^{-1}(x)).
\]
Evaluating at $x$ and using~\eqref{eq:mono:zeta} proves~\eqref{eq:mono:acampo-divisor}; factorization gives~\eqref{eq:mono:acampo-product}. Independence follows from the intrinsic definition of the left-hand side.
\end{proof}

\bibliographystyle{amsplain}
\bibliography{references}
\end{document}